\documentclass[11pt,a4paper,leqno]{amsart}
\usepackage[latin1]{inputenc}
\usepackage[T1]{fontenc}
\usepackage{amsfonts}
\usepackage{amsmath}
\usepackage{amssymb}
\usepackage{eurosym}
\usepackage{mathrsfs}
\usepackage{palatino}
\usepackage{pxfonts}
\usepackage{color}
\usepackage{esint}
\usepackage{url}
\usepackage{verbatim}
\usepackage{mathtools}
\mathtoolsset{showonlyrefs}

\usepackage{enumerate}

\usepackage[pagebackref,hypertexnames=false, colorlinks, citecolor=blue, linkcolor=blue, urlcolor=red]{hyperref}

\newcommand{\C}{\mathbb{C}}
\newcommand{\Z}{\mathbb{Z}}

\newcommand{\N}{\mathbb{N}}
\newcommand{\bbP}{\mathbb{P}}

\newcommand{\R}{\mathbb{R}}

\newcommand{\E}{\mathbb{E}}

\newcommand{\calA}{\mathcal{A}}

\newcommand{\calJ}{\mathcal{J}}

\newcommand{\calL}{\mathcal{L}}
\newcommand{\calM}{\mathcal{M}}

\newcommand{\calS}{\mathcal{S}}

\newcommand{\calV}{\mathcal{V}}

\newcommand{\WBP}{\operatorname{WBP}}
\newcommand{\Leb}{\operatorname{Leb}}

\newcommand{\RMF}{\operatorname{RMF}}

\newcommand{\si}{\sigma}

\newcommand{\bla}{\big \langle}
\newcommand{\bra}{\big \rangle}

\newcommand{\ud}[0]{\,\mathrm{d}}



\newcommand{\id}[0]{\operatorname{id}}
\newcommand{\BMO}[0]{\operatorname{BMO}}
\newcommand{\supp}[0]{\operatorname{spt}}
\newcommand{\loc}[0]{\operatorname{loc}}



\newcommand{\eps}[0]{\varepsilon}

\newcommand{\Rad}[0]{\operatorname{Rad}}
\newcommand{\UMD}{\operatorname{UMD}}


\newcommand{\ch}[0]{\operatorname{ch}}

\newcommand{\calD}[0]{\mathcal{D}}

\newcommand{\wt}[1]{{\widetilde{#1}}}

\swapnumbers
\theoremstyle{plain}
\newtheorem{thm}[equation]{Theorem}
\newtheorem{lem}[equation]{Lemma}
\newtheorem{prop}[equation]{Proposition}
\newtheorem{cor}[equation]{Corollary}

\theoremstyle{definition}
\newtheorem{defn}[equation]{Definition}

\newtheorem{exmp}[equation]{Example}

\theoremstyle{remark}
\newtheorem{rem}[equation]{Remark}

\numberwithin{equation}{section}

\pagestyle{headings}

\addtolength{\hoffset}{-1.15cm}
\addtolength{\textwidth}{2.3cm}
\addtolength{\voffset}{0.45cm}
\addtolength{\textheight}{-0.9cm}

\setcounter{tocdepth}{1}

\author{Francesco Di Plinio} \address[F.D.P.]{Department of Mathematics, Washington University in St. Louis, One Brookings Drive, \newline \indent St. Louis,  MO 63130-4899, USA}
\email{francesco.diplinio@wustl.edu}

 \author{Kangwei Li}
\address[Kangwei Li]
{Center for Applied Mathematics, Tianjin University, Weijin Road 92, 300072 Tianjin,\newline \indent China}
	\address{and}
\address{
BCAM, Basque Center for Applied Mathematics, Mazarredo 14, 48009
	Bilbao, Basque Country,\newline \indent   Spain}
\email{kli@tju.edu.cn}

\author{Henri Martikainen}
\address[H.M.]{Department of Mathematics and Statistics, University of Helsinki, P.O.B. 68, \newline \indent FI-00014 University of Helsinki, Finland}
\email{henri.martikainen@helsinki.fi}

\author{Emil Vuorinen}
\address[E.V.]{Centre for Mathematical Sciences, University of Lund, P.O.B. 118, 22100 Lund, Sweden}
\email{j.e.vuorin@gmail.com}

\title[Multilinear singular integrals on non-commutative $L^p$ spaces]{Multilinear singular integrals on non-commutative $L^p$ spaces}

\makeatletter
\@namedef{subjclassname@2010}{%
  \textup{2010} Mathematics Subject Classification}
\makeatother

\subjclass[2010]{42B20 (primary), 46E40, 46L52 (secondary)}
\keywords{Calder\'on--Zygmund operators, singular integrals, multilinear analysis, non-commutative spaces, representation theorems, UMD spaces} 
\thanks{ F. Di Plinio has been
 partially supported by the National Science Foundation under the grants   NSF-DMS-1650810,  NSF-DMS-1800628 and NSF-DMS-2000510.
\\
  K. Li  was supported by Juan de la Cierva - Formaci\'on 2015 FJCI-2015-24547, by the Basque Government through the BERC
2018-2021 program and by Spanish Ministry of Economy and Competitiveness
MINECO through BCAM Severo Ochoa excellence accreditation SEV-2017-0718
and through project MTM2017-82160-C2-1-P funded by (AEI/FEDER, UE) and
acronym ``HAQMEC''.
\\
H. Martikainen was supported by the Academy of Finland through the grants 294840 and 306901, and by the three-year research grant 75160010 of the University of Helsinki.
He is a member of the Finnish Centre of Excellence in Analysis and Dynamics Research.
\\
E. Vuorinen was supported by the Academy of Finland through the grant 306901, by the Finnish Centre of Excellence in Analysis and Dynamics Research, and by
Jenny and Antti Wihuri Foundation.}

\begin{document}

\begin{abstract}
We  prove $L^p$ bounds for   the extensions of standard multilinear Calder\'on-Zygmund operators   to tuples of $\UMD$ spaces tied by  a natural product structure. The product can, for instance, mean the pointwise product in $\UMD$ function lattices, or the composition of operators in the  Schatten-von Neumann  subclass of the algebra of bounded operators on a Hilbert space. We do not require additional assumptions beyond $\UMD$ on each space -- in contrast to previous results, we e.g. show that the Rademacher maximal function property is not necessary. The obtained generality allows for novel applications. For instance, we prove new versions of fractional Leibniz rules via our results concerning the boundedness of multilinear singular integrals in non-commutative $L^p$ spaces.
Our proof techniques combine a novel  scheme of induction on the multilinearity index with dyadic-probabilistic techniques in the $\UMD$ space setting. 
\end{abstract}

\maketitle

\section{Introduction}
A Banach space  $X$ has the $\UMD$ property if any $X$-valued martingale difference sequence converges unconditionally in $L^p$ for some (equivalently, all) $p\in (1,\infty)$. Standard examples of $\UMD$ spaces are provided by the reflexive $L^p$ function spaces, as well as the reflexive Schatten-von Neumann subclasses $S^p$ of the algebra of bounded operators on a  Hilbert space.
The works by Burkholder \cite{Burk1} and Bourgain \cite{Bou} yield an alternative characterization: $X$ is a $\UMD$ space if and only  if singular integrals, in particular the Hilbert transform,  admit an $L^p(X)$-bounded extension. Such equivalence, albeit striking, is not so surprising when viewed from the modern dyadic-probabilistic perspective on singular integral operators. Indeed, 
Petermichl \cite{pet2000,Pet} realized that the Hilbert transform lies in the convex hull of certain dyadic operators akin to martingale transforms (the so-called dyadic shifts), while Hyt\"onen \cite{Hy} extended this representation to general singular integral operators of Calder\'on-Zygmund type, relying on a probabilistic construction. These results have roots in
the pioneering work of Figiel \cite{Fig} and on the probabilistic approach of Nazarov--Treil--Volberg to non-homogeneous $Tb$ theorems \cite{NTV}.

The theory of \emph{linear} singular integrals on Banach spaces, beyond its intrinsic interest, has historically been motivated by its interplay with several related areas, such as geometry of Banach spaces \cite{JX,KW1}, elliptic and parabolic regularity theory \cite{CH,Weis}, the theory of quasiconformal mappings \cite{GMSS}.  Furthermore, vector-valued bounds  may often be used in the pursuit of their  multi-parameter analogs \cite{HytPor08,Hyt2005}.

In this article, we are concerned with  Banach-valued extensions of \emph{multilinear} singular integral operators. 
A linear singular integral takes the general form
\begin{equation}\label{eq:SIO}
  Tf(x)=\int_{\R^d}K(x,y)f(y)\ud y,
\end{equation}
where different assumptions on the {\em kernel} $K$ lead to important classes of linear transformations arising across pure and applied analysis.
The term singular integral refers just to the underlying kernel structure -- a Calder\'on-Zygmund operator is a bounded singular
integral operator.
A heuristic model of an $n$-linear singular integral operator $T$ in $\R^d$ is then obtained by setting
$$
T(f_1,\ldots, f_n)(x) = U(f_1 \otimes \cdots \otimes f_n)(x,\ldots,x), \qquad x \in \R^d,\, f_i \colon \R^d \to \C,
$$
where $U$ is a linear singular integral operator in $\R^{nd}$. For the basic theory see e.g. Grafakos--Torres \cite{GT}.

Multilinear singular integrals arise naturally from applications to partial differential equations, complex function theory and ergodic theory, among others. Focusing on the results of greater significance for the present work, we mention that $L^p$ estimates for the fractional derivative  of a product, often referred to as \emph{fractional Leibniz rules},  are widely employed in the study of dispersive equations starting from the work of Kato and Ponce \cite{KP}, descend from the multilinear H\"ormander-Mihlin multiplier  theorem of Coifman-Meyer \cite{CM}. The bilinear Hilbert transform is a prime example of a \emph{modulation invariant} bilinear Calder\'on-Zygmund operator. It rose to prominence with Calder\'on's first commutator program, and has been featured as a model operator in the study of bilinear ergodic averages; the latter connection is expounded in e.g.\ \cite{DOP}.    Proving $L^p$ estimates for the bilinear Hilbert transform in the Lacey-Thiele framework \cite{LT1,LT2} involves a decomposition into  \emph{single trees}, which are essentially modulated bilinear Calder\'on-Zygmund operators. 

Vector-valued extensions of multilinear Calder\'on-Zygmund operators have mostly been studied within the more restrictive framework of $\ell^p$ spaces and function lattices. Boundedness of these extensions is classically obtained through weighted norm inequalities, more recently in connection with localized techniques such as sparse domination: see \cite{GM} and the more recent \cite{CUDPOULMS,LMMOV,Nieraeth} for a non-exhaustive  overview of their interplay. The paper \cite{DO}, by Y.\ Ou and one of us, contains a bilinear multiplier theorem which applies to certain non-lattice $\UMD$ spaces. The approach of \cite{DO} is  based on a localization of the $\UMD$-valued tent space norms, see for instance \cite{HNP}, within the Carleson embedding framework of Do and Thiele \cite{DoThiele}. The tent space techniques lead to the additional assumption of  $L^p$ estimates for a certain   analogue of the Hardy-Littlewood  maximal operator obtained by replacing uniform bounds with randomized, or $\mathcal R$-bounds, see e.g.\ \cite{Weis} for a definition. This assumption, usually referred to as the RMF property of  $X$, dates back  to the work of Hyt\"onen, McIntosh and Portal on the vector-valued Kato square root problem \cite{HMP}, and is in fact necessary for the $X$-valued Carleson embedding theorem to hold \cite{HK1}. 

 In this article, we obtain vector-valued extensions of multilinear singular integrals to tuples of $\UMD$ spaces tied by  a natural product structure, such as that of pointwise product in $\UMD$ function lattices or, more generally in fact, that of composition within the Schatten-von Neumann classes. We do not require additional conditions on the spaces involved -- in particular, we do not require the RMF property. Thus, we are able to extend multilinear Calrer\'on--Zygmund operators to natural tuples of non-commutative $L^p$ spaces -- a result which
 does not seem attainable via abstract theorems involving multilinear $\RMF$ type assumptions. A motivating corollary is a version of  the fractional Leibniz rule for products of  Schatten-von Neumann class-valued functions.

 In contrast to \cite{DO,HMP,HNP}, our techniques are dyadic-probabilistic:  a multilinear version of the representation theorem of Hyt\"onen \cite{Hy}, which appeared in the bilinear case in \cite{LMOV} by Y.\ Ou and three of us, reduces the problem to  the boundedness of the extensions of a class of multilinear dyadic model operators, namely \emph{paraproducts} and \emph{multilinear dyadic shifts} of arbitrary complexity. The novelty lies in how we treat these operators -- multilinearity poses significant problems in the vector-valued setup.
 
  We note that  $\UMD$-valued extensions of  bilinear, complexity zero dyadic shifts  have implicitly been treated in the work by Hyt\"onen, Lacey and Parissis  on the $\UMD$  dyadic model of the bilinear Hilbert transform \cite[Section 6]{HLP}. The simple approach of \cite{HLP} does not extend to either the higher complexity or the multilinear  cases. We tackle the $n$-linear case by inducting suitably on the linearity, which is made possible by associating to our $n$-tuples of $\UMD$ spaces a collection of  related $m$-tuples, $m<n$.
The framework is carefully designed to allow us to treat non-commutative theory. Moreover,
bilinear theory would not reveal all the difficulties and is, in fact, strictly easier -- a feature that is also present in our followup paper \cite{DLMV2} involving
operator-valued multilinear analysis.
Before providing further insights on the novelty of our proof techniques, and comparisons to previous approaches, we give the statements of our main results.

   \subsection{Main results}
We start by discussing a simpler question, where the current literature already has some restrictions that we can lift.
If $X$ is a Banach space and $T$ is an $n$-linear integral operator on $\R^d$ acting on $n$-tuples of functions in $L^\infty_c(\R^d)$, we may let $T$ act on $(L^\infty_c(\R^d) \otimes X) \times L_c^\infty(\R^d)\times \cdots \times L_c^\infty(\R^d) $ by\begin{equation}
\label{e:formal1}
\begin{split}
&T\left( f_1,f_2,\ldots, f_n\right) (x) = \sum e_{1,j} T(f_{1,j}, f_2,\ldots,f_n) (x), \qquad x\in \R^d, 
\\ &f_1= \sum   e_{1,j} f_{1,j}, \qquad f_{1,j}\in L^\infty_c(\R^d),\,e_{1,j}\in X.
\end{split}\end{equation}
A basic thing implied by our methods is that $n$-linear Calder\'on-Zygmund operators extend boundedly when applied to one $\UMD$-valued function and $n-1$ scalar functions, without any additional assumption on the $\UMD$ space. We send to Subsection \ref{sec:MultiSing} for the precise definition of an $ n$-linear Calder\'on-Zygmund operator.
This is the simplest complete multilinear analogue of Bourgain's $\UMD$ H\"ormander-Mihlin   multiplier theorem from \cite{Bou}; see also Weis \cite{Weis} and Hyt\"onen-Weis \cite{HW} for the operator-valued, non-translation invariant case.

In the bilinear, translation invariant, operator-valued  setting, a related result appeared in \cite[Corollary 1.2]{DO} under the assumption, known to be rather restrictive, that $X$ is a $\UMD$ space with the non-tangential Rademacher maximal function property \cite{HMP}. Theorem \ref{thm:X} shows, in particular,
 that the latter assumption is unnecessary. However, we formulate the following more general version to facilitate the discussion below regarding the somewhat special nature of bilinear theory.
 
 \begin{thm} \label{thm:X} Let $X_1, X_2, Y_3$ be $\UMD$ spaces with an associated product (a bounded bilinear operator)
$$
X_1 \times X_2 \to Y_3\colon (x_1, x_2) \mapsto x_1 x_2, \qquad |x_1x_2|_{Y_3} \le |x_1|_{X_1} |x_2|_{X_2}.
$$
Let $n\geq 2$ and $T$ be an $n$-linear Calder\'on-Zygmund operator on $\R^d$. The $n$-linear operator 
\begin{equation}
\begin{split}
&T\left( f_1,f_2,\ldots, f_n\right) (x) = \sum_{j_1, j_2} e_{1,j_1}e_{2,j_2} T(f_{1,j_1}, f_{2,j_2},f_3, \ldots,f_n) (x), \qquad x\in \R^d, 
\\ &f_1= \sum_{j_1}   e_{1,j_1} f_{1,j_1},\,  f_2= \sum_{j_2}   e_{2,j_2} f_{2,j_2} \qquad f_{1,j}, f_{2,j}\in L^\infty_c(\R^d),\,e_{1,j_1} \in X_1, \, e_{2,j_2} \in X_2,
\end{split}\end{equation}
extends to a bounded operator
\begin{align*}
T\colon L^{p_1}(\R^d; X_1) \times &L^{p_2}(\R^d; X_2) \times \prod_{k=3}^n L^{p_j}(\R^d) \to L^{q_{n+1}}(\R^d; Y_3), \\
&{ 1<p_k\leq \infty, \, \frac{1}{n}< q_{n+1} < \infty}, \,{\textstyle  \frac{1}{q_{n+1}}} = \sum_{k=1}^n {\textstyle \frac{1}{p_k}}.
\end{align*}
\end{thm}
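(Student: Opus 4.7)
The plan is to combine the multilinear probabilistic representation theorem with dyadic randomization techniques tailored to the product structure $X_1\times X_2\to Y_3$. By an $n$-linear extension of Hyt\"onen's representation theorem (the bilinear case appears in \cite{LMOV}), the operator $T$ can be written as an average over pairs of random dyadic grids of $n$-linear dyadic shifts $S^{\vec k}$ of complexity $\vec k=(k_0,k_1,\dots,k_n)$ and of $n$-linear paraproducts, with a summable decay factor of order $2^{-\alpha\max_j k_j}$ attached to each shift. It therefore suffices to prove the corresponding $L^{p_1}(\R^d;X_1)\times L^{p_2}(\R^d;X_2)\times \prod_{k=3}^{n} L^{p_k}(\R^d)\to L^{q_{n+1}}(\R^d;Y_3)$ estimate for each dyadic model operator, with at most polynomial growth in $\max_j k_j$.

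For the paraproducts there are several sub-cases depending on which slot carries the $\BMO$ symbol. Each is handled via $\UMD$-valued $H^1$--$\BMO$ duality combined with the product $X_1\times X_2\to Y_3$. When the $\BMO$ slot is scalar, the estimates follow from classical paraproduct bounds together with the $\UMD$-valued square-function inequalities for the two vector-valued slots; when the $\BMO$ slot is $X_1$- or $X_2$-valued, the $\UMD$ property of the corresponding space converts the paraproduct norm into a scalar square-function estimate, and the product map places the output into $Y_3$ without additional cost.

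For a multilinear dyadic shift $S^{\vec k}$ one expands $f_1$ and $f_2$ in the Haar basis so that their Haar coefficients take values in $X_1$ and $X_2$, and the shift coefficients combine them via the product to land in $Y_3$. Applying the $\UMD$ property of $Y_3$ on the output Haar support, one passes to a Rademacher-averaged expression; using the contraction principle and the tensor-product structure of the shift coefficients, one then randomizes the $X_1$- and $X_2$-valued Haar expansions of $f_1$ and $f_2$ separately, invoking the $\UMD$ property of $X_1$ and $X_2$. This reduces the multilinear shift estimate to scalar square-function bounds for $f_1,f_2$ in $X_1$- and $X_2$-norms, combined with standard scalar bounds for the shift on the remaining arguments. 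The main obstacle is that, unlike in the complexity-zero bilinear case of \cite[Section~6]{HLP}, the presence of two simultaneously vector-valued inputs in distinct spaces prevents a single-pass randomization: the cross-terms from the two Haar expansions must be decoupled in a manner consistent with the product map, and the resulting polynomial-in-$\max_j k_j$ growth must be tracked carefully to match the exponential decay produced by the representation theorem. The induction-on-multilinearity scheme outlined in the introduction is precisely what enables this decoupling to be executed cleanly for arbitrary $n\ge 2$ without invoking $\RMF$-type assumptions on the spaces.
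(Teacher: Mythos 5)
Your first reduction -- the $n$-linear representation theorem, giving an average of dyadic shifts and paraproducts with exponential decay in the complexity, so that polynomial-in-complexity bounds for the model operators suffice -- is exactly the paper's route (Theorem \ref{thm:scal} via \eqref{eq:repthm}). The gap is in the part that carries all the weight: the estimates for the model operators themselves. The mechanisms you invoke are not available in general $\UMD$ spaces. There is no ``scalar square-function bound'' for functions valued in a general $\UMD$ space (that is a lattice phenomenon); the $\UMD$ property only yields randomized estimates, which is why the paper works with $\Rad(X)$ norms, the martingale decoupling inequality \eqref{eq:DecEst} and Stein's inequality \eqref{eq:SteinUMD}. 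Likewise, the shift coefficients $a_{K,(Q_j)}$ have no tensor-product structure -- they are arbitrary scalars subject only to a size normalization -- so the contraction principle cannot ``randomize the $X_1$- and $X_2$-valued Haar expansions separately''; the cross-terms you correctly identify as the main obstacle are precisely what must be handled, and deferring them to ``the induction scheme outlined in the introduction'' is not an argument. Concretely, the paper's treatment of a shift proceeds by: splitting the scales into the subcollections $\calD_{i,\kappa}$ of \eqref{eq:SubLattice}; expanding the non-cancellative pairings via \eqref{eq:NonCancExpand} so that all non-cancellative slots have complexity zero; decoupling in the output slot via \eqref{eq:DecEst}; applying Stein's inequality to the non-cancellative slot and removing its norm with the elementary product estimate \eqref{eq:BilTrivEst} (this is the whole point of Remark \ref{rem:2VSn}: for two vector-valued slots tied by a product this estimate is free, which is why no H\"older-tuple structure is needed in Theorem \ref{thm:X}); and finally, in the fully cancellative case, using Kahane--Khintchine, a randomization of the product (Lemma \ref{l:Rscalar}, or its trivial two-space analogue here) and decoupling again to recover $\prod_j\|f_j\|_{L^{p_j}(X_j)}$. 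None of these steps appear in your sketch, and without them the claimed reduction ``to scalar square-function bounds'' does not go through.

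Two further points. For the paraproducts, the symbols $a_K$ arising from the representation theorem are scalar $\BMO$ sequences, so your case distinction about $X_1$- or $X_2$-valued $\BMO$ slots does not arise; moreover $H^1$--$\BMO$ duality is not the mechanism used. The paper's proof (Theorem \ref{thm:MultiPara}) inserts random signs using the $\UMD$ property of the target, replaces $h_Q$ by $|h_Q|$, applies Stein's inequality \eqref{eq:SteinUMD}, pulls out the $X_2$-norm with the product estimate, and reduces to the linear $\UMD$ paraproduct bound \eqref{eq:LinearUMDPara} of Bourgain/Figiel--Wojtaszczyk. Finally, your argument as stated would only yield Banach-range H\"older exponents; the full claimed range $1<p_k\le\infty$, $\tfrac1n<q_{n+1}<\infty$ (including the quasi-Banach range) is obtained in the paper by upgrading the model-operator bounds to the sparse domination of Lemma \ref{lem:SparseModel} and then using the extrapolation argument of Subsection \ref{ss:sparse}; this step is missing from your proposal.
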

The proof of this model case is an adaptation of the proof of Theorem \ref{thm:scal} with some additional observations regarding the bilinear case -- see
Remark \ref{rem:2VSn}.
This simpler result also showcases why the genuine $n$-linear theory that we formulate next is harder than bilinear theory: the $n$-linear theory requires us to exploit a more careful product setting so that we can run our inductive proof. We also note that at least in the basic case $X_1 = Y_3 = X$ and $X_2 = \C$, Theorem \ref{thm:X} can also be seen as a corollary of Theorem \ref{thm:scal} using Example \ref{ex:tensoralgebra}. It is simpler to just look at the proof, however.

Our main theorem concerns extensions of $n$-linear CZO operators $T$ to an $n$-tuple $X_1,\ldots, X_n$ of $\UMD$ Banach spaces lying in an enveloping algebra $\mathcal A$, allowing for a standard definition of (associative, not necessarily abelian) product $\mathcal A \times \mathcal A \to \mathcal A$.  We refer to these configurations as $\UMD$ H\"older tuples if certain conditions are in place,  in particular, if the $n$-tuples are associated with suitable collections of related $m$-tuples, $m<n$.
If each $X_j$ is a subspace of $\mathcal A$, and $f_k\in L^\infty_c(\R^d) \otimes X_k$ for $1\leq k\leq n$, we may define the extension of a scalar integral operator by
\begin{equation}
\label{e:formal2}
\begin{split}
&T\left( f_1, \ldots, f_n\right) (x) = \sum_{j_1, \ldots, j_n}  T(f_{1,j_1}, \ldots,f_{n,j_n}) (x) \prod_{k=1}^n e_{k,j_k}, \qquad x\in \R^d, 
\\ &f_{k}= \sum_{j_k}  e_{k,j_k} f_{k,j_k}, \qquad f_{k,j_k}\in L^\infty_c(\R^d),\,e_{k,j_k}\in X_k.
\end{split}\end{equation} 
The abstract setup   is  developed in Section \ref{sec:UMDHT}. For expository purposes, herein  we provide a statement in a rather general concrete case of a $\UMD$ H\"older tuple. In the statement,  we denote  by $L^p(\mathcal M)$ the non-commutative $L^p$ spaces associated to   a von Neumann algebra $ \mathcal M $ endowed with a normal, semifinite, faithful trace $\tau$.
\begin{thm} \label{thm:Xj}
Let $ \mathcal M$ be a von Neumann algebra   endowed with a normal, semifinite, faithful trace. For $s=1,\ldots, S$, let $(M_s,\mu_s)$ be measure spaces and for $s=0,\ldots, S$ let 
 \begin{equation}
1<p^s_{1},\ldots p^s_{n},  q^s_{n+1}<\infty,\qquad  {  \frac{1}{q_{n+1}^s}} = \sum_{k=1}^n {  \frac{1}{p_k^s}}
\label{e:longtup}
\end{equation}
be Banach H\"older tuples. Let
\begin{equation}
\label{e:UMDsp1}
\begin{split}
&X_k=L^{p_k^S} (M_S,\mu_s; L^{p_k^{S-1}} (M_{S-1},\mu_{S-1};\cdots L^{p_k^{1}} (M_{1},\mu_{1}; L^{p_k^{0}}(\mathcal M))\cdots),\qquad k=1,\ldots,n,  \\ &Y_{n+1}=L^{q_{n+1}^S} (M_S,\mu_S; L^{q_{n+1}^{S-1}} (M_{S-1},\mu_{S-1};\cdots L^{q_{n+1}^{1}} (M_{1},\mu_{1}; L^{q_{n+1}^{0}}(\mathcal M))\cdots).
\end{split}
\end{equation}
 The $n$-linear operator \eqref{e:formal2} extends to a bounded operator
\[
\begin{split}
&
T: \prod_{k=1}^n L^{p_k}(\R^d; X_k)  \to L^{q_{n+1}}(\R^d; Y_{n+1}), \quad { 1<p_k\leq \infty, \, \frac{1}{n}< q_{n+1} < \infty}, \,{\textstyle  \frac{1}{q_{n+1}}} = \sum_{k=1}^n {\textstyle \frac{1}{p_k}},
\\ &T: \prod_{k=1}^n L^{1}(\R^d; X_k)  \to L^{\frac{1}{n},\infty}(\R^d; Y_{n+1}).
\end{split}
\]
 In fact, we have the stronger estimate
\begin{equation}\label{eq:eq1}
\begin{split}
&|\langle T(f_1, \ldots, f_n ), f_{n+1}\rangle | \lesssim  \left\|\mathrm{M}\left(|f_1|_{X_1} ,\ldots,|f_n|_{X_n}, |f_{n+1}|_{Y_{n+1}^*}\right)\right\|_{1},
\\ &\mathrm{M}(g_1,\ldots, g_{n+1})(x) \coloneqq \sup_{x\in Q } \prod_{j=1}^{n+1}\langle |g_j| \rangle_Q, \qquad \langle g \rangle_Q {\coloneqq} \frac{1}{|Q|} \int_Q g.
\end{split}
\end{equation}
The estimate \eqref{eq:eq1} is equivalent to a certain sparse bound, see Remark \ref{r:sparsemf}.
\end{thm}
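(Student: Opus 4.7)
The plan is to deduce Theorem \ref{thm:Xj} from the abstract $\UMD$ H\"older tuple result, Theorem \ref{thm:scal}, by verifying that the iterated Bochner spaces $X_1,\ldots, X_n, Y_{n+1}$ of \eqref{e:UMDsp1}, equipped with the product inherited from the algebra structure of $\mathcal M$, fit the $\UMD$ H\"older tuple axioms of Section \ref{sec:UMDHT}. Once this matching is in place, both the $L^p$ and weak-type bounds and the sparse domination \eqref{eq:eq1} are immediate consequences of the abstract theorem.

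The first task is to check that each $X_k$ and the target $Y_{n+1}$ is $\UMD$. This proceeds by induction on the iteration depth $S$: the base case uses the well-known fact that non-commutative $L^{p}(\mathcal M)$ is $\UMD$ for $1<p<\infty$, and the inductive step invokes the classical stability of the $\UMD$ property under the passage $Z \mapsto L^{p}(\mu;Z)$ for $1<p<\infty$, valid for arbitrary measure spaces. The multilinear map $(f_1,\ldots,f_n)\mapsto \prod_{k=1}^n f_k$ is a norm-one $n$-linear map $X_1\times\cdots\times X_n\to Y_{n+1}$ by applying the non-commutative H\"older inequality pointwise at the innermost level, and then iterating the classical scalar H\"older inequality outward across the base spaces $(M_s,\mu_s)$, using the tuple condition \eqref{e:longtup} at each level.

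The subtler point, and the one my plan is really designed around, is building the auxiliary family of lower-multilinearity $m$-tuples, $m<n$, that feeds the induction inside the proof of Theorem \ref{thm:scal}. Given any partition of $\{1,\ldots,n\}$ into disjoint blocks $I_1,\ldots,I_m$, I would form the partial products $\widetilde X_\ell = \prod_{k\in I_\ell} X_k$ inside the algebra $\mathcal M$. Each $\widetilde X_\ell$ is again an iterated Bochner space of the same shape as \eqref{e:UMDsp1}, now with exponents $\tfrac{1}{\tilde p_\ell^s}=\sum_{k\in I_\ell}\tfrac{1}{p_k^s}$; in particular it is $\UMD$ by the previous paragraph, and the grouped product $\widetilde X_1\times\cdots\times\widetilde X_m\to Y_{n+1}$ is again a contractive H\"older map. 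This gives precisely the hierarchical tower of partial-product tuples that the abstract definition calls for, with the same enveloping algebra $\mathcal M$ playing the role of the ambient product at every stage.

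I expect the main obstacle to be a quantitative one: one must verify that the $\UMD$ constants of each $\widetilde X_\ell$ and the contraction constants of all the grouped products are controlled uniformly in terms of the original data, so that the multilinear dyadic representation theorem and the induction on the multilinearity index carry through without cumulative blow-up. Provided that Section \ref{sec:UMDHT} is calibrated so that these constants behave well under the grouping operation -- which is plausible because forming a partial product $\widetilde X_\ell$ just contracts one inner factor of each Bochner layer via non-commutative H\"older and leaves the outer layers structurally intact -- Theorem \ref{thm:scal} delivers the sparse form bound \eqref{eq:eq1} directly. The remaining $L^{q_{n+1}}$ and weak-$L^{1/n}$ estimates then follow at once from \eqref{eq:eq1} via the standard boundedness of the multisublinear maximal operator $\mathrm M$ on products of Lebesgue spaces, as indicated in Remark \ref{r:sparsemf}.
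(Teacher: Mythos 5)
There is a genuine gap, and it sits exactly where the paper's Appendix \ref{a:1} does its work. Your plan verifies only the ``easy'' half of the $\UMD$ H\"older tuple axioms: that each space in \eqref{e:UMDsp1} is $\UMD$ (stability of $\UMD$ under $Z\mapsto L^p(\mu;Z)$) and that the product map is a contraction (iterated H\"older). But Definition \ref{defn:productsys1} does not ask merely for a contractive product into a $\UMD$ target: property P1 requires the \emph{identity} $X_{j_0}=Y\left(\{X_j:j\neq j_0\}\right)$, and P2 requires that for every subset $\calJ$ the trace-duality space $Y(\{X_j^s:j\in\calJ\})$ is admissible and coincides, with equivalent norm, with the concrete mixed-norm space $Y^s_{\calJ}$ with H\"older-conjugated exponents. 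Contractivity of the product only yields the one-sided bound $\|g\|_{Y(\{X_j^s:j\in\calJ\})}\le\|g\|_{Y_\calJ^s}$ (this is \eqref{e:n0}, i.e.\ Lemma \ref{lem:max}); the substantive content is the converse, a norming/converse-H\"older statement. The paper proves it via a factorization theorem: every positive simple function of unit norm in the ``sum-exponent'' space $X_\calJ^s$ factors as a product $\prod_u f_u$ with $\|f_u\|_{X^s_{j_u}}=1$ (Lemma \ref{l:auxlem2}), which in turn needs the continuous factorization maps of Lemma \ref{l:auxlem1} in the iterated scalar mixed-norm spaces (continuity is what preserves strong measurability through the Bochner layers) together with fractional powers $A^{q^0_\calJ/p^0_u}$ of positive operators via the Borel functional calculus. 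Only with this factorization can one show that the abstractly defined $Y(\cdot)$ spaces are Banach, admissible, and isometric to $(X_\calJ^s)^*=Y_\calJ^s$ (the identity \eqref{e:equality}), which is what licenses the application of Theorem \ref{thm:scal}.

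Relatedly, your assertion that the ``partial product'' $\widetilde X_\ell=\prod_{k\in I_\ell}X_k$ ``is again an iterated Bochner space of the same shape'' is precisely the unproved point, not a routine observation: the set of products is not a priori a linear space, and the framework instead works with the trace-duality spaces $Y(\cdot)$, whose identification with concrete mixed-norm non-commutative spaces is the crux. (Note also that the exponents you write, $1/\tilde p^s_\ell=\sum_{k\in I_\ell}1/p^s_k$, describe the sum space $X^s_{I_\ell}$, whereas the tuple axioms involve its dual $Y^s_{I_\ell}$.) By contrast, the uniformity-of-constants issue you flag as the main obstacle is not one: the tuple axioms are qualitative norm equivalences among finitely many spaces, and the shift and paraproduct estimates depend only on the $\UMD$ constants of the spaces appearing in the (finite) tower, so no cumulative blow-up arises. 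Once the H\"older tuple structure is established as in Proposition \ref{p:a1}, your final step --- citing Theorem \ref{thm:scal}, Remark \ref{r:sparsemf}, and the boundedness of $\mathrm M$ --- matches the paper.
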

We send to Subsection \ref{ss:sparse} and to the references \cite{CDPOMRL,CDPOBP} for    more details on sparse bounds and to \cite{LMMOV,LMO} for a survey of the   weighted inequalities that may be derived as a consequence.

 Theorem \ref{thm:Xj} is obtained as a corollary of Theorem \ref{thm:scal} using Example \ref{ex:thmXj}. However, we remark that, at least to the best of the authors' knowledge,  the spaces \eqref{e:UMDsp1} encompass all known \emph{examples} of $\UMD$ Banach spaces. We further remark that the mixed norm structure of the spaces \eqref{e:UMDsp1} prevents from using purely non-commutative tools, as \eqref{e:UMDsp1} may be interpreted as semi-commutative spaces only if $p_k^s$ does not vary with $s$ for all $1\leq k \leq n$; on the other hand, \eqref{e:UMDsp1} are not $\UMD$ lattices, so that Theorem \ref{thm:Xj} is out of reach of purely lattice-type techniques. 
 
  Theorems \ref{thm:X} and \ref{thm:Xj} can be used to deduce certain weighted multilinear  Leibniz rules in the $\UMD$-valued and non-commutative setting. For simplicity of notation, we particularize the statements to the bilinear, unweighted, non-endpoint case for the homogeneous fractional derivative $D^s f=\mathcal F^{-1}(|\xi|^s \widehat f(\xi))$, in the setting of Theorem \ref{thm:X}.  A variety of formulations may be found e.g.\ in the article by Grafakos and Oh \cite{GrafOh}.
\begin{cor}[Fractional Leibniz rules in $\UMD$ spaces] \label{cor:leib1} Let $X_1,X_2,Y_3$ be  $\UMD$ spaces as in the statement of Theorem \ref{thm:X}. For all sufficiently smooth $f_1: \R^d\to X_1, f_2: \R^d \to X_2$  there holds 
\[
\left\|D^s(f_1f_2) \right\|_{L^{q_3}(\R^d; Y_3)} \lesssim \left\|D^sf_1  \right\|_{L^{p_1}(\R^d; X_1)}\left\|f_2  \right\|_{L^{p_2}(\R^d; X_2)} + \left\| f_1  \right\|_{L^{r_1}(\R^d; X_1)}\left\|D^sf_2  \right\|_{L^{r_2}(\R^d; X_2)}
\]
whenever $s>d$ and 
\begin{equation}
\label{e:leibtup}
1<p_1,p_2,r_1,r_2 \leq \infty, \  \frac12 <q_3<\infty, \  \textstyle \frac{1}{q_3}=\frac{1}{p_1}+\frac{1}{p_2} = \frac{1}{r_1}+\frac{1}{r_2}.
\end{equation}
\end{cor}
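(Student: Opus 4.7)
The plan is to reduce Corollary \ref{cor:leib1} to Theorem \ref{thm:X} through a standard paraproduct/Coifman--Meyer frequency decomposition of $f_1 f_2$, which will express $D^s(f_1 f_2)$ as a finite sum of bilinear Calder\'on--Zygmund operators applied to either $(D^s f_1, f_2)$ or $(f_1, D^s f_2)$. Once this rewriting is in place, the $\UMD$ H\"older-tuple hypothesis on $(X_1, X_2, Y_3)$ allows us to invoke Theorem \ref{thm:X} directly on each piece.

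First, I fix a smooth Littlewood--Paley partition of unity on $\R^d$ with frequency projections $\Delta_j$ and low-pass cutoffs $S_j = \sum_{k \leq j-3}\Delta_k$, and decompose
\begin{equation*}
f_1 f_2 = \Pi_{HL}(f_1,f_2) + \Pi_{LH}(f_1,f_2) + \Pi_{HH}(f_1,f_2),
\end{equation*}
where $\Pi_{HL}(f_1,f_2) = \sum_j (\Delta_j f_1)(S_j f_2)$, $\Pi_{LH}$ is its transpose, and $\Pi_{HH}(f_1,f_2) = \sum_{|j-k| \leq 2}(\Delta_j f_1)(\Delta_k f_2)$. Each product is legitimately formed in the H\"older triple $(X_1, X_2, Y_3)$ of Theorem \ref{thm:X}.

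Second, I apply $D^s$ to each piece and realize the result as a bilinear Fourier multiplier. On the $HL$ cone $|\eta| \ll |\xi|$, the symbol $|\xi + \eta|^s$ factors as $|\xi|^s \cdot m_{HL}(\xi,\eta)$ with $m_{HL}$ a smooth bounded Coifman--Meyer symbol of order zero; consequently $D^s \Pi_{HL}(f_1,f_2) = T_{HL}(D^s f_1, f_2)$ for a bilinear Calder\'on--Zygmund operator $T_{HL}$, and symmetrically $D^s \Pi_{LH}(f_1,f_2) = T_{LH}(f_1, D^s f_2)$. For the $HH$ piece, I perform a secondary dyadic decomposition $|\xi+\eta|^s = \sum_k \chi_k(\xi+\eta) |\xi+\eta|^s$, so that each localized symbol, after normalization by $|\xi|^s \sim 2^{js}$, becomes a genuine Coifman--Meyer symbol whose bilinear CZO norm decays geometrically in $j-k$. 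Summing yields bilinear Calder\'on--Zygmund representations $D^s \Pi_{HH}(f_1,f_2) = T_{HH}^{(1)}(D^s f_1, f_2) + T_{HH}^{(2)}(f_1, D^s f_2)$.

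Third, I apply Theorem \ref{thm:X} to each of the bilinear CZOs with the $\UMD$ H\"older triple $(X_1, X_2, Y_3)$, using the exponent H\"older tuple $(p_1, p_2, q_3)$ for pieces acting on $(D^s f_1, f_2)$ and $(r_1, r_2, q_3)$ for those acting on $(f_1, D^s f_2)$. Summing produces the desired inequality. The main obstacle is the $HH$ piece: the factor $|\xi+\eta|^s$ is not smooth across the hyperplane $\xi+\eta = 0$, so the secondary decomposition and verification of the standard $n$-linear Calder\'on--Zygmund kernel estimates, uniformly in the localization parameters and with operator norm summable against Theorem \ref{thm:X}, requires some care. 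The hypothesis $s > d$ enters precisely here, ensuring sufficient pointwise decay and smoothness of the resulting kernels so that the geometric sums converge in the relevant operator norm.
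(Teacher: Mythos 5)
Your overall route coincides with the paper's (which follows the opening of Grafakos--Oh): Bony/Littlewood--Paley splitting of $f_1f_2$, realization of the high-low and low-high pieces as Coifman--Meyer bilinear multipliers acting on $(D^sf_1,f_2)$ resp.\ $(f_1,D^sf_2)$, and an appeal to Theorem \ref{thm:X}. The gap is in your treatment of the high-high term, which is exactly the crux. Your key claim --- that after dividing by $|\xi|^s\sim 2^{js}$ each localized symbol $\chi_k(\xi+\eta)\,|\xi+\eta|^s\,2^{-js}$ on the region $|\xi|\sim|\eta|\sim 2^j$, $|\xi+\eta|\sim 2^k$, is a genuine Coifman--Meyer symbol whose bilinear CZO norm decays geometrically in $j-k$ --- is not justified in the range $s>d$, and at the symbol level it is false: every derivative that lands on $\chi_k(\xi+\eta)|\xi+\eta|^s$ costs $2^{-k}$ rather than $(|\xi|+|\eta|)^{-1}\sim 2^{-j}$, so the Coifman--Meyer seminorm of total order $M$ of this piece is of size $2^{-(j-k)(s-M)}$. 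The multiplier-to-kernel machinery needed to make each piece a CZO in the sense of Subsection \ref{sec:MultiSing} (kernel size $\lesssim \mathrm{dist}^{-2d}$ plus some H\"older smoothness, obtained by integrating the symbol by parts) consumes more than $2d$ derivatives, so summability of these constants over $j-k$ would force $s$ to exceed that smoothness order --- far more than the hypothesis $s>d$. Thus the geometric-decay statement on which your third step relies is unsubstantiated precisely where the hypothesis $s>d$ is supposed to enter; moreover, once the CM claim is dropped you have no control of the scalar boundedness/WBP/BMO data that the CZO definition (and hence Theorem \ref{thm:X}) requires for each piece.

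The decay that genuinely survives under the mere assumption $s>d$ is visible only at the kernel level, with a reduced H\"older exponent, and that is how the paper argues: the whole high-high contribution is written as a single bilinear SIO applied to $(D^sf_1,f_2)$, with explicit kernel $K(x,y_1,y_2)=\sum_{m}\int 2^{3md}\phi_s(2^m(u-x))\psi(2^m(u-y_1))\psi(2^m(u-y_2))\,\mathrm{d}u$, where $\phi_s=D^s\phi$ satisfies only the decay $|\phi_s(x)|\lesssim(1+|x|)^{-(d+s)}$. The size bound $|K|\lesssim \mathrm{dist}^{-2d}$ uses exactly $s>d$ (the tail $\sum_{2^m>\mathrm{dist}^{-1}}2^{m(d-s)}$ converges), smoothness is extracted only of order $(s-d)/2$, which suffices since Theorem \ref{thm:X} tolerates any kernel exponent $\alpha\in(0,1]$, and the known scalar bound $\|\Pi_3\|_{L^3\times L^3\to L^{3/2}}\lesssim 1$ supplies the remaining CZO input; Theorem \ref{thm:X} is then applied once, with no secondary decomposition and no summation of operator norms. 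To repair your argument you would either have to redo your $(j,k)$-decomposition at the kernel level --- proving for each piece a size bound of the form $2^{-(j-k)(s-d)}\mathrm{dist}^{-2d}$, H\"older-$\alpha$ bounds with $\alpha<s-d$, and summable scalar bounds --- or, more simply, drop the decomposition and verify the $\mathrm{CZ}_\alpha$ conditions for the summed kernel directly, as the paper does.
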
 
Corollary \ref{cor:leib1} appears to be the first instance of a Leibniz type rule in the full vector-valued setting, with no additional assumptions on the $\UMD$ spaces involved.
We have not strived for optimality  of the range for the fractional exponent $s$. While the range obtained in  Corollary \ref{cor:leib1} is wider than what would follow from results of  Coifman-Meyer type, see \cite[Remark 1]{GrafOh},  the extension to the sharp range    $s>\max \Big\{0, d\left(\frac{1}{q_3}-1\right)\Big\}$ requires bilinear estimates for kernels which fail to be of the standard CZ type considered herein. Such estimates are carried out e.g. in \cite{GrafOh}: their extension to the full vector-valued setting  is left for future work.  \begin{proof}[Proof of Corollary \ref{cor:leib1}] 
We follow the beginning of the  proof of  \cite[Theorem 1]{GrafOh}. The estimate we seek is reduced to a bound for the $\UMD$-valued extension of  three different bilinear paraproducts (meaning suitable parts of a Littlewood--Paley decomposition of a product of functions -- not in the exact sense as we use the word in connection with dyadic model operators). We note that the symbol of the high-low paraproducts $\Pi_1$ and $\Pi_2$ is  of Coifman-Meyer type; therefore $\Pi_1,\Pi_2$ are bilinear CZO operators as defined in Subsection \ref{sec:MultiSing} and  Theorem \ref{thm:Xj} applies directly. The high-high term $\Pi_3$ is a bilinear integral  operator with kernel 
\[
K(x, y_1,y_2) = \sum_{m\in \mathbb Z} \int_{\R^d} 2^{3md} \phi_s(2^m (u-x))\psi(2^m (u-y_1))\psi(2^m (u-y_2)) \, \mathrm{d} u
\]
where $\psi$ is a Schwartz function whose Fourier transform $\Psi$ is supported in an annular region around the origin and $\phi_s=D^s\phi$ for some Schwartz function $\phi$ such that its Fourier transform has compact support containing $0$, so that
\[
|\phi_s(x)|   \lesssim (1+|x|)^{-(d+s)}, \qquad x\in \R^d.
\]
As $s > d$ for us, this implies that  $\Pi_{3}$ is a  bilinear CZO operator with a kernel $K$ satisfying  
\[
\|\Pi_{3}\|_{L^{3}\times L^3 \to L^\frac32} + \|K\|_{\mathrm{CZ}_{(s-d)/2}} \lesssim 1,
\]
where $\| K \|_{\operatorname{CZ}_\alpha}$ is the kernel constant defined in the beginning of Section \ref{sec:MultiSing}.
The required bounds for $\Pi_3 $ follow from an application of Theorem \ref{thm:X}. 
\end{proof}

\subsection{Proof techniques and novelties}
A basic example of an $n$-linear dyadic shift operator of complexity zero on $\R$, in adjoint form, is
\[
(f_1,\ldots,f_{n+1}) \mapsto
\sum_{m\in \mathbb Z}\eps_m \int   \Big(\prod_{k\in C} \Delta_m f_k (x)\Big)   \Big(\prod_{k\in N} E_m f_k (x)\Big) \, \mathrm{d} x
\]
where $\eps_m$ are bounded coefficients, and   $E_m$ and $\Delta_m$ respectively indicate the conditional expectation on the $m$-th dyadic filtration and the corresponding martingale difference, $C \cap N = {\varnothing}$ and $C\cup N=\{1,\ldots,n+1\}$, with the key feature that the cardinality of the \emph{cancellative} indices $C$ is always at least 2. We approach $\UMD$-valued extensions of the above forms to $(n+1)$-tuples of $\UMD$ spaces via a novel induction argument, aimed at reducing the cardinality of the set of \emph{non-cancellative} indices $N$ and the linearity of the shift $n$ at the same time. The induction relies upon a certain structure of the tuples involved, which is most easily described in the bilinear, $n=2$, case.  Loosely speaking, we consider $\UMD$ spaces $X_1,X_2,X_3$ endowed with a linear functional  $\tau$ defined on all  products $e_1e_2e_3$, $e_j\in X_j$, with the property that
\[
\|e_1\|_{X_1} \sim \sup_{|e_2|_{X_2}=|e_3|_{X_3}=1}
|\tau(e_1e_2 e_3)| 
\]
and the same holds for all permutations of $X_1,X_2,X_3$.   In combination with the martingale  decoupling inequality of McConnell \cite{Mc}  and Hyt\"onen \cite{Hy2}, and Stein's inequality in $\UMD$ spaces, this structure allows to reduce a trilinear shift form on $X_1,X_2,X_3$ where, say,   $1\in C$ and $2\in N$, to a bilinear shift form on $X_1,X_1^*$, where both indices are cancellative, and whose boundedness is known from the $\UMD$ character of $X_1$. 
The induction is crucial in the $n$-linear case to allow a repeated use of Stein's inequality.

We remark here that the martingale decoupling has been previously used by H\"anninen and Hyt\"onen \cite{HH} in the proof of a  $T1$ theorem for linear singular integrals on $\UMD$ spaces with operator-valued kernels, providing among other results a non-translation invariant analogue of Weis's theorem \cite{Weis}. The multilinear operator-valued theory, together with a related representation theorem, is the object of   forthcoming work by the authors \cite{DLMV2}.

\subsection*{Acknowledgments} The authors would like to warmly thank Yumeng Ou for fruitful discussions on the subject of multilinear UMD-valued singular integrals. F.\ Di Plinio is grateful to Ben Hayes and Vittorino Pata for enlightening exchanges on factorization in noncommutative $L^p$ spaces.

\section{Definitions and preliminaries} 
\subsection{Vinogradov notation}
We write $A \lesssim B$ if $A \le CB$ for some absolute constant $C$. The constant $C$ can at least depend on the dimensions of the appearing Euclidean spaces, on integration exponents, on the degree of linearity of the multilinear operators, 
and on various Banach space constants.
We use the notation $A \sim B$ if $B \lesssim A \lesssim B$.

\subsection{Dyadic notation}\label{sec:randomlattice}
Let $\calD_0$ be the dyadic lattice in $\R^d$, defined by
$$
\calD_0=\{2^{-k}([0,1)^d+m) \colon k \in \Z, m \in \Z^d\}.
$$
We recall the random dyadic grids of Nazarov--Treil--Volberg, see for example \cite{NTV}. 
The version we use here is from \cite{Hy2}.
Let $\Omega=(\{0,1\}^d)^{\Z}$ and let $\bbP$ be the natural probability measure on $\Omega$ such that the coordinates are independent and uniformly distributed on $\{0,1\}^d$. If
$Q \in \calD_0$ and $\omega=(\omega_k)_{k\in \Z} \in \Omega$, we set
$$
Q+\omega{\coloneqq}Q+\sum_{k \colon 2^{-k} < \ell(Q)} \omega_k2^{-k}.
$$
The random dyadic lattice $\calD_\omega$ on $\R^d$ is defined by
$
\calD_\omega
= \{Q+\omega \colon Q \in \calD_0\}.
$
By a dyadic lattice $\calD$ we mean that $\calD=\calD_\omega$ for some $\omega$.

Let $X$ be a Banach space. If $p \in (0, \infty]$ we denote by $L^p(X)=L^p(\R^d;X)$ the usual
Bochner space of $X$-valued functions $f \colon \R^d \to X$. Let $\calD$ be a dyadic lattice. 
Suppose $Q \in \calD$ and $f \in L^1_{\loc}(X)$ (the set of locally integrable functions).
We use the following notation:
\begin{itemize}
\item The side length of $Q$ is denoted by $\ell(Q)$;
\item $\ch (Q)$ consists of those $Q' \in \calD$ such that $Q' \subset Q$ and $\ell(Q')=\ell(Q)/2$;
\item If $k \in \Z$, $k \ge 0$, then $Q^{(k)}$ denotes the cube $R \in \calD$ such that $Q \subset R$ and 
$2^k\ell(Q) =\ell(R)$;
\item The average of $f$ over $Q$ is $\langle f \rangle_Q = \frac{1}{|Q|} \int_Q f \ud x$,
and we also write $E_Q f=\langle f \rangle_Q 1_Q$;
\item The martingale difference $\Delta_Q f$ is $\Delta_Q f= \sum_{Q' \in \ch (Q)} E_{Q'} f - E_{Q} f$;
\item For $k \in \Z$, $k \ge 0$,  define 
$$
\Delta_Q^k f=\sum_{\substack{R \in \calD \\ R^{(k)}=Q}} \Delta_{R} f 
\quad \text{and} \quad
E_Q^k f=\sum_{\substack{R \in \calD \\ R^{(k)}=Q}} E_{R} f.
$$
\end{itemize}

\subsubsection*{Haar functions}
When $Q \in \calD$ we denote by $h_Q$ a cancellative $L^2$ normalized Haar function. This means the following.
Writing $Q = I_1 \times \cdots \times I_d$ we can define the Haar function $h_Q^{\eta}$, $\eta = (\eta_1, \ldots, \eta_d) \in \{0,1\}^d$, by setting
\begin{displaymath}
h_Q^{\eta} = h_{I_1}^{\eta_1} \otimes \cdots \otimes h_{I_d}^{\eta_d}, 
\end{displaymath}
where $h_{I_i}^0 = |I_i|^{-1/2}1_{I_i}$ and $h_{I_i}^1 = |I_i|^{-1/2}(1_{I_{i, l}} - 1_{I_{i, r}})$ for every $i = 1, \ldots, d$. Here $I_{i,l}$ and $I_{i,r}$ are the left and right
halves of the interval $I_i$ respectively. If $\eta \ne 0$ the Haar function is cancellative: $\int h_Q^{\eta} = 0$. We usually exploit notation by suppressing the presence of $\eta$,
and simply write $h_Q$ for some $h_Q^{\eta}$, $\eta \ne 0$. 

Notice that if $f \in L^1_{\loc}(X)$, then $\Delta_Q f = \sum_{\eta \ne 0} \langle f, h_{Q}^{\eta}\rangle h_{Q}^{\eta}$, or suppressing the $\eta$ summation, $\Delta_Q f = \langle f, h_Q \rangle h_Q$.
Here $\langle f, h_Q \rangle = \int f h_Q$.

\subsection{Definitions and properties related to Banach spaces}\label{sec:BanachProperties}
An extensive treatment of Banach space theory is given in the books \cite{HNVW1, HNVW2} by Hyt\"onen, van Neerven, Veraar and Weis.

We say that $\{\varepsilon_k\}_k$ is a \emph{collection of independent random signs}, where $k$ runs over some index set, if there exists a probability space $(\calM, \mu)$ so that $\varepsilon \colon \calM \to \{-1,1\}$, 
$\{\varepsilon_k\}_k$ is independent and
$\mu(\{\varepsilon_k=1\})=\mu(\{\varepsilon_k=-1\})=1/2$. 
Below, $\{\varepsilon_k\}_k$ will always denote a collection of independent random signs.

Suppose $X$ is a Banach space. We denote the underlying norm by $| \cdot |_X$.
The Kahane-Khintchine inequality says that
for all $x_1,\dots, x_M \in X$ and $p,q \in (0,\infty)$ there holds that
\begin{equation}\label{eq:KK}
\Big(\E \Big | \sum_{m=1}^M \varepsilon_m x_m \Big |_X^p \Big)^{1/p}
\sim \Big(\E \Big | \sum_{m=1}^M \varepsilon_m x_m \Big |_X^q \Big)^{1/q}.
\end{equation}
We also denote
$$
\|(x_m)\|_{\Rad(X)} {\coloneqq} \Big(\E \Big | \sum \varepsilon_m x_m \Big |_X^2 \Big)^{1/2}.
$$
The Kahane contraction principle says that if $(a_m)_{m=1}^M$ is a sequence of scalars and $p \in (0, \infty]$, then
\begin{equation}\label{eq:KCont}
\Big( \E \Big | \sum_{m=1}^M \varepsilon_m a_m x_m \Big |_{X}^p \Big)^{1/p}
\lesssim \max |a_m| \Big( \E \Big | \sum_{m=1}^M \varepsilon_m x_m \Big |_{X}^p \Big)^{1/p}.
\end{equation}
Actually, if $p \in [1, \infty]$ and $a_m \in \R$, then \eqref{eq:KCont} holds with ``$\le$'' in place of ``$\lesssim$'', 
see \cite{HNVW1} for more details.

A Banach space $X$ is said to be a $\UMD$ \emph{space} if for all $p \in (1,\infty)$, all
$X$-valued $L^p$-martingale difference sequences $(d_j)_{j=1}^k$ and signs $\epsilon_j \in \{-1,1\}$ 
there holds that
\begin{equation}\label{eq:UMDDef}
\Big\| \sum_{j=1}^k \epsilon_j d_j \Big \|_{L^p(X)} 
\lesssim \Big\| \sum_{j=1}^k d_j \Big \|_{L^p(X)}. 
\end{equation}
Here the $L^p(X)$-norm is with respect to the measure space where the martingale differences are defined. 
If the estimate \eqref{eq:UMDDef} holds for one $p_0\in (1,\infty)$, then it
holds for all $p \in (1, \infty)$.


A version for $\UMD$-valued functions of Stein's inequality concerning conditional expectations is due to  Bourgain. For a proof, see for example
 \cite[Theorem 4.2.23]{HNVW1}. For our purposes we formulate the estimate in the following way.
Suppose $X$ is a $\UMD$ space and let $\calD \subset \R^d$ be a dyadic lattice. Suppose
that for each $Q \in \calD$ we have a function $f_Q \in L^1_{\loc}(X)$ supported in $Q$ (such that only 
finitely many of them are non-zero). Then for all $p \in (1, \infty)$ there holds that
\begin{equation}\label{eq:SteinUMD}
\E \Big \| \sum_{Q \in \calD} \varepsilon_Q \langle f_Q \rangle_Q 1_Q \Big \|_{L^p(X)}
\lesssim \E \Big \| \sum_{Q \in \calD} \varepsilon_Q  f_Q \Big \|_{L^p(X)}.
\end{equation}

\subsubsection*{The decoupling inequality}
We record a special case of the decoupling estimate \cite[Theorem 6]{HH} by H\"anninen--Hyt\"onen. These
decoupling estimates originate from McConnell \cite{Mc}, but see also Hyt\"onen \cite{Hy2}.

Let $\calD$ be a dyadic lattice in $\R^d$ and $Q \in \calD$. 
Let $\calV_Q$ be the probability measure space $\calV_Q=(Q, \Leb(Q), |Q|^{-1} \ud x \lfloor Q)$,
where $\Leb(Q)$ is the set of Lebesgue measurable subsets of $Q$ and 
$|Q|^{-1} \ud x \lfloor Q$ is the normalized Lebesgue measure restricted to $Q$.
Define the product probability space $\calV= \prod_{Q \in \calD} \calV_Q$,
and let $\nu$ be the related measure. If $y \in \calV$, we denote the coordinate related to $Q \in \calD$
by $y_Q$.

Suppose $X$ is a $\UMD$ space, $p \in (1, \infty)$ and $f \in L^p(X)$.
Let $k \in \{0,1,2, \dots\}$ and $j \in \{0, \dots, k\}$.
Define $\calD_{j,k} \subset \calD$
by 
\begin{equation}\label{eq:SubLattice}
\calD_{j,k}=\{Q \in \calD \colon \ell(Q)=2^{m(k+1)+j} \text{ for some } m \in \Z\}.
\end{equation}
\cite[Theorem 6]{HH} implies that
\begin{equation}\label{eq:DecEst}
\int_{\R^d} \Big | \sum_{Q \in \calD_{j,k}} \Delta^l_Q f(x)  \Big |_X^p \ud x
\sim \E \int_{\R^d} \int_{\calV} \Big | \sum_{Q \in \calD_{j,k}} \varepsilon_Q 1_Q(x)\Delta^l_Q f(y_Q)  \Big |_X^p \ud \nu(y) \ud x
\end{equation}
for any $l \in \{0, 1, \ldots, k\}$.
The point of dividing to the subcollections $\calD_{j,k}$ is that now $\Delta^l_Q f$ is constant on
every $Q' \in \calD_{j,k}$ such that $Q' \subsetneq Q$, which is required by the decoupling theorem (together with the fact that
$\int \Delta^l_Q f = 0$ and $\supp\Delta^l_Q f \subset Q$).



\subsection{Multilinear singular integrals and model operators}\label{sec:MultiSing}
A function
$$
K \colon \R^{d(n+1)} \setminus \Delta \to \C, \qquad \Delta=\{x = (x_1, \dots, x_{n+1}) \in\R^{d(n+1)} \colon x_1=\dots =x_{n+1}\},
$$
is called an $n$-linear basic kernel if for some $\alpha \in (0,1]$ and $C_{K} < \infty$ it holds that
$$
|K(x)| \le \frac{C_{K}}{\Big(\sum_{m=2}^{n+1} |x_1-x_m|\Big)^{dn}},
$$
and for all $j \in \{1, \dots, n+1\}$ it holds that
$$
|K(x)-K(x')| \le C_{K} \frac{|x_j-x_j'|^{\alpha}}{\Big(\sum_{m=2}^{n+1} |x_1-x_m|\Big)^{dn+\alpha}}
$$
whenever $x=(x_1, \dots, x_{n+1}) \in\R^{d(n+1)} \setminus \Delta$ and 
$x'=(x_1, \dots, x_{j-1},x_j',x_{j+1},\dots x_{n+1}) \in\R^{d(n+1)}$ satisfy
$$
|x_j-x_j'| \le 2^{-1} \max_{2 \le m \le n+1} |x_1-x_m|.
$$
The best constant $C_K$ is called $\| K \|_{\operatorname{CZ}_\alpha}$.

An $n$-linear operator $T$ defined on a suitable class of functions (e.g. on the linear combinations of cubes)
is an $n$-linear \emph{singular integral operator (SIO)} with an associated kernel $K$, if we have
$$
\langle T(f_1, \dots, f_n), f_{n+1} \rangle=
\int_{\R^{d(n+1)}}  K(x_{n+1},x_1, \dots, x_n)\prod_{j=1}^{n+1} f_j(x_j) \ud x
$$
whenever $\supp f_i \cap \supp f_j = {\varnothing}$ for some $i \not= j$.

We say that $T$ is an $n$-linear \emph{Calder\'on--Zygmund operator (CZO)} if the following conditions hold:
\begin{itemize}
\item $T$ is an $n$-linear SIO.
\item We have that for all $m \in \{0, \ldots, n\}$ there holds that
$$
\| T^{m*}(1, \ldots, 1) \|_{\BMO} {\coloneqq} \sup_{\calD}
\sup_{K_0 \in \calD} \Big( \frac{1}{|K_0|} \sum_{\substack{K \in \calD \\ K \subset K_0}} |\langle T^{m*}(1, \ldots, 1), h_K\rangle|^2 \Big)^{1/2} < \infty,
$$
where the first supremum is taken over all dyadic lattices $\calD$.
Here $T^{0*} {\coloneqq} T$, $T^{m*}$ denotes the $m$th adjoint of T for $m \in \{1,\ldots,n\}$,
and the pairings $\langle T^{m*}(1, \ldots, 1), h_K\rangle$ have a standard $T1$ type definition with the aid of the kernel $K$.
\item We have that
$$
\|T\|_{\WBP} {\coloneqq} \sup_{\calD} \sup_{Q \in \calD}
|Q|^{-1} |\langle T(1_Q, \ldots, 1_Q), 1_Q\rangle| < \infty.
$$
\end{itemize}
An SIO $T$ is a CZO if and only if
\begin{equation}\label{eq:CZOBdd}
\| T(f_1, \dots, f_n) \|_{L^{q_{n+1}}(\R^d)}
\lesssim \prod_{m=1}^n \| f_m\|_{L^{p_m}(\R^d)}
\end{equation}
for \emph{some} (equivalently for all) exponents $p_1, \dots, p_n \in (1, \infty)$, $q_{n+1} \in (1/n, \infty)$ satisfying $\sum_{m=1}^n 1/p_m=1/q_{n+1}$.
While such a $T1$ theorem is well-known (see e.g. \cite{DLMV2, GT, LMOV}), we will need a very precise   version of this called a dyadic representation theorem. To this end, we need some definitions.

Let $k=(k_1, \dots, k_{n+1})$, $0 \le k_i \in \Z$, and let $\calD$ be a dyadic lattice in $\R^d$.
An operator $S = S_{\calD}^k$ is called an $n$-linear dyadic shift if it has the form
\begin{equation} \label{e:shift}
S(f_1,\dots,f_n)
=\sum_{K \in \calD} A_K(f_1, \ldots, f_n),
\end{equation}
where
$$
A_K(f_1, \ldots, f_n) = 
\sum_{\substack{Q_1, \dots, Q_{n+1} \in \calD \\ Q_j^{(k_j)}=K}}
a_{K,(Q_j)}\prod_{j=1}^n \langle f_j, \wt h_{Q_j} \rangle \wt h_{Q_{n+1}}.
$$
Here $a_{K,(Q_j)} = a_{K, Q_1, \ldots ,Q_{n+1}}$ is a scalar satisfying the normalization
$$
|a_{K,(Q_j)}| \le \frac{\prod_{j=1}^{n+1} |Q_j|^{1/2}}{|K|^{n}},
$$
and there exist two indices $j_0,j_1 \in \{1, \ldots, n+1\}$, $j_0 \not =j_1$, so that $\wt h_{Q_{j_0}}=h_{Q_{j_0}}$, $\wt h_{Q_{j_1}}=h_{Q_{j_1}}$ and $\wt h_{Q_j}=h_{Q_j}^0$ if $j \not \in \{j_0, j_1\}$.

An $n$-linear dyadic paraproduct $\pi = \pi_{\calD}$ also has $n+1$ possible forms, but there is no complexity (the $k = (k_1, \ldots, k_{n+1})$) associated
to them. One of the forms is
$$
\pi(f_1,\ldots,f_n)
=\sum_{K \in \calD} a_K \prod_{j=1}^n \langle f_j \rangle_K h_K,
$$
where the coefficients satisfy the BMO condition
\begin{equation}\label{eq:BMOCondition}
\sup_{K_0 \in \calD} \Big( \frac{1}{|K_0|} \sum_{\substack{K \in \calD \\ K \subset K_0}} |a_K|^2 \Big)^{1/2}
\le 1.
\end{equation}
This is the paraproduct associated with the tuple $(1_K / |K|, \ldots, 1_K/|K|, h_K)$, and in the remaining $n$ alternative
forms the $h_K$ is in a different position.

We call shifts and paraproducts \emph{dyadic model operators (DMOs)}.
Suppose $T$ is an $n$-linear Calder\'on-Zygmund operator in $\R^d$ related to a kernel $K$. If $f_1, \ldots, f_{n+1}$ are, say, $L^{n+1}(\R^d)$ functions, then the representation theorem states that
\begin{equation}\label{eq:repthm}
\langle T(f_1, \dots, f_n), f_{n+1} \rangle
=C_T \E_\omega \sum_{k_1, \ldots, k_{n+1} =0}^\infty  \sum_u 2^{-\max k_i \alpha/2} 
\langle U^k_{\calD_\omega,u} (f_1, \dots, f_n), f_{n+1} \rangle.
\end{equation}
Here \begin{align*}
|C_T| \lesssim \sum_{m=0}^n \| T^{m*}(1, \ldots, 1) \|_{\BMO} &+ \|T\|_{\WBP} + \| K \|_{\operatorname{CZ}_\alpha} \\&\lesssim
\| T \|_{L^{n+1} \times \cdots \times L^{n+1} \to L^{(n+1)/n}}+ \| K \|_{\operatorname{CZ}_\alpha},
\end{align*}
$\alpha$ is the parameter in the H\"older continuity assumptions of the kernel of $T$,
and the sum over $u$ is finite, say, over $u=1,2, \dots, C(n,d)$.
If $\max {k_i}>0$, then $U^k_{\calD_\omega,u}$ is some dyadic shift $S_{\calD_\omega}^k$ of complexity $k$ 
with respect to the lattice $\calD_{\omega}$. If $\max k_i=0$, then $U^k_{\calD_\omega,u}$ is a shift of complexity zero or 
a paraproduct. In this sense, a CZO $T$ can be represented using DMOs. For $n=2$, a proof of this result is given by
three of us and Y. Ou in \cite{LMOV}. The $n$-linear case for general $n$, which requires certain modifications, is \cite[Theorem 6.3]{DLMV2}.
The reference \cite[Theorem 6.3]{DLMV2} is a more general theorem involving operator-valued CZOs. We note that the additional assumptions related to the
operator-valued setup, such as the $\operatorname{RMF}$ assumption, concern only the estimation of the model operators. They are not needed
for the above stated structural theorem, which has essentially the same proof in the scalar-valued and operator-valued settings.

As DMOs satisfy $L^p$ estimates in the full expected range of exponents, the $T1$ theorem follows from the representation theorem.
Our main task in this paper will be to prove $L^p$-bounds for the  extensions of $n$-linear DMOs to suitably defined tuples of $\UMD$ spaces, which we term $\UMD$ H\"older tuples and define in the subsequent section.

\section{UMD H\"older tuples and the boundedness of multilinear SIOs} \label{sec:UMDHT}
Throughout this section, and the remainder of the article, we make use of the following notational conventions. For $m \in \N$ we write $\calJ_m{\coloneqq} \{1, \dots, m\}$ and denote the set of permutations of $\mathcal J\subset \mathcal J_m$ by $\Sigma(\mathcal J)$.   We simply write $\Sigma(m)$ in place of $\Sigma(\calJ_m)$. We say that $p_1,\ldots,p_m$ is a H\"older tuple of exponents if
\begin{equation}
\label{e:holder}
1<p_1,\ldots, p_{m}<\infty, \qquad \sum_{j=1}^{m} \textstyle \frac{1}{p_j} = 1.
\end{equation} 

\subsection{UMD H\"older tuples.}

The notion of UMD H\"older tuple involves fixing    an associative algebra $\mathcal A$ over $\mathbb C$.   We denote the associative operation $  \mathcal A \times \mathcal A \to \mathcal A$ by the product notation, that is, we write $ (e,f)\mapsto ef$. In the abstract definition, we do not find useful for $\mathcal A$ itself to be endowed with a topology; on the other hand, we will work with  linear subspaces of $\mathcal A$ endowed with a Banach norm.

We  assume that there exists a subspace $\mathcal L^1 $ of $\mathcal A$ and a linear functional $\tau:\mathcal L^1\to \C$, which we refer to as \emph{trace}.

Given  an $m$-tuple $(X_1,\ldots, X_m)$ of Banach subspaces of $\mathcal A$, we construct the seminorm  
\begin{equation}
\label{e:YJnorm2}
|e|_{  Y(X_1,\ldots, X_m)} = \sup\left\{ \left|\tau\left( e \prod_{\ell=1}^{m} e_{\sigma(\ell)}\right)\right|: \sigma \in\Sigma(m), |e_j|_{  X_j}=1, j =1,\ldots,m \right\}
\end{equation}
on the subspace 
\begin{equation}
\label{e:subspace}
Y(X_1,\ldots, X_m)=
\left\{e\in \mathcal A: e\prod_{\ell=1}^{m} e_{\sigma(\ell)} \in \mathcal L^1 \, \forall \sigma \in \Sigma(m), \, e_j \in X_j,\, j=1,\ldots, m  \right\}
\end{equation}
of $\mathcal A$. 
The next lemma clarifies the intent of definition \eqref{e:YJnorm2}: if $|
\cdot|_{Z}$ is a seminorm  such that all $(m+1)$-linear forms on $X_1\times \cdots \times X_m \times Z$  in \eqref{e:YJnorm3} below are bounded, then the  $Z$-seminorm dominates the seminorm ${Y(}X_1,\ldots, X_m)$.
\begin{lem} \label{lem:max} Let   $(X_1,\ldots, X_m)$ be a $m$-tuple of Banach subspaces of $\mathcal A$. Suppose that $e\in \mathcal A$ belongs to the subspace \eqref{e:subspace}.  Then
\begin{equation}
\label{e:YJnorm3}
\left|\tau\left( e \prod_{\ell=1}^{m} e_{\sigma(\ell)}\right) \right| \leq |e|_Z \prod_{j=1}^m |e_j|_{X_j}, \qquad \forall \sigma \in \Sigma(m), \, e_j \in X_j,\, j=1,\ldots, m,
\end{equation}
holds for $|e|_Z =|e|_{{Y(}X_1,\ldots, X_m)} $. In addition, 
if   $|\cdot|_Z$ is a seminorm on $\mathcal A$ such that 
\eqref{e:YJnorm3} holds, $|e|_{{Y(}X_1,\ldots, X_m)}\lesssim |e|_Z $.
\end{lem}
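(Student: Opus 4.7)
The plan is to derive both assertions directly from the defining supremum in \eqref{e:YJnorm2}, with no deep machinery required. For the first claim, fix $\sigma \in \Sigma(m)$ and $e_j \in X_j$ for $j=1,\ldots,m$. If some $e_j=0$, then the product $e\prod_{\ell}e_{\sigma(\ell)}$ vanishes in $\mathcal{A}$ by the algebra structure, so the left-hand side of \eqref{e:YJnorm3} equals zero and there is nothing to prove. Otherwise, I would normalize by setting $\tilde e_j := e_j / |e_j|_{X_j}$, so that $|\tilde e_j|_{X_j}=1$ for every $j$. Using linearity of $\tau$ together with the bilinearity of the algebra product in each slot, one obtains
\begin{equation}
\tau\!\left(e\prod_{\ell=1}^m e_{\sigma(\ell)}\right) = \left(\prod_{j=1}^m |e_j|_{X_j}\right)\tau\!\left(e\prod_{\ell=1}^m \tilde e_{\sigma(\ell)}\right),
\end{equation}
and the right-most factor is bounded in absolute value by $|e|_{Y(X_1,\ldots,X_m)}$ since $(\tilde e_j,\sigma)$ is an admissible configuration in the supremum defining \eqref{e:YJnorm2}. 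Multiplying through proves \eqref{e:YJnorm3} with $|e|_Z=|e|_{Y(X_1,\ldots,X_m)}$.

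For the second claim, assume $|\cdot|_Z$ is any seminorm on $\mathcal{A}$ for which \eqref{e:YJnorm3} holds. Specializing that inequality to admissible configurations $\sigma\in\Sigma(m)$ and $e_j\in X_j$ with $|e_j|_{X_j}=1$, the right-hand side reduces to $|e|_Z$. Taking the supremum of the left-hand side over all such $\sigma$ and unit vectors reproduces precisely $|e|_{Y(X_1,\ldots,X_m)}$ by \eqref{e:YJnorm2}, yielding $|e|_{Y(X_1,\ldots,X_m)}\leq |e|_Z$; in particular the ``$\lesssim$'' claim holds with implicit constant $1$.

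There is no substantive obstacle: the lemma is essentially a duality-style reformulation of the definition of $Y(X_1,\ldots,X_m)$, asserting that its seminorm is simultaneously (i) an upper bound in the $(m{+}1)$-linear form \eqref{e:YJnorm3}, via rescaling to unit vectors, and (ii) the smallest such upper bound, by testing against unit vectors. The only minor point to be careful about is the well-definedness of each trace pairing, which is precisely guaranteed by the hypothesis $e\in Y(X_1,\ldots,X_m)$ in the sense of \eqref{e:subspace}, so that $e\prod_{\ell}e_{\sigma(\ell)}\in\mathcal{L}^1$ for every admissible $\sigma$ and $e_j$.
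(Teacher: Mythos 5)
Your proof is correct and is exactly the "immediate from the definitions" argument the paper intends: homogeneity of the trace pairing reduces the first inequality to unit vectors, and testing \eqref{e:YJnorm3} on unit vectors and taking the supremum from \eqref{e:YJnorm2} gives the second claim with constant $1$. No gaps; the well-definedness remark about $e\prod_\ell e_{\sigma(\ell)}\in\mathcal L^1$ is the right point to flag and is indeed covered by the hypothesis $e\in Y(X_1,\ldots,X_m)$.
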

\begin{proof} Immediate from the definitions.
\end{proof}

\begin{defn}[Admissible spaces]
 \label{def:YJa} We say that a Banach subspace     $X$ of $\mathcal A $ is admissible if  
$Y(X ) 
$ from \eqref{e:subspace} 
  is a Banach space  with respect to    $| \cdot|_{Y(X)}$ of \eqref{e:YJnorm2}\footnote{This includes that if $y\in Y(X)$ then $|y|_{Y(X)}<\infty$.},  
the map 
\begin{equation}
\label{e:mapdual}
y\in Y(X ) \mapsto  x^*[y] \in X^*, \qquad x^*[y](x)=\tau(yx), \quad x\in X,
\end{equation}
is onto, and furthermore, for each $  x\in X, \  y\in Y(X)$, $xy\in \mathcal L^1$ and 
\begin{equation}\label{e:commtrace}
\tau(xy)=\tau(yx).
\end{equation}
\end{defn}
\begin{rem}\label{rem:YJ} 
If $X$ is admissible, then  the map  \eqref{e:mapdual} is an isometric bijection from  $Y(X)$ onto $X^*$. We are thus allowed to identify   
$Y(X)$ with $X^*$ via \eqref{e:mapdual} and we do so without explicit mention  from now on. 
Notice that if $X$ is admissible, then $X$ is a $\UMD$ space if and only if $Y(X)$ is.
\end{rem}

For our purposes, it is convenient to state the next observation in the form of a lemma.

\begin{lem} \label{lem:YJ}
Let $X$ be admissible and reflexive. If $Y(X)$ is also admissible, then $Y(Y(X))=X$ as sets and $|x|_{Y(Y(X))}= |x |_X$ for all $x \in X$.
\end{lem}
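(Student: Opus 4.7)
The plan is to use the admissibility hypotheses for both $X$ and $Y(X)$ to produce isometric dualities, then use reflexivity of $X$ to collapse the chain $X \cong X^{\ast\ast} \cong Y(X)^{\ast} \cong Y(Y(X))$ back onto $X$ itself. Concretely, applying Remark \ref{rem:YJ} to $X$ yields that $\Psi : Y(X) \to X^{\ast}$, defined by $\Psi(y)(x) \coloneqq \tau(yx)$, is an isometric bijection; applying the same remark to $Y(X)$ (permissible by the admissibility assumption on $Y(X)$) yields that $\Phi : Y(Y(X)) \to Y(X)^{\ast}$, defined by $\Phi(e)(y) \coloneqq \tau(ey)$, is also an isometric bijection.

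First I would check the easy inclusion $X \subset Y(Y(X))$ inside $\mathcal A$: for $x \in X$ and $y \in Y(X)$, admissibility of $X$ provides $xy \in \mathcal L^1$, which is precisely the defining membership condition \eqref{e:subspace} for $Y(Y(X))$ in the case $m = 1$. The heart of the proof is then the identity
\[
\Phi(x)(y) = \tau(xy) = \tau(yx) = \Psi(y)(x) = \iota(x)\bigl(\Psi(y)\bigr) = \bigl( \Psi^{\ast} \circ \iota \bigr)(x)(y), \qquad x \in X, \ y \in Y(X),
\]
where $\iota : X \to X^{\ast\ast}$ is the canonical embedding and the second equality is the trace-commutativity identity \eqref{e:commtrace}. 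Hence $\Phi|_X = \Psi^{\ast} \circ \iota$ as maps $X \to Y(X)^{\ast}$. Reflexivity makes $\iota$ an isometric bijection onto $X^{\ast\ast}$, and $\Psi^{\ast}$ is an isometric bijection as the Banach adjoint of one; therefore $\Phi|_X$ is an isometric bijection from $X$ onto $Y(X)^{\ast}$.

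Combining this with the fact that $\Phi$ itself is an isometric bijection from $Y(Y(X))$ onto $Y(X)^{\ast}$ extending $\Phi|_X$, surjectivity of $\Phi|_X$ lets me write every $e \in Y(Y(X))$ as $\Phi(e) = \Phi(x)$ for some $x \in X$, and injectivity of $\Phi$ then forces $e = x$ inside $\mathcal A$. This yields the set equality $X = Y(Y(X))$, and the norm equality $|x|_{Y(Y(X))} = |x|_X$ is immediate from $\Phi|_X$ being an isometry. The principal obstacle I anticipate is precisely this last step of upgrading an abstract isometric identification with $Y(X)^{\ast}$ to an equality of concrete subspaces of $\mathcal A$; the trace-commutativity axiom \eqref{e:commtrace} is what makes the two landing maps $\Phi|_X$ and $\Phi$ compatible, and hence what forces $X$ to exhaust $Y(Y(X))$.
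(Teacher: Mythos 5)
Your proof is correct and follows essentially the same route as the paper's: both establish $X\subset Y(Y(X))$ from the definitions, build the chain of isometric identifications $X\cong X^{**}\cong Y(X)^{*}\cong Y(Y(X))$ via admissibility and reflexivity, and then use the trace identity \eqref{e:commtrace} together with injectivity of the duality map $e\mapsto\tau(e\,\cdot)$ on $Y(Y(X))$ to force the abstract isomorphism to be the inclusion. The only difference is organizational: the paper composes the three isomorphisms into a map $X\to Y(Y(X))$ and checks it is the identity, while you check that $\Phi|_X=\Psi^{*}\circ\iota$ and conclude by surjectivity and injectivity, which is the same argument.
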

\begin{proof} 
The reflexivity of $X$ and Remark \ref{rem:YJ} imply that $Y(Y(X))$ is isometrically isomorphic with $X$. Here we want to show that they are
actually equal as sets with equal norms.
Denote $Y{\coloneqq}Y(X)$ and $Z{\coloneqq} Y(Y)$. It follows quite directly from the definitions that $X$ is a subset of $Z$.

Let $\varphi \colon X^* \to Y$ be the isometric isomorphism from the definition of the admissibility of $X$. This induces the isometric isomorphism
$\phi \colon X^{**} \to Y^*$ defined by 
$$
\phi(x^{**})(y)
{\coloneqq} x^{**}(\varphi^{-1}(y)),
$$
where $x^{**} \in X^{**}$ and $y \in Y$. Since $X$ is reflexive and $Y$ is admissible, we have the canonical isometric isomorphism 
$\rho \colon X \to X^{**}$ and the isometric isomorphism $\eta \colon Y^* \to Z$. Now, the composition $\eta \circ \phi \circ \rho \colon X \to Z$ is an isometric isomorphism.

Suppose $x \in X$ and denote $z{\coloneqq}\eta \circ \phi \circ \rho (x)$.
Let $y \in Y$. Then we have that
$$
\tau(zy)
= \eta^{-1}(z)(y)
=\phi^{-1} \circ \eta^{-1}(z) ( \varphi^{-1}(y))
=\varphi^{-1}(y)(\rho^{-1} \circ \phi^{-1} \circ \eta^{-1}(z))
= \tau(xy).
$$
Since $x$ and $z$ are both elements of $Z$, the fact that $\tau(zy)=\tau(xy)$ for all $y \in Y$ implies that $x=z$. 
Thus, the isometric isomorphism
$\eta \circ \phi \circ \rho \colon X \to Z$ is actually the identity map.
\end{proof}

If $X, X_1, \dots, X_m$ are Banach spaces we write $X=Y(X_1, \dots, X_m)$ to mean that $X$ and $Y(X_1, \dots, X_m)$ coincide as sets,
$Y(X_1, \dots, X_m)$ is a Banach space with the norm $| \cdot |_{Y(X_1, \dots, X_m)}$, and that the norms are equivalent, that is,
$|x|_X \sim |x|_{Y(X_1, \dots, X_m)}$ for all $x \in X$.

We turn to defining  $\UMD$ H\"older $m$-tuples relatively to $\mathcal A$, $\tau$. We first do so for $m=2$.

\begin{defn}[$\UMD$ H\"older pair] \label{defn:productsys0} 
Let $X_1$,  $X_2$ be admissible spaces. We say that $\{X_1,X_2\}$ is a \emph{$\UMD$ H\"older pair}  
if $X_1$ is a $\mathrm{UMD}$ space and $X_2={Y(}X_1)$. 
In view of Remark \ref{rem:YJ} and 
Lemma   \ref{lem:YJ} one can equivalently say that  $\{X_1,X_2\}$ is a $\UMD$ H\"older pair if 
$X_2$ is a $\mathrm{UMD}$ space and  $X_1={Y(}X_2)$.
\end{defn}

For $m\geq 3$ the definition of a $\UMD$ H\"older $m$-tuple is given inductively on $m$ as follows.
\begin{defn}[{$\UMD$ H\"older $m$-tuple, $m\geq 3$}] \label{defn:productsys1} 
Let $X_1,\ldots, X_m$ be admissible spaces. We say that $\{X_1,\ldots, X_{m}\}$ is a \emph{$\UMD$ H\"older $m$-tuple} if
  the following  properties hold.
\vskip2mm \noindent  \textbf{P1.} For all $j_0\in \mathcal J_m$ there holds
\[
X_{j_0}=Y\left(\left\{X_{j}: j\in \mathcal J_{m}\setminus\{j_0\}\right\}\right).
\]
\vskip2mm \noindent \textbf{P2.}
If $1\leq k \leq m-2$ and $\mathcal J=\{j_1<j_2<\cdots <j_k\}\subset \mathcal J_m$, then $Y(X_{j_1}, \dots, X_{j_k})$ is an admissible 
Banach space with the norm \eqref{e:YJnorm2}  and
\begin{equation}\label{eq:SubHolder}
\{X_{j_1}, \dots, X_{j_k}, Y(X_{j_1}, \dots, X_{j_k})\}
\end{equation}
is a $\UMD$ H\"older $(k+1)$-tuple.
\end{defn}

The following remark is an important consequence of the definition. 
\begin{rem}\label{rem:dual} 
Let  $m \ge 3$ and $\{X_1,\ldots, X_{m}\}$   be a $\UMD$ H\"older $m$-tuple.  
Then according to P2 the pair $\{X_{j_0},{Y(}X_{j_0})\}$ is a  $\UMD$ H\"older pair, which by Definition \ref{defn:productsys0}
implies that $X_{j_0}$ and $Y(X_{j_0})$ are $\UMD$ spaces. The inductive nature of the definition then ensures that  
each $Y(X_{j_1}, \dots, X_{j_k})$ appearing in \eqref{eq:SubHolder}  is a $\UMD$ space.
\end{rem}

\begin{rem}\label{rem:ProdInL1}
Let $m \ge 2$ and   $\{X_1, \dots, X_m\}$ be  a $\UMD$ $m$-H\"older tuple.  
Let $e_j \in X_j$ for $j \in \calJ_{m}$. For each $\si \in \Sigma(m)$,
as $X_{\si(1)}=Y(X_{\si(2)}, \dots , X_{\si(m)})$, we necessarily have
$\prod_{j=1}^{m} e_{\si(j)} \in \calL^1$ and 
$$
|\tau(e_{\si(1)} \cdots e_{\si(m)}) |
\le |e_{\si(1)}|_{Y(X_{\si(2)}, \dotsm, X_{\si(m)})}\prod_{j=2}^{m} | e_{\si(j)} |_{X_{\si(j)}}
= \prod_{j=1}^{m} | e_{j} |_{X_{j}}.
$$
\end{rem}

We clarify the extent of our definition with some examples of $\UMD$ H\"older tuples. 
\begin{exmp} It is immediate to verify that the $m$-tuple $X_j=\mathbb C$, $j=1,\ldots, m$, is a $\UMD$ H\"older $m$-tuple with respect to the usual product.
\end{exmp}

The next example is of relevance if one wants to deduce Theorem \ref{thm:X} in the basic case $X_1 = Y_3 = X$ and $X_2 = \C$ from Theorem \ref{thm:scal}.
However, otherwise we do not need it, and Theorem \ref{thm:X} is best seen mimicking our main proofs.
\begin{exmp}\label{ex:tensoralgebra}
\label{ex:Thmx}  Let $X=X_1$ be a complex $\UMD$ space and denote $X_2=X^*$. The goal of this example is to show that for each $m\geq 2$ the tuple $\{
X_1, X_2,\ldots, X_m\}$ with $X_j=\mathbb C$ for $2<j\leq m$ is a $\UMD$ H\"older tuple. This is conceptually simple but requires some work in order to define a suitable enveloping algebra $\mathcal A$. We let $ V=  X\oplus X^*$, and define 
$\mathcal A$ to be the tensor algebra over $V$, namely
\[
\mathcal A = \bigoplus_{k=0}^\infty V^{\otimes k}.
\]
We let \[
\mathcal L^1=\mathrm{span}\{e \otimes e^* + f^* \otimes f,\,  e,f \in X,\,  e^*,f^*\in X^*\};\] notice that this is a linear subspace of $V^{\otimes2}$. We then define the functional     $\tau $    by
\[
\tau \big(e \otimes e^* + f^* \otimes f \big) = \langle f^*,  e \rangle + \langle e^*,  f \rangle
\]
for $e,f\in X$, $e^*,f^*\in X^*$ and extend it to all of $\mathcal L^1$ by linearity.
We notice that the definition \eqref{e:subspace} yields that
\[
Y(X_{j_1}, \dots, X_{j_k})= \begin{cases}
X  & 1 \notin   \{j_1\ldots, j_k\},\, 2   \in  \{j_1\ldots, j_k\},\\
X^* & 1  \in   \{j_1\ldots, j_k\},\, 2 \not\in  \{j_1\ldots, j_k\}, \\
\mathbb C &\{1,2\} \subset \{j_1\ldots, j_k\} \; \textrm{or}\;  \{1,2\}\cap \{j_1\ldots, j_k\}=\varnothing.
\end{cases}
\]
With this information in hand, we learn that $X,X^*, \mathbb C$ are admissible spaces.  Proceeding by induction on $m$, we then easily verify that $\{
X_1, X_2,\ldots, X_m\}$ is a $\UMD$ H\"older tuple.
\end{exmp}

We now start explaining how non-commutative $L^p$ spaces fit our abstract framework.

\begin{exmp}\label{ex:ncex} Consider a von Neumann algebra $\mathcal M\subset \mathcal B(H)$, namely a self-adjoint unital subalgebra of the algebra of bounded linear operators on a complex Hilbert space $H$ which is closed in the weak operator topology \cite{PB,PX}.  Let $\mathcal M_+ =\{A\in \mathcal M: \langle Ah,h\rangle \geq 0 \, \forall h \in H\}$ denote  the positive part of $\mathcal M$. A \emph{trace} $\tau$ is a  functional $\mathcal M_+\to [0, \infty]$ satisfying
\[
\tau(A+\lambda B) = \tau (A) +\lambda \tau(B), \qquad \forall A, B\in \mathcal M_+, \lambda >0
\]
as well as 
 the  tracial property \begin{equation}
\label{e:trace0}
\tau (AA^*) = \tau(A^*A) 
\end{equation}
for all $A  \in \mathcal M$. Following \cite{PX}, we assume $\tau$ is \emph{normal, semifinite, faithful} (n.s.f.) and define    the corresponding space of \emph{measurable operators} $\mathcal A=L^0(\mathcal M)$ equipped with convergence in measure: a detailed definition is in \cite{PX}. Then $\mathcal A$ is a (metrizable) topological  $*$-algebra and $\mathcal M $ is dense in $\mathcal A$. 
We will also  recall the notion of $\mathcal S_+,\mathcal S $ as introduced in    \cite[p.1463]{PX}: $\mathcal S_+$ is the cone of those $A\in \mathcal M_+$ such that $\tau(\mathrm{supp}\, A)<\infty,$ where $\mathrm{supp}\, A$ is the least projection $P\in \mathcal M_+$ with $PA=A$, and $\mathcal S\subset \mathcal M$ is the linear span of $\mathcal S_+$. We note \cite[Proposition 1.15(ii)]{Xu} that $\tau$ may be extended to a unique  linear functional on $\mathcal S$, satisfying 
\begin{equation}
\label{e:trace-1}
\tau(A^*) = \overline{\tau(A)}, \qquad 
\tau(AB)=\tau(BA), \qquad \forall A,B\in \mathcal S.
\end{equation}
For $1\leq p<\infty$, we call \emph{noncommutative $L^p$ space} 
 the Banach subspace of $\mathcal A$ obtained by completion of  $\mathcal S$   with respect to the norm
\[
\|A\|_{L^p(\mathcal M)} = \left[\tau \left(\left(A^\star A  \right)^{\frac p2}\right) \right]^{\frac1p}, \qquad 1\leq p <\infty.
\]
In fact, we record   the characterization
\[
L^p(\mathcal M)=\left\{A\in \mathcal A: \tau \left((A^\star A )  )^{\frac p2}\right)<\infty\right\};
\]
in the above equality, $\tau$ denotes the extension of the trace to the positive part of $\mathcal{A}$ defined via generalized singular numbers \cite{PX}.
We also point out  the H\"older inequality
\begin{equation}
\label{e:holdernc}
\|\xi_1 \xi_2\|_{L^p(\mathcal M)} \leq \|\xi_1  \|_{L^{p_1}(\mathcal M)} \| \xi_2\|_{L^{p_2}(\mathcal M)}, 
\qquad \textstyle \frac{1}{p} =\frac{1}{p_1}+\frac{1}{p_2}
\end{equation}
valid whenever $1\leq p_1,p_2,p<\infty$. A suitable substitute holds for $p=\infty$ if the ${L^p(\mathcal M)} $-norm is replaced by the $\mathcal B(H)$-norm.
 Furthermore, notice that $\tau$ may be extended from $\mathcal S$ to a unique linear bounded functional on $L^1(\mathcal M)$ satisfying
\[
|
\tau(A)| \leq \|A\|_{L^1(\mathcal M)}.
\]
The tracial property \eqref{e:trace-1} extends to the following:  if $A,B \in \calA$ are such that $A\in L^p(\calM)$ and $B\in L^{p'}(\calM)$,  then
\begin{equation}
\label{e:trace}
\tau(AB)=\tau(BA).
\end{equation}
This is the concrete equivalent of property \eqref{e:commtrace} we   assumed in the abstract setup. We refer to \cite[Rem.\ 1.2.11]{Xu} for the details of \eqref{e:trace}. 

For $1<p<\infty$, we then have 
$
L^p(\mathcal M)^* = L^{p'}(\mathcal M)
$
with isometric isomorphism given by the Riesz representation map 
\[
\lambda \in L^p(\mathcal M)^* \mapsto B_\lambda\in  L^{p'}(\mathcal M) , \qquad \lambda(A)= \tau(B_\lambda A) \quad\forall A\in L^p(\mathcal M) .\] 
\emph{A fortiori},   $L^p(\mathcal M)$ is reflexive for $1<p<\infty$. For our purposes, it is also important to observe that $L^p(\mathcal M)$ is a $\UMD$ space in the same range \cite[Corollary 7.7]{PX}.
We detail below two concrete examples of von Neumann algebras equipped with a n.s.f.\ trace. 

  If  $\mathcal M$ is an abelian von Neumann algebra, then $\mathcal M=L^\infty(M,\mu)$ for some measure space $(M,\mathcal \mu)$, a n.s.f. trace is obtained by integration with respect to the measure $\mu$, and $\mathcal A=L^0(M,\mu)$, the topological $*$-algebra of measurable functions on $M$ with respect to convergence in measure. Then  $L^p(\mathcal M)=L^p(M,\mu)$ for $1\leq p <\infty.$

If   $\mathcal M =\mathcal B(H)$, the bounded linear operators over a  separable Hilbert space $H$ and 
\[
\tau(A) = \sum_{j=1}^\infty \langle A e_i, e_i\rangle 
\]
where $e_i$ is any orthonormal basis of $H$ \cite[Example (ii), p.\ 1465]{PX}, then  the spaces $L^p(\mathcal M)$ are   referred to as \emph{Schatten-von Neumann classes} and denoted by $S^p.$

Let now $p_j$, $j=1,\ldots, m$ be a H\"older tuple as in  \eqref{e:holder}. We claim that  $X_j=L^{p_j}(\mathcal M)$ is     a $\UMD$ H\"older tuple relative to the   algebra $\mathcal A=L^0(\mathcal M)$, with trace $\tau$. This can be proved by induction on $m$, relying on the equality
\[
 L^{p(\mathcal J)} ( \mathcal M)= Y(\{L^{p_j}(\mathcal M):j\in \calJ\}), \qquad {  \frac{1}{p(\mathcal J)} }= 1-  \sum_{j \in \mathcal J}  \frac{1}{p_j}
\]
valid for each $\varnothing \subsetneq\mathcal J\subsetneq  \mathcal J_m$, 
whose verification is immediate and left to the reader.
\end{exmp}

\begin{exmp}  \label{ex:thmXj} In  Appendix \ref{a:1}, we prove that if   $\{p_j^{s}:1\leq j\leq m\}$ are H\"older tuples of exponents as in \eqref{e:holder} for $s=0,\ldots,S$, $\mathcal M$ is  a von Neumann algebra  with  n.s.f.\ trace $\tau$ as in Example \ref{ex:ncex}, and $(M_s,\mu_s) $ are    $\sigma$-finite Borel measure spaces for $s=1,\ldots,S$, the tuple of spaces
\begin{equation}
\label{e:exumd}
X_j = L^{p_j^S} (M_S,\mu_S; L^{p_j^{S-1}} (M_{S-1},\mu_{S-1};\cdots L^{p_j^{1}} (M_{1},\mu_{1}; L^{p_j^{0}}(\mathcal M))\cdots)
\end{equation}
is a $\UMD$ H\"older $m$-tuple relative to the   trace
\[
f\mapsto \int\displaylimits_{  M_1 \times\cdots \times M_{S}} \tau (f(t_1,\ldots, t_{S})) \, \mathrm{d} \mu_1 \times\cdots \times \mu_S (t_1,\ldots,t_S).
\]
A precise statement is provided in Proposition \ref{p:a1}.
\end{exmp}

\subsection{Extensions of CZOs}
If $X$ is a Banach space we will use the notation  $L^\infty_c\otimes X$ for functions of the type 
$
\sum_{i=1}^{N} f_{i} e_{i},
$
where $N \in \N$, $f_{i} \in L^\infty_c(\R^d)=:L^\infty_c$ and $e_{i} \in X$.

Let $\{X_1,\ldots, X_{n+1}\}$ be a $ \UMD$ H\"older tuple where $n \ge 1$.
Suppose  $T_0$ is an $n$-linear CZO  with a  kernel $K_0$ as defined in Section \ref{sec:MultiSing}. 
Since we know that $T_0$ is a bounded operator, see \eqref{eq:CZOBdd}, we know that $\langle T_0(f_1, \dots, f_n), f_{n+1} \rangle$
makes sense for $f_j \in L^\infty_c$. 
We define the corresponding $(n+1)$-linear form  
\begin{equation}
\label{e:scalemb}
\begin{split}
&\Lambda_{T_0} :L^\infty_c\otimes X_1 \times \cdots \times L^\infty_c\otimes X_{n+1} \to \C, \\
&\Lambda_{T_0}(f_1,\ldots,f_{n+1})=   \sum_{a_1, \ldots, a_{n+1}}   \langle T_0(f_{1, a_1}, \ldots, f_{n, a_n}), f_{n+1, a_{n+1}}\rangle \tau \Big (  \prod_{j=1}^{n+1} e_{j, a_j} \Big),
\end{split}
\end{equation}
where $f_j=\sum_{a=1}^{N_j} f_{j,a}e_{j,a}$. If $U$ is a dyadic model operator as in Section \ref{sec:MultiSing} 
we define the form $\Lambda_U$ in the corresponding way. 
We can also make sense of $\Lambda_U$ more directly. For example, if $U$ is a dyadic shift as in \eqref{e:shift}, then
\begin{equation}\label{e:shiftforms}
\Lambda_{U}(f_1,\ldots,f_{n+1})
= \sum_{K \in \calD}
\sum_{\substack{Q_1, \dots, Q_{n+1} \in \calD \\ Q_j^{(k_j)}=K}}
a_{K,(Q_j)}\tau\Big(\prod_{j=1}^{n+1} \langle f_j, \wt h_{Q_j} \rangle \Big).
\end{equation}

\begin{rem}
We chose to utilize the identity permutation in $\Sigma(n+1)$ for the product appearing in \eqref{e:scalemb}. However, the notion of being a $\UMD$ H\"older tuple is clearly invariant under reordering of $\{X_1,\ldots, X_{n+1}\}$ . 
\end{rem}

Let $p_j \in (1, \infty)$ for $j \in \calJ_{n+1}$ be such that 
$\sum_{j=1}^{n+1} 1/p_j=1$. From Theorem \ref{thm:scal} it will follow among other things
that
\begin{equation}\label{eq:ForAdjoints}
| \Lambda_{T_0}(f_1, \dots, f_{n+1}) | 
\lesssim \prod_{j=1}^{n+1} \| f_j \|_{L^{p_j}(X_j)}.
\end{equation}
Based on this boundedness one can define as usual $n+1$ adjoint operators.
Let us describe how the adjoints look like in our H\"older tuple set up. 

Fix $j_0\in \mathcal J_{n+1}$ and $f_j\in L^{p_j}( X_j)$ for $j\in \mathcal J_{n+1}\setminus\{j_0\}$. Consider the linear functional
\begin{equation}\label{eq:DefOfAdjoint}
f_{j_0}\in L^{p_{j_0}}( X_{j_0}) \mapsto \Lambda_{T_0}(f_1,\ldots,f_{n+1}),
\end{equation}
which is bounded because of \eqref{eq:ForAdjoints}.
Recall that $L^{p_{j_0}}(X_{j_0})^*$  is identified with  $L^{(p_{j_0})'}(Y(X_{j_0}))$ with duality pairing
\[
\langle g,f_{j_0} \rangle = \int_{\R^d} \tau( g(x) f_{j_0}(x)) \ud x.
\] 
Therefore, there exists a function 
$$
T^{*j_0}(f_j:j \in \calJ_{n+1} \setminus \{j_0\})
{\coloneqq}T^{j_0*}(f_1, \dots, f_{j_0-1}, f_{j_0+1}, \dots, f_{n+1}) \in L^{(p_{j_0})'}(Y(X_{j_0}))
$$ 
so that  
\[
\Lambda_{T_0}(f_1,\ldots,f_{n+1})= \int_{\R^d} \tau( T^{*j_0}(f_j:j \in \calJ_{n+1} \setminus \{j_0\})(x)  f_{j_0}(x)) \ud x.
\]
The $n$-linear bounded operator 
$$
T^{j_0*} \colon L^{p_1}(X_1) \times \dots \times L^{p_{j_0-1}}(X_{j_0-1}) 
\times L^{p_{j_0+1}}(X_{j_0+1}) \times \dots \times L^{p_{n+1}}(X_{n+1})
\to L^{(p_{j_0})'}(Y(X_{j_0}))
$$ 
is one of the adjoint operators.
In the same way one can define the adjoint $T_0^{j_*}$ of $T_0$ so that
$$
\langle T_0^{j_0*}(g_1, \dots, g_{j_0-1}, g_{j_0+1}, \dots, g_{n+1}), g_{j_0} \rangle
= \langle T_0(g_1, \dots, g_n), g_{n+1} \rangle,
$$
where $g_j \in L^{p_j}$.

Suppose $f_j =\sum_{a=1}^{N_j} f_{j,a} e_{j,a} \in L^\infty_c \otimes X_j$ for $j \in \calJ_{n+1} \setminus \{j_0\}$.
A calculation involving the invariance of $\tau$ under cyclic permutations  yields that
\begin{equation}\label{e:scalembadj}
\begin{split}
T^{*j_0}&(f_j:j \in \calJ_{n+1}\setminus\{j_0\})\\
&=\sum 
T_0^{j_0*}(f_{j, a_j}:j \in \calJ_{n+1}\setminus\{j_0\}) e_{j_0+1, a_{j_0+1}} \cdots e_{n+1, a_{n+1}} \cdots e_{j_0-1, a_{j_0-1}}.
\end{split}
\end{equation}

\subsection{Sparse domination of dyadic operators}\label{ss:sparse}
The following basic sparse domination result, Lemma \ref{lem:SparseModel}, was first proved by Culiuc, Ou and one of us in the linear scalar-valued setting in  \cite{CUDPOULMS,CDPOMRL} and recast by Y.\ Ou and three of us in the multilinear scalar-valued case  \cite{LMOV}. The proof in our current Banach-valued setting is completely analogous. 

Let $\eta \in (0,1)$. We say that a collection $\calS$ of cubes in $\R^d$ (not necessarily dyadic) is $\eta$-sparse (or just sparse)
if for every $Q \in \calS$ there exists a set $E_Q \subset Q$ with $|E_Q| > \eta |Q|$ so that the sets $E_Q$, $Q \in \calS$,
are pairwise disjoint.

\begin{lem}\label{lem:SparseModel}
Let $n \geq1$,  $\{X_1,\ldots, X_{n+1}\}$ be a $ \UMD$ H\"older tuple, $\calD$ be a dyadic grid,  $k=(k_1, \dots, k_{n+1})$, $0 \le k_i \in \Z$. Suppose that  the scalars $a_{K,(Q_j)}$ satisfy  the normalization
$$
|a_{K,(Q_j)}| \le A_1 \prod_{j=1}^{n+1} |Q_j|^{1/2}|K|^{-n}
$$
and we are given scalar functions  $u_{j,Q} = \sum_{Q' \in \ch(Q)} c_{j,Q'} 1_{Q'}$ 
satisfying $|u_{j,Q}| \le |Q|^{-1/2}$.

If there exists a H\"older tuple   $p_1, \dots, p_{n+1}$  as in \eqref{e:holder} such that the forms
$$
U_{\calD'}(g_1, \ldots, g_{n+1}) 
{\coloneqq} \sum_{K \in \calD'} \sum_{\substack{Q_1, \dots, Q_{n+1} \in \calD \\ Q_i^{(k_i)}=K}}
a_{K,(Q_i)} \tau\Big( \prod_{j=1}^{n+1} \langle g_j, u_{j,Q_j} \rangle \Big), \qquad \calD' \subset \calD,
$$
satisfy
$$
\sup_{\calD'\subset \calD}
|U_{\calD'}(g_1, \ldots, g_{n+1}) | \le A_2 \prod_{j=1}^{n+1} \|g_j\|_{L^{p_j}(\R^d;X_j)}, \qquad g_j  \in L^\infty_c(\R^d; X_j), j=1,\ldots,n+1,
$$
then for each tuple   $f_j \in L^\infty_c(X_j)$, $j=1,\ldots, n+1$,  and $\eta>0$
there exists an $\eta$-sparse collection $\mathcal{S}=\mathcal{S}((f_j), \eta)\subset \calD$ such that 
\begin{equation}\label{SparseForDMOs}
|\langle U_{\calD}(f_1, \ldots, f_n), f_{n+1}\rangle| 
\lesssim_{\eta} (A_1+A_1\kappa + A_2) \sum_{Q\in\mathcal{S}}|Q|\prod_{j=1}^{n+1}\langle |f_j|_{X_j} \rangle_Q,
\end{equation}
where $\kappa = \max k_m$.
\end{lem}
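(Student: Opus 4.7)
The plan is to follow the standard outer iterative scheme for sparse domination of dyadic forms, as developed in \cite{CUDPOULMS,CDPOMRL} and adapted to the multilinear scalar case in \cite{LMOV}; the Banach-valued setup here only alters the role played by the Hölder inequality in estimating the products, which is now encoded by Remark \ref{rem:ProdInL1} and the assumed quantitative bound $A_2$.

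\emph{Step 1: Principal cubes.} By a monotone convergence argument one may assume $f_1,\ldots,f_{n+1}\in L^\infty_c\otimes X_j$ and the form $U_\calD$ is already restricted to a fixed top cube $Q_0\in\calD$ large enough to contain all supports. Define a principal family $\calP$ recursively starting from $Q_0$: given $P\in\calP$, the children of $P$ in $\calP$ are the maximal $Q\subsetneq P$, $Q\in \calD$, satisfying
\[
\prod_{j=1}^{n+1}\langle|f_j|_{X_j}\rangle_Q> 2^{d+2} \prod_{j=1}^{n+1}\langle|f_j|_{X_j}\rangle_P.
\]
A Chebyshev-type argument yields $\sum_{Q\in\ch_\calP(P)}|Q|\leq \tfrac12|P|$, so $\calP$ is sparse, and we will  take $\calS=\calP$. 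Introduce the stopping parent $\pi(Q)=\min\{P\in\calP:Q\subset P\}$ for $Q\in\calD(Q_0)$.

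\emph{Step 2: Splitting into in-stopping and boundary parts.} For each $P\in\calP$ split
\[
\Lambda_P{\coloneqq}\sum_{\substack{K\in\calD\colon \pi(K)=P}}\sum_{\substack{Q_1,\ldots,Q_{n+1}\in\calD\\ Q_i^{(k_i)}=K}} a_{K,(Q_j)}\tau\Big(\prod_{j=1}^{n+1}\langle f_j,u_{j,Q_j}\rangle\Big) = \Lambda_P^{\inside}+\Lambda_P^{\partial},
\]
where $\Lambda_P^{\inside}$ keeps only those summands with $\pi(Q_j)=P$ for all $j=1,\ldots,n+1$, and $\Lambda_P^\partial$ collects the rest. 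Then
$\langle U_\calD(f_1,\ldots,f_n),f_{n+1}\rangle=\sum_{P\in\calP}(\Lambda_P^{\inside}+\Lambda_P^{\partial})$, and it suffices to prove that each $P$-summand is bounded by $(A_1+A_1\kappa+A_2)|P|\prod_{j=1}^{n+1}\langle|f_j|_{X_j}\rangle_P$.

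\emph{Step 3: The in-stopping piece $\Lambda_P^{\inside}$.} Set $f_j^P{\coloneqq}f_j\mathbf 1_{\{\pi=P\}}$. Because stopping children cannot violate the Chebyshev threshold, $|f_j^P|_{X_j}\lesssim \langle|f_j|_{X_j}\rangle_P$ outside the stopping children, so the John--Nirenberg type bound $\|f_j^P\|_{L^{p_j}(X_j)}\lesssim \langle|f_j|_{X_j}\rangle_P|P|^{1/p_j}$ holds for all $1<p_j<\infty$. Observe that $\Lambda_P^{\inside}=U_{\calD'}(f_1^P,\ldots,f_{n+1}^P)$ for the subcollection $\calD'{\coloneqq}\{K\in\calD\colon\pi(K)=P\}$; invoking the hypothesis on the form with constant $A_2$ for the tuple $(p_j)$ gives
\[
|\Lambda_P^{\inside}|\le A_2\prod_{j=1}^{n+1}\|f_j^P\|_{L^{p_j}(X_j)}\lesssim A_2 |P|\prod_{j=1}^{n+1}\langle|f_j|_{X_j}\rangle_P.
\]

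\emph{Step 4: The boundary piece $\Lambda_P^{\partial}$.} This is the main obstacle and is where the linear growth in $\kappa$ appears. If $\pi(K)=P$ but $\pi(Q_j)\ne P$ for some $j$, then a stopping child $P'$ of $P$ strictly contains $Q_j$, hence $Q_j$ sits in the $\kappa$ dyadic generations separating $K$ from its $k_j$-level descendants and straddles the boundary of some $P'$. Using $|u_{j,Q_j}|\leq|Q_j|^{-1/2}$ and  $|a_{K,(Q_j)}|\leq A_1\prod_j |Q_j|^{1/2}|K|^{-n}$, and then invoking Remark \ref{rem:ProdInL1} together with the uniform bound $|f_j|_{X_j}\lesssim\langle|f_j|_{X_j}\rangle_P$ on $P\setminus\bigcup\ch_\calP(P)$ and the Chebyshev property on each stopping child, one estimates each fixed $K$ summand by $A_1|K|\prod_j\langle|f_j|_{X_j}\rangle_P$ times a factor $\mathbf 1_{\{K\supsetneq P'\text{ for some }P'\in\ch_\calP(P)\}}$. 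Summing geometrically over $K$ at distance at most $\kappa$ above the stopping children, using $\sum_{P'\in\ch_\calP(P)}|P'|\leq\tfrac12|P|$ once for each of the $\kappa+1$ possible scales, yields
\[
|\Lambda_P^{\partial}|\lesssim A_1(\kappa+1)|P|\prod_{j=1}^{n+1}\langle|f_j|_{X_j}\rangle_P.
\]
Summing over $P\in\calP=\calS$ finishes the argument. The hard kernel of the proof is the boundary accounting in Step 4, where one must be careful that the complexity contributes only linearly rather than exponentially; this is precisely the same phenomenon that governs shift bounds and justifies the $A_1+A_1\kappa+A_2$ dependence in \eqref{SparseForDMOs}.
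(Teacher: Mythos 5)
Your overall scheme -- stopping cubes, a local $L^p$ estimate via the assumed $A_2$-bound for the main part, and a direct count of the straddling terms to produce the linear growth in $\kappa$ -- is indeed the route behind this lemma (the paper does not reprove it; it points to \cite{CUDPOULMS,CDPOMRL,LMOV} and notes the Banach-valued case is identical). However, two steps fail as written. First, Step 3 is broken on both counts. With the \emph{product} stopping condition you cannot conclude $|f_j|_{X_j}\lesssim\langle|f_j|_{X_j}\rangle_P$ a.e.\ on $P\setminus\bigcup\ch_{\calP}(P)$ for each individual $j$: one factor can be enormous on a subcube while another is tiny, keeping the product below threshold. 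You need the individual conditions ($\langle|f_j|_{X_j}\rangle_Q>C\langle|f_j|_{X_j}\rangle_P$ for some $j$), with $C=C(n,d,\eta)$ -- and note $2^{d+2}$ is in any case too small for the sparsity claim when $n$ is large, and $\eta$-sparseness for $\eta$ close to $1$ forces the threshold to depend on $\eta$. More seriously, the identity $\Lambda_P^{\inside}=U_{\calD'}(f_1^P,\ldots,f_{n+1}^P)$ is false: even when $\pi(Q_j)=P$ for every $j$, the cube $Q_j$ may \emph{strictly contain} stopping children $P'$, on which $f_j\neq f_j^P$, so zero-truncation changes $\langle f_j,u_{j,Q_j}\rangle$. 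These discrepancy terms (with $P'\subsetneq Q_j$) are not in your boundary part, whose definition requires $\pi(Q_j)\neq P$, i.e.\ $Q_j\subseteq P'$; they are never estimated, and for them $K$ ranges over \emph{all} scales above $P'$, so the naive bound does not sum. The standard repair -- and the reason the lemma insists that $u_{j,Q}$ be constant on $\ch(Q)$ -- is to truncate by \emph{averaging}: replace $f_j$ on each stopping child $P'$ by $\langle f_j\rangle_{P'}1_{P'}$. Then $\langle f_j,u_{j,Q_j}\rangle$ is unchanged whenever $Q_j$ is not contained in a stopping child (since $P'\subsetneq Q_j$ lies inside a single child of $Q_j$, where $u_{j,Q_j}$ is constant), the truncated function satisfies $\|\cdot\|_{L^\infty(X_j)}\lesssim\langle|f_j|_{X_j}\rangle_P$ under individual stopping, and the only error terms left are those with $Q_j\subseteq P'\subsetneq K$.

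Second, the accounting in Step 4 does not yield linear dependence on $\kappa$. If each fixed-$K$ summand is bounded by $A_1|K|\prod_j\langle|f_j|_{X_j}\rangle_P$ times $\mathbf 1_{\{K\supsetneq P'\}}$, then summing over the at most $\kappa$ admissible scales of $K$ above a fixed $P'$ gives $\sum_{s\le\kappa}2^{sd}|P'|\sim 2^{\kappa d}|P'|$, i.e.\ exponential in $\kappa$. The linear bound requires the refined estimate: for the distinguished slot $j$ with $Q_j\subseteq P'$, sum over those $Q_j$ first to get $\int_{P'}|f_j|_{X_j}\lesssim|P'|\langle|f_j|_{X_j}\rangle_P$ (maximality of $P'$ controls its average), while the remaining slots produce $\int_K|f_i|_{X_i}\lesssim|K|\langle|f_i|_{X_i}\rangle_P$; the factor $|K|^{-n}$ in the coefficient then leaves $\lesssim A_1|P'|\prod_j\langle|f_j|_{X_j}\rangle_P$ per (scale of $K$, child $P'$, slot $j$), and summing over the $\le\kappa$ scales, the $n+1$ slots, and $\sum_{P'}|P'|\le|P|$ gives $A_1\kappa|P|\prod_j\langle|f_j|_{X_j}\rangle_P$. (Also, the opening reduction to a single top cube is not a monotone convergence statement: a dyadic grid need not contain a cube containing all supports, and the tail of large $K$ must be estimated directly from the coefficient decay $|K|^{-n}$ or absorbed into the sparse family; this is routine but should be said correctly.) With these corrections -- individual stopping, averaged truncation exploiting that $u_{j,Q}$ is constant on children, and the per-child accounting of the straddling terms -- your argument becomes the intended proof.
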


In the previous lemma the sparse collection is in the same grid where the dyadic operator is defined.
The result can be updated to involve a universal sparse set, which is explained in Remark \ref{rem:universal}.
This is important when we move the sparse estimate from DMOs to CZOs via the representation
theorem, which involves a family of  dyadic grids.

\begin{rem} \label{rem:universal}
There exist dyadic grids $\mathcal{D}_i$, $i = 1, \dots, 3^d$, with the following property, 
see Lacey--Mena \cite{LM},   \cite{LMOV}, or \cite{CDPOBP} for a simple proof.
Let $g_m \in L^1_{\loc}$, $m=1, \dots, n+1$, be scalar-valued and let $\eta_1, \eta_2 \in (0,1)$. 
Then for some $i$ there exists an $\eta_2$-sparse collection $\mathcal{U}=\mathcal{U}((g_m), \eta_2)\subset \calD_i$, 
so that for all $\eta_1$-sparse collections of cubes $\mathcal{S}$ we have
\begin{equation}\label{eq:universalsparse}
\sum_{Q\in\mathcal{S}}|Q|\prod_{m=1}^{n+1}\langle |g_m| \rangle_Q 
\lesssim_{\eta_1, \eta_2} \sum_{Q\in\mathcal{U}}|Q|\prod_{m=1}^{n+1}\langle |g_m| \rangle_Q.
\end{equation}
\end{rem}
\begin{rem}\label{r:sparsemf} In \cite{CDPOBP}, it is noted that the sparse domination estimate for an $n+1$-linear form $\Lambda$ on $\R^d$, acting on scalar functions
\[
|\Lambda(f_1, \ldots, f_{n+1}) | 
\lesssim  \sum_{Q\in\mathcal{S}}|Q|\prod_{j=1}^{n+1}\langle |f_j| \rangle_Q,
\]
is equivalent to the   estimate in terms of the multilinear maximal operator $\mathrm{M}$
\[
|\Lambda(f_1, \ldots, f_{n+1} ) | \lesssim \|\mathrm{M}(f_1,\ldots, f_{n+1})\|_{1}, \qquad \mathrm{M}(f_1,\ldots, f_{n+1})(x) = \sup_{x\in Q } \prod_{j=1}^{n+1}\langle |f_j| \rangle_Q.
\]
Vector-valued versions of this principle may be formulated in a totally analogous way. We have used this equivalence to state   the sparse bounds in our main results; this is particularly convenient as the formulation in terms of the multilinear maximal function may be given without defining what a sparse collection is.
\end{rem}

Next, we discuss the well known fact that the sparse domination of an operator implies boundedness in the full range: for more details and weighted corollaries see \cite{CDPOBP,LMOV} and references therein.

Let $X_1,\ldots, X_{n+1}$ be Banach spaces, $n \ge 1$.
Assume that $\Lambda$ is an $(n+1)$-linear form initially defined on  $L^\infty_c(\R^d)\otimes X_1 \times \cdots \times L^\infty_c(\R^d)\otimes X_{n+1}$ such that 
if $f_j \in L^\infty_c(\R^d)\otimes X_{j}$, then there exists a dyadic lattice $\calD $ 
and a sparse collection $\calS  \subset \calD$
so that
\begin{equation}\label{eq:FormSparse}
| \Lambda(f_1,\ldots,f_{n+1})  |
\lesssim  \sum_{Q\in\mathcal{S}}|Q|\prod_{j=1}^{n+1}\langle |f_j|_{X_j} \rangle_Q.
\end{equation}
This easily implies that if $p_j \in (1, \infty)$ for $j \in \calJ_{n+1}$ are such that $\sum_{j=1}^{n+1} 1/p_j=1$ then
$\Lambda$ can be extended to a bounded form $\Lambda \colon L^{p_1}(X_1) \times \dots \times L^{p_{n+1}}(X_{n+1}) \to \C$.
Indeed, just use H\"older's inequality and then Carleson embedding theorem in the right hand side of \eqref{eq:FormSparse}.

We estimate the adjoints $T^{j*}$ of $\Lambda$, which are defined in the usual way based on the functional as in \eqref{eq:DefOfAdjoint}.
By symmetry it will suffice to tackle the case $j=n+1$ and simply write $T$ in place of $T^{(n+1)*}$. 

We use the so-called $A_\infty$ extrapolation from Cruz-Uribe--Martell--P\'erez \cite{CUMP}. 
Let $A_\infty(\R^d)$ be the class of $A_\infty$ weights in $\R^d$, see \cite{CUMP}
for a definition. Suppose $v \in A_\infty(\R^d)$ and $f_j\in L^\infty_c(X_j)$ for 
$j \in \calJ_n$.  Taking $f_{n+1}(x)=\xi(x)v(x)$ for a suitably chosen $\xi \in L^\infty_c(X_{n+1})$ there holds that
\begin{equation*}
\begin{split} 
\int_{\R^d} |  T(f_1, \dots, f_n) |_{X_{n+1}^*} v \sim \Lambda(f_1,\ldots,f_{n+1}) 
 & \lesssim \sum_{Q \in \calS} \prod_{j=1}^{n}\langle |f_j|_{X_j} \rangle_Q v (Q) \\
&  \le \sum_{Q \in \calS} \Big(\bla M^n_\calD(|f_1|_{X_1}, \dots, |f_n|_{X_n})^{1/2} \bra_Q^v\Big)^2 v(Q) \\
&\lesssim \int_{\R^d}  M^n_\calD(|f_1|_{X_1}, \dots, |f_n|_{X_n})  v,
\end{split}
\end{equation*}
where $\langle h\rangle_Q^v=v(Q)^{-1}\int_Q hv$ and $M^n_\calD(g_1, \dots, g_n){\coloneqq} \sup_{Q \in \calD} \prod_{m=1}^n \langle | g_m | \rangle_Q 1_Q$
is the dyadic maximal function and in the last step we used the Carleson embedding theorem. 
Now, the $A_\infty$ extrapolation result, Theorem 2.1 in \cite{CUMP}, gives that
\begin{equation*}
\int_{\R^d} | T(f_1, \dots, f_n) |_{X_{n+1}^*}^p v
 \lesssim \int_{\R^d}  M^n_\calD(|f_1|_{X_1}, \dots, |f_n|_{X_n})^p  v
\end{equation*}
for all $p \in (0, \infty)$ and $v \in A_\infty(\R^d)$. Using this with $v=1$ the boundedness of the maximal function
gives that
$$
\| T(f_1, \dots, f_n) \|_{L^{q_{n+1}}(X_{n+1}^*)}
\lesssim \prod_{j=1}^n \| f_j \|_{L^{p_j}(X_j)},
$$
where $p_j \in (1, \infty]$ are such that $1/{q_{n+1}}{\coloneqq} \sum_{j=1}^n 1/p_j>0$.
Notice that the boundedness of $M^n_\calD$ follows from H\"older's inequality and the boundedness of $M^1_\calD$,
since  there holds that $M^n_\calD(g_1, \dots, g_n) \le \prod_{m=1}^nM^1_\calD (g_m)$. As is clear, multilinear weighted
bounds also follow from this argument and the corresponding results of $M^n_\calD$.

\subsection{Proof of the main theorem}\label{sec:ProofOfMain}
In this section we state and prove our main theorem assuming the estimates for model operators
from Section \ref{sec:ShiftBdd} and Section \ref{sec:ParaBdd}.

\begin{thm}\label{thm:scal} Let $n\geq 1$, $T_0$ be an $n$-linear CZO with kernel $K_0$ and
$\{X_1,\ldots, X_{n+1}\}$ be a $\UMD$ H\"older tuple.
The $(n+1)$-linear form $\Lambda_{T_0}$ defined in \eqref{e:scalemb} 
can be extended to act on functions $f_j \in L^\infty_c(X_j)$, and
given $\eta \in (0,1)$ there exists an $\eta$-sparse collection of cubes $\mathcal{S}=\mathcal{S}((f_m), \eta)$ so that
\begin{equation}\label{SparseForTsc}
\begin{split}
|\Lambda_{T_0}(f_1, \ldots,  f_{n+1})| \lesssim_{\eta}  
\Big[\| K_0 \|_{\operatorname{CZ}_\alpha} + \|T_0\|_{\WBP  } + \sum_{j=1}^{n+1}& \|(T_0)^{j*}(1, \ldots, 1)\|_{\BMO }\Big] \\
&\times \sum_{Q\in\mathcal{S}}|Q|\prod_{j=1}^{n+1}\langle |f_j|_{X_j} \rangle_Q.
\end{split}
\end{equation}
Consequently, we for instance have
$$
\| T_0(f_1, \dots, f_n) \|_{L^{q_{n+1}}(X_{n+1}^*)}
\lesssim \prod_{j=1}^n \| f_j \|_{L^{p_j}(X_j)}
$$
whenever $p_j \in (1, \infty]$ are such that $1/{q_{n+1}}{\coloneqq} \sum_{j=1}^n 1/p_j>0$. See Section \ref{ss:sparse} for a full discussion of the corollaries of the sparse estimate.
\end{thm}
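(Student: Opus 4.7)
The backbone of the argument is the multilinear dyadic representation theorem \eqref{eq:repthm}, which expresses $\langle T_0(f_1,\ldots,f_n),f_{n+1}\rangle$ as an absolutely convergent sum, indexed by complexities $k=(k_1,\ldots,k_{n+1})\in\{0,1,\ldots\}^{n+1}$ and by random dyadic lattices $\calD_\omega$, of scalar DMO pairings weighted by $2^{-\max k_i\,\alpha/2}$, with a structural constant $C_T$ controlled by $\|K_0\|_{\operatorname{CZ}_\alpha} + \|T_0\|_{\WBP} + \sum_{j}\|T_0^{j*}(1,\ldots,1)\|_{\BMO}$. Expanding the Banach-valued form $\Lambda_{T_0}$ via \eqref{e:scalemb}, the representation theorem applied to each scalar $\langle T_0(f_{1,a_1},\ldots,f_{n,a_n}),f_{n+1,a_{n+1}}\rangle$ rewrites $\Lambda_{T_0}(f_1,\ldots,f_{n+1})$ as the same weighted sum of expectations of DMO forms $\Lambda_{U^k_{\calD_\omega,u}}$, now understood in the sense of \eqref{e:shiftforms}. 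Therefore the problem reduces to a sparse bound for each $\Lambda_U$ with constants polynomial in $\max k_i$, compensated by the $2^{-\max k_i\,\alpha/2}$ decay.

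The main obstacle, and the real heart of the paper, is not present at the level of this theorem: it is the $L^p$ boundedness, on some H\"older tuple $p_1,\ldots,p_{n+1}$ with $\sum 1/p_j=1$, of the DMO-forms $\Lambda_U$ in the $\UMD$ H\"older tuple setting, which must be proved separately for shifts (Section \ref{sec:ShiftBdd}) and paraproducts (Section \ref{sec:ParaBdd}). This is where the inductive structure encoded in Property P2 of Definition \ref{defn:productsys1} is exploited, together with the decoupling estimate \eqref{eq:DecEst} and Stein's inequality \eqref{eq:SteinUMD} for $\UMD$-valued martingales, in order to peel off non-cancellative indices and reduce to lower-linearity forms whose boundedness follows from the $\UMD$ hypothesis. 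For the purposes of the present proof I take these $L^p$ estimates as given, with operator norms growing at most polynomially in $\kappa=\max k_i$.

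Once these $L^p$ bounds are fed into Lemma \ref{lem:SparseModel}, applied with $A_1\lesssim 1$ and $A_2$ polynomial in $\kappa$, each form $\Lambda_{U^k_{\calD_\omega,u}}$ is sparse-dominated by an $\eta$-sparse collection $\mathcal S(k,\omega,u)\subset\calD_\omega$ acting on the scalar envelopes $|f_j|_{X_j}$. To detach the sparse set from $(k,\omega,u)$, I invoke Remark \ref{rem:universal}: the finite family of universal grids $\calD_i$, $i=1,\ldots,3^d$, produces a single $\eta$-sparse collection $\mathcal S=\mathcal S((|f_j|_{X_j}),\eta)$ which dominates any $\eta$-sparse sum against the envelopes $|f_j|_{X_j}$ up to an $\eta$-dependent constant. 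Summing in $k$ the geometric-times-polynomial series $\sum_{k} \kappa^{C}\,2^{-\kappa\alpha/2}<\infty$, taking the expectation over $\omega$ (which affects only constants), and using the finite range of $u$ yields \eqref{SparseForTsc} with constant of the claimed form.

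The corollary on the $L^{q_{n+1}}(X_{n+1}^*)$ estimate for the adjoint $T_0$ (and by symmetry for all adjoints $T^{j*}$, which are represented by the formula \eqref{e:scalembadj} in terms of the scalar adjoints of $T_0$) is then an immediate consequence of the sparse bound via the $A_\infty$-extrapolation argument spelled out in Section \ref{ss:sparse}: apply \eqref{eq:FormSparse} with $\Lambda=\Lambda_{T_0}$, pair with a test function $\xi\,v$ where $v\in A_\infty(\R^d)$, estimate by the dyadic multilinear maximal function $M^n_\calD$, extrapolate, and specialize to $v\equiv 1$ to obtain the full H\"older range $1<p_j\le\infty$, $1/q_{n+1}=\sum 1/p_j>0$. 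No further structural input on $\{X_1,\ldots,X_{n+1}\}$ is required for this last step.
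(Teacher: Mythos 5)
Your proposal follows the paper's own proof essentially verbatim: representation theorem \eqref{eq:repthm} applied to the expanded form \eqref{e:scalemb}, the DMO $L^p$ bounds of Sections \ref{sec:ShiftBdd}--\ref{sec:ParaBdd} (with polynomial dependence on complexity) fed into Lemma \ref{lem:SparseModel}, the universal sparse collection of Remark \ref{rem:universal} to remove the dependence on $(k,\omega,u)$, summation of the geometric-times-polynomial series in the complexity, and the $A_\infty$-extrapolation argument of Section \ref{ss:sparse} for the $L^{q_{n+1}}(X_{n+1}^*)$ consequence. This is correct and matches the paper's argument.
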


\begin{proof}
Let $f_j \in L^\infty_c \otimes X_j$ for $j \in \calJ_{n+1}$ be of the form $f_j =\sum_{a=1}^{N_j} f_{j,a} e_{j,a}$.
Then, we have by the dyadic representation \eqref{eq:repthm} that
\begin{equation}\label{eq:ApplyRep}
\begin{split}
 &\Lambda_{T_0}( f_1,\ldots, f_{n+1}) \\
&= C_T \sum_{a_1, \ldots, a_{n+1}}  \E_\omega \sum_{k_1, \ldots, k_{n+1} =0}^\infty  \sum_u 2^{-\frac{\alpha\max k_i }{2}}
\langle U^k_{\calD_\omega,u}(f_{1, a_1}, \ldots, f_{n, a_n}), f_{n+1, a_{n+1}}\rangle \tau\Big( \prod_{j=1}^{n+1} e_{j, a_j}\Big) \\
&= C_T \E_\omega \sum_{k_1, \ldots, k_{n+1} =0}^\infty  \sum_u 2^{-\frac{\alpha\max k_i }{2}} 
\Lambda_{U^k_{\calD_\omega,u}} (f_1,\ldots,f_{n+1}).
\end{split}
\end{equation}

In Section \ref{sec:ShiftBdd} and Section \ref{sec:ParaBdd} it is shown that if $U$ is a dyadic model operator then
\begin{equation}\label{eq:ModelsBdd}
| \Lambda_U(g_1, \dots, g_{n+1})|
\lesssim \prod_{j=1}^{n+1} \| g_j \|_{L^{p_j}(X_j)}
\end{equation}
holds for any $p_j \in (1, \infty)$ and $g_j \in L^\infty_c(X_j)$, $j \in \calJ_{n+1}$, such that $\sum_{j=1}^{n+1} 1/p_j=1$; if $U$ is a shift, then the estimate
depends polynomially on the complexity.
This implies that $\Lambda_{T_0}$ can be extended to act on functions $f_j \in L^\infty_c(X_j)$ and that
\eqref{eq:ApplyRep} holds for such functions.

The estimate \eqref{eq:ModelsBdd} implies via Lemma \ref{lem:SparseModel} 
and Remark \ref{rem:universal} that if $f_j \in L^\infty_c(X_j)$ for $j \in \calJ_{n+1}$
then there exist a dyadic grid and
an $\eta$-sparse collection $\calS=\calS((f_j))\subset \calD$ so that all the model operators appearing in \eqref{eq:ApplyRep} satisfy
$$
| \Lambda_{U^k_{\calD_\omega,u}} (f_1,\ldots,f_{n+1})|
\lesssim \sum_{Q\in\mathcal{S}}|Q|\prod_{j=1}^{n+1}\langle |f_j|_{X_j} \rangle_Q,
$$
where the estimate depends polynomially on the complexity.
This combined with \eqref{eq:ApplyRep} finishes the proof.
\end{proof}
In Section \ref{s:maxhtup},   we show that   the $\UMD$ H\"older tuples enjoy a suitable maximal property  among tuples of spaces admitting $L^p$-bounded    extensions of $n$-linear CZO operators and dyadic shifts.

\section{Boundedness of multilinear shifts in $\UMD$ H\"older tuples}\label{sec:ShiftBdd}

This section is dedicated to the proof of the boundedness of multilinear shifts.
Before starting the main argument, we record a randomized bound for $\UMD$ H\"older tuples in the following lemma.

\begin{lem} \label{l:Rscalar}
Let $\{X_1,\ldots, X_{n+1}\}$ be a $\UMD$ H\"older tuple, $n \ge 2$, and let $K \in \Z_+$. For each $k=1,\ldots, K$
let $a_k$ be a scalar such that $|a_k| \le 1$ and for each $j \in \calJ_{n}$ assume that we are given $e_{j, k} \in X_j$.
Then we have
$$
\Big | \sum_{k=1}^K  a_k \prod_{j=1}^n e_{j, k} \Big|_{{Y(}X_{n+1})} \le \prod_{j=1}^n \|(e_{j,k})_{k=1}^K\|_{\Rad(X_j)}.
$$
\end{lem}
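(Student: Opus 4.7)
The plan is to establish the bound via a randomized representation of the diagonal product $\sum_k a_k \prod_j e_{j,k}$, followed by the H\"older tuple estimate from Remark \ref{rem:ProdInL1}. Since the $Y(X_{n+1})$ norm is defined as a supremum of trace pairings, the claim is equivalent to showing that for every $e_{n+1}\in X_{n+1}$ with $|e_{n+1}|_{X_{n+1}} \le 1$,
\[
\Big|\tau\Big(e_{n+1} \sum_{k=1}^K a_k \prod_{j=1}^n e_{j,k}\Big)\Big| \lesssim \prod_{j=1}^n \|(e_{j,k})_{k=1}^K\|_{\Rad(X_j)}.
\]

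To construct the randomization, I introduce $n-1$ independent Rademacher sequences $\{\varepsilon^{(i)}_k\}_{k=1}^K$, $i=1,\ldots, n-1$, and define scalar coefficients $c^{(1)}_k := a_k \varepsilon^{(1)}_k$, $c^{(i)}_k := \varepsilon^{(i)}_k$ for $2\le i\le n-1$, and $c^{(n)}_k := \prod_{i=1}^{n-1}\varepsilon^{(i)}_k$. Setting $E_j := \sum_k c^{(j)}_k e_{j,k}$, a direct calculation using independence across the sequences in $i$ yields
\[
\E\big[c^{(1)}_{k_1} c^{(2)}_{k_2}\cdots c^{(n)}_{k_n}\big] = a_{k_1}\prod_{i=1}^{n-1}\delta_{k_i, k_n},
\]
which vanishes unless $k_1 = k_2=\cdots = k_n$; hence $\E[E_1 E_2 \cdots E_n] = \sum_k a_k \prod_j e_{j,k}$.

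With this identity in hand, for each realization of the Rademachers Remark \ref{rem:ProdInL1} applied to the $\UMD$ H\"older tuple gives
\[
|\tau(e_{n+1} E_1 \cdots E_n)| \le |e_{n+1}|_{X_{n+1}} \prod_{j=1}^n |E_j|_{X_j}.
\]
I commute the outer trace pairing with the expectation, then apply H\"older's inequality on the probability space with exponents $(n,n,\ldots, n)$, and estimate each $L^n$ norm of the Rademacher sum $E_j$ by its Rademacher norm via the Kahane-Khintchine inequality \eqref{eq:KK}. For $j=1$, the Kahane contraction principle \eqref{eq:KCont} together with the hypothesis $|a_k|\le 1$ absorbs the coefficients, giving $\|(a_k e_{1,k})\|_{\Rad(X_1)} \le \|(e_{1,k})\|_{\Rad(X_1)}$. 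For $j=n$ I observe that $\eta_k := \prod_{i=1}^{n-1}\varepsilon^{(i)}_k$ is itself an i.i.d.\ Rademacher sequence (each $\eta_k$ is a symmetric $\pm 1$-valued random variable, and distinct $\eta_k$, $\eta_{k'}$ depend on disjoint sets of underlying Rademachers), so the distribution of $E_n$ coincides with that of $\sum_k \varepsilon_k e_{n,k}$.

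The main obstacle is identifying the correct randomization in the diagonal representation of the product. The single-Rademacher trick that suffices for $n=2$ (where $\E[\varepsilon_{k_1}\varepsilon_{k_2}]=\delta_{k_1,k_2}$ collapses the sum directly and one needs only Cauchy-Schwarz) no longer produces the diagonal for $n\ge 3$, since odd-order moments vanish. The pairing provided by $n-1$ independent sequences, with the compensating product $\eta_k$ appearing in the $n$th factor, is what makes each $\varepsilon^{(i)}$ appear exactly twice and forces the desired collapse; once this step is in place, the tuple H\"older bound, H\"older in probability, and Kahane-type inequalities combine in a routine way.
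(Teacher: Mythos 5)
Your argument is correct, but it runs on a different randomization than the paper's. You extract the diagonal via a ``star'' scheme: independent sign sequences $\varepsilon^{(1)},\ldots,\varepsilon^{(n-1)}$ attached to the first $n-1$ factors and the compensating products $\eta_k=\prod_{i=1}^{n-1}\varepsilon^{(i)}_k$ in the $n$-th, so that $\E\big[c^{(1)}_{k_1}\cdots c^{(n)}_{k_n}\big]$ is exactly $a_{k_1}$ times the diagonal indicator; you then dualize against a unit vector of $X_{n+1}$, apply the H\"older-tuple trace bound of Remark \ref{rem:ProdInL1} pathwise, use H\"older with exponents $(n,\ldots,n)$ on the probability space, and conclude with Kahane--Khintchine, the contraction principle for the $a_k$, and the (correct) observation that $(\eta_k)_k$ is again a collection of independent random signs. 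The paper instead uses a chained pairing, assigning $\varepsilon^1_{k_1}a_{k_1}$ to the first factor, $\varepsilon^{i-1}_{k_i}\varepsilon^{i}_{k_i}$ to the $i$-th for $2\le i\le n-1$, and $\varepsilon^{n-1}_{k_n}$ to the last, followed by Cauchy--Schwarz and a telescoping conditional-expectation computation in which each $\E^i$ produces exactly one $\Rad(X_{i+1})$ norm; this avoids comparing $L^n$ and $L^2$ moments and gives the bound essentially with constant $1$, matching the ``$\le$'' in the statement. Your route is conceptually a little simpler, but it picks up Kahane--Khintchine constants depending on $n$, so strictly you prove the lemma with ``$\lesssim$'' rather than ``$\le$''; this is harmless for the application in Theorem \ref{thm:MultiShifts}, where all implicit constants may depend on $n$ (and indeed even the paper's proof incurs an absolute constant from the contraction principle when the $a_k$ are complex, cf.\ the comment after \eqref{eq:KCont}). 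One point worth making explicit in a final write-up: the pathwise estimate and the trace/expectation interchange use that $\prod_{j=1}^n E_j\in Y(X_{n+1})$ with $\big|\prod_j E_j\big|_{Y(X_{n+1})}\le\prod_j|E_j|_{X_j}$, which is exactly what Remark \ref{rem:ProdInL1} supplies.
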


\begin{proof}
Fix $K$, $|a_k| \le 1$ and $e_{j, k} \in X_j$ as in the assumptions.
Let $\{\varepsilon^i_k\}_{k=1}^K$, $i \in \calJ_{n-1}$, be collections of  independent random signs.
We denote the expectation with respect to the random variables $\{\varepsilon^i_k\}_{k=1}^K$ by $\E^i$, and write $\E=\E^1 \cdots \E^{n-1}$.
We have the identity
\begin{equation*}
\begin{split}
\sum_{k=1}^K  a_k \prod_{j=1}^n e_{j, k} 
&= \E \sum_{k_1, \dots, k_{n} =1}^K \varepsilon^1_{k_1}\varepsilon^1_{k_2}
\varepsilon^2_{k_2}\varepsilon^2_{k_3} \cdots \varepsilon_{k_{n-1}}^{n-1}\varepsilon_{k_{n}}^{n-1}
a_{k_1} \prod_{j=1}^n e_{j, k_j}\\
& =\E  \Big(  \sum_{k_1=1}^K \varepsilon^1_{k_1} a_{k_1} e_{1,k_1} \Big)
\Big(  \sum_{k_2 =1}^K \varepsilon^1_{k_2}\varepsilon^2_{k_2} e_{2,k_2} \Big) \cdots 
\Big(  \sum_{k_n=1}^K \varepsilon_{k_{n}}^{n-1}e_{n, k_n} \Big).
\end{split}
\end{equation*}
We can dominate this with
$$
\E   \Big  \| \sum_{k_1 =1}^K \varepsilon^1_{k_1} a_{k_1} e_{1,k_1} \Big\|_{X_1}
\Big(\prod_{i=2}^{n-1} \Big \| \sum_{k_i =1}^K \varepsilon^{i-1}_{k_i}\varepsilon^i_{k_i} e_{i,k_i} \Big \|_{X_i} \Big)
\Big  \| \sum_{k_n=1}^K \varepsilon_{k_{n}}^{n-1}e_{n, k_n} \Big\|_{X_n},
$$
which is further controlled by
\begin{equation} \label{e:radlemma}
\begin{split}
&\quad \Big(\E  \Big  \| \sum_{k_1 =1}^K \varepsilon^1_{k_1} a_{k_1} e_{1,k_1} \Big\|_{X_1}^2\Big)^{1/2} \\
&\times
\Big[\E\Big(\prod_{i=2}^{n-1} \Big \| \sum_{k_i =1}^K \varepsilon^{i-1}_{k_i}\varepsilon^i_{k_i} e_{i,k_i} \Big \|_{X_i}^2 
\Big)\Big  \| \sum_{k_n=1}^K \varepsilon_{k_{n}}^{n-1}e_{n, k_n} \Big\|_{X_n}^2
 \Big]^{1/2}.
\end{split}
\end{equation}

The first factor is less than $\| (e_{1,k})_{k=1}^K \| _{\Rad(X_1)}$ by Kahane's contraction principle.
We now consider the second factor. We see that the variables $\varepsilon^1_{k}$ appear only inside the norm $X_{2}$, and moreover
there holds that
$$
\E^1  \Big \| \sum_{k_2=1}^K \varepsilon^1_{k_2}\varepsilon^2_{k_2} e_{2,k_2} \Big \|_{X_2}^2 
= \| (e_{2,k})_{k=1}^K \| _{\Rad(X_2)}^2.
$$
After using this identity, the variables $\varepsilon_k^1$ do not appear anymore, and the
variables $\varepsilon^2_k$ appear only inside the norm  $X_3$. Repeating the same reasoning, we deduce that the second factor in \eqref{e:radlemma} is equal to the product $\prod_{j=2}^{n} \| (e_{j,k})_{k=1}^K \| _{\Rad(X_j)}$.
\end{proof}

Now, we turn to the actual proof of boundedness of shifts.
We assume that $n \ge 1$ and that $\{X_1, \dots, X_{n+1}\}$ is a $\UMD$ H\"older tuple.
Let $k=(k_1, \dots, k_{n+1})$, $0 \le k_i \in \Z$,
 and let $\calD$ be a dyadic lattice in $\R^d$.
Suppose $S^k{\coloneqq}S^k_\calD$ is an $n$-linear dyadic shift as described in Equation \eqref{e:shift}. 
We consider its related $(n+1)$-linear form $\Lambda_{S^k}$ which acts on
locally integrable functions $f_j \colon \R^d \to X_j$ by
\begin{equation}\label{eq:DyadicShiftForm}
\Lambda_{S^k}(f_1,\dots,f_{n+1})
=\sum_{K \in \calD} \Lambda_K(f_1, \ldots, f_{n+1}),
\end{equation}
where 
$$
\Lambda_K(f_1, \ldots, f_{n+1}) = 
\sum_{\substack{Q_1, \dots, Q_{n+1} \in \calD \\ Q_j^{(k_j)}=K}}
a_{K,(Q_j)_{j \in \calJ_{n+1}}} \tau \Big(\prod_{j=1}^{n+1}  \langle f_j, \wt h_{Q_j} \rangle\Big).
$$
Here $a_{K,(Q_j)_{j \in \calJ_{n+1}}}$ is a scalar satisfying $|a_{K,(Q_j)_{j \in \calJ_{n+1}}}| \le \prod_{j=1}^{n+1} |Q_j|^{1/2} |K|^{-n}$,
and there exist two indices $j_0,j_1 \in \calJ_{n+1}$, $j_0 \not =j_1$, so that $\wt h_{Q_{j_0}}=h_{Q_{j_0}}$, $\wt h_{Q_{j_1}}=h_{Q_{j_1}}$ and $\wt h_{Q_j}=h_{Q_j}^0$ if $j \in \calJ_{n+1} \setminus \{j_0, j_1\}$.

 The sparse domination lemma \ref{lem:SparseModel} reduces the problem to  the following theorem.

\begin{thm}\label{thm:MultiShifts}
Suppose $p_j \in (1, \infty)$ for $j \in \calJ_{n+1}$ are such that 
$\sum_{j=1}^{n+1}1/p_j=1$.
The dyadic shift form from \eqref{eq:DyadicShiftForm} satisfies the estimate
$$
|\Lambda_{S^k}(f_1,\dots,f_{n+1})| \lesssim \prod_{j=1}^{n+1} \| f_j \|_{L^{p_j}(X_j)}
$$
for $f_j \in L^\infty_c(X_j)$, where the estimate depends polynomially on $\kappa{\coloneqq} \max_jk_j$.
\end{thm}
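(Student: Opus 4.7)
I will proceed by induction on the linearity $n$, using the H\"older tuple structure to recast the $n$-linear shift on $\{X_1, \dots, X_{n+1}\}$ as an $(n-1)$-linear shift on a reduced UMD H\"older $n$-tuple, tested against an auxiliary function that packages together one cancellative and one non-cancellative input. The base case $n = 1$ is the classical bound for linear dyadic shifts on the UMD space $X_1$: by Remark \ref{rem:dual}, $X_2 = Y(X_1)$ is identified with $X_1^*$, so the bilinear form \eqref{eq:DyadicShiftForm} is the standard duality pairing, and its polynomial-in-$\kappa$ bound follows from UMD together with the decoupling inequality \eqref{eq:DecEst}.

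\textbf{Inductive step.} Assume the theorem for all $(n-1)$-linear shifts on every UMD H\"older $n$-tuple. Select a non-cancellative index $i \in N$ and a cancellative index $j \in C$. By Property P2, the collection $\{X_k\}_{k \neq i, j} \cup \{Z\}$ with $Z := Y(\{X_k\}_{k \neq i, j})$ is a UMD H\"older $n$-tuple, and the bilinear product $X_i \times X_j \to Z$, $(e, f) \mapsto ef$, is well-defined and bounded. Using the tracial identity \eqref{e:commtrace} to re-order factors inside $\tau$, the form factors as
\[ \Lambda_{S^k}(f_1, \dots, f_{n+1}) = \Lambda_{S'}\bigl(\{f_k\}_{k \neq i, j},\, g\bigr), \]
where $S'$ is an $(n-1)$-linear dyadic shift on the reduced tuple (the new output position $Z$ inheriting the cancellative Haar of index $j$, which preserves the requirement of two cancellative slots) and $g \colon \R^d \to Z$ depends bilinearly on $(f_i, f_j)$. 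Applying the inductive hypothesis to $S'$ reduces the theorem to the bilinear estimate
\[ \|g\|_{L^{p'}(Z)} \lesssim \|f_i\|_{L^{p_i}(X_i)} \|f_j\|_{L^{p_j}(X_j)} \]
with polynomial dependence on $\kappa$.

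\textbf{Bilinear estimate.} The auxiliary function $g$ is itself of dyadic shift type, with the cancellative Haars at position $j$ producing the output in $Z$ while the non-cancellative averages $\langle f_i\rangle_{Q_i}$ appear inside the kernel coefficients. To bound it, I plan to combine four standard vector-valued devices: (i) randomize the Haar expansion at position $j$ using the UMD property of $Z$ (Remark \ref{rem:dual}) to produce an expectation over independent signs; (ii) split $\calD$ into the subcollections $\calD_{l, \kappa}$ from \eqref{eq:SubLattice} and apply the decoupling inequality \eqref{eq:DecEst} in each, replacing correlated cube-indexed sums by independent copies at the cost of a factor polynomial in $\kappa$; (iii) invoke Stein's inequality \eqref{eq:SteinUMD} to absorb the averaging operators $\langle \cdot\rangle_{Q_i}$ into the $L^{p_i}(X_i)$-norm of $f_i$; (iv) apply Lemma \ref{l:Rscalar} to disentangle the remaining randomized product in the $Z$-norm into a product of $\Rad(X_i)$- and $\Rad(X_j)$-norms, each controlled via Kahane--Khintchine \eqref{eq:KK}.

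\textbf{Main obstacle.} The core difficulty is the simultaneous orchestration of randomization, decoupling, and Stein's inequality across the two paired positions $(i, j)$ while respecting the non-commutative ordering enforced by the trace. Lemma \ref{l:Rscalar} is the critical linearization tool: it converts a scalar-coefficient randomized product in the $Z$-norm into a separable product of Rad-norms, thereby letting the cancellative and non-cancellative positions be handled independently by the UMD hypotheses on $X_i$ and $X_j$. Iterated through the induction, this mechanism is the novel ingredient that makes the genuine $n$-linear theory tractable and extends beyond the complexity-zero bilinear argument of \cite[Section 6]{HLP}.
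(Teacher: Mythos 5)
Your toolbox (decoupling, Stein's inequality, Lemma \ref{l:Rscalar}, reduction of linearity through the H\"older-tuple structure) is the right one, and induction on the linearity is indeed the paper's guiding idea, but the pivot of your inductive step --- the claimed identity $\Lambda_{S^k}(f_1,\dots,f_{n+1})=\Lambda_{S'}\bigl(\{f_k\}_{k\neq i,j},g\bigr)$ with $S'$ a genuine $(n-1)$-linear dyadic shift and $g$ a single fixed $Z$-valued function bilinear in $(f_i,f_j)$ --- does not exist. In \eqref{eq:DyadicShiftForm} the coefficient $a_{K,(Q_m)}$ depends jointly on \emph{all} the cubes $Q_1,\dots,Q_{n+1}$, and the pairings at positions $i$ and $j$ live on different cubes $Q_i\neq Q_j$ at depths $k_i,k_j$ below the same $K$; hence no function $g$ independent of $K$ and of the cubes can have Haar coefficients $\langle g,h_{Q_j}\rangle$ reproducing the entangled double sum over $(Q_i,Q_j)$. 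Even in the complexity-zero case the combined contribution is of the form $\langle (E_K f_i)\,f_j,\,h_K\rangle$, which still depends on $K$ through the conditional expectation; removing that dependence is precisely where Stein's inequality \eqref{eq:SteinUMD} must be applied \emph{inside} the randomized and decoupled sum over $K$ of the full (dualized) $n$-linear object, and it cannot be outsourced to a standalone bilinear bound $\|g\|_{L^{p'}(Z)}\lesssim\|f_i\|_{L^{p_i}(X_i)}\|f_j\|_{L^{p_j}(X_j)}$ proved separately and then fed into the inductive hypothesis as a black box. So the inductive step, as formulated, cannot be executed.

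Two further ingredients are missing even granting a corrected reduction. First, when a non-cancellative position has positive complexity $k_i>0$ the coefficient depends on $Q_i$, not just $K$, and Stein's inequality is not applicable at the level of $K$; the paper first expands $E^{k_i}_K f_i=\sum_{l<k_i}\Delta^{l}_K f_i+E_K f_i$ as in \eqref{eq:NonCancExpand}, converting each non-cancellative slot either into a cancellative slot at an intermediate cube or into a true average $\langle f_i\rangle_K$. This expansion, together with the splitting of $\calD$ into the $\kappa+1$ subcollections $\calD_{i,\kappa}$ of \eqref{eq:SubLattice} needed for the decoupling \eqref{eq:DecEst}, is exactly where the polynomial dependence on $\kappa$ arises, which your outline never tracks. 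Second, after one reduction the resulting $(n-1)$-linear object is a shift-\emph{type} form which may have more than two cancellative slots and still several non-cancellative ones, so it is not literally a shift covered by the stated theorem; the paper therefore organizes the iteration around the dualized norm: decouple, apply Stein, use the product inequality \eqref{eq:KeyIneq} (after a \emph{cyclic} permutation --- note $\tau$ is only cyclically invariant, so the non-cancellative index must be chosen cyclically adjacent to a cancellative one, not an arbitrary pair $(i,j)$) to pull out $\|f_n\|_{L^{p_n}(X_n)}$, undo the decoupling, and repeat until all remaining Haars are cancellative, at which point Lemma \ref{l:Rscalar}, Kahane--Khintchine and decoupling close the estimate. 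Your proposal would need to be restructured along these lines to go through.
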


\begin{proof}
Let $f_j \in L^\infty_{c}(X_j)$ for $j \in \calJ_{n}$ and consider \eqref{eq:DyadicShiftForm}.
Recall the lattices $\calD_{i,\kappa}$ from  \eqref{eq:SubLattice}, where $\kappa{\coloneqq}\max_j k_j$.
We first divide the sum over the cubes $K \in \calD$ as $\sum_{i=0}^{\kappa}\sum_{K \in \calD_{i,\kappa}}$. 
We fix one $i$ and consider the corresponding term.

Let $ \wt \calJ$ be the set of those indices such that the corresponding Haar functions are non-cancellative, that is, 
$\wt h_{Q_j}=h^0_{Q_j}$. Suppose $j \in \wt \calJ$ is such that $k_j>0$. We use that fact that 
$\langle f_j, \wt h_{Q_j} \rangle=\langle E_K^{k_j} f_j,  h^0_{Q_j} \rangle$ and split
\begin{equation}\label{eq:NonCancExpand}
E^{k_j}_K f_j= \sum_{l_j=0}^{k_j-1} \Delta^{l_j}_K f_j + E_K f_j.
\end{equation}
There holds that
\begin{equation}\label{eq:Id1}
\langle E_K f_j, h^0_{Q_j} \rangle 
= \langle f_j, h^0_K \rangle \langle h_K^0, h_{Q_j}^0\rangle
\end{equation}
and
\begin{equation}\label{eq:Id2}
\langle \Delta^{l_j}_K f_j, h^0_{Q_j} \rangle 
= \langle f_j, h_{Q_j^{(k_j-l_j)}} \rangle \langle h_{Q_j^{(k_j-l_j)}}, h^0_{Q_j} \rangle,
\end{equation}
where as usual we suppressed the summation over the different Haar functions.

We use \eqref{eq:NonCancExpand} to split $ \sum_{K \in \calD_{i,\kappa}} \Lambda_K(f_1, \dots, f_{n+1})$ into at most $(1+\kappa)^{n-1}$ terms of the form
\begin{equation}\label{eq:SplitShift}
\sum_{K \in  \calD_{i,\kappa}} \sum_{\substack{Q_1, \dots, Q_{n+1} \in \calD \\ Q_j^{(k_j)}=K}}
a_{K,(Q_j)_{j \in \calJ_{n+1}}} \tau \Big(\prod_{j=1}^{n+1} \langle P^{l_j}_{K,j} f_j, \wt h_{Q_j} \rangle \Big),
\end{equation}
where $l_j \in \Z$, $0 \le l_j \le k_j$. 
For $j \in \calJ_{n+1} \setminus \wt \calJ$ we have that $P^{l_j}_{K,j}$ is the identity operator, and below we write $l_j=k_j$.
If $j \in \wt  \calJ$  and $l_j>0$ then $P^{l_j}_{K,j}=\Delta^{l_j}_{K}$, and if $j \in \wt \calJ$ and $l_j=0$ then $P^{0}_{K,j}$ is either $E_K$ or $\Delta_K$ 
(but does not change with $K$). We write
$$
\sum_{K \in  \calD_{i,\kappa}} \sum_{\substack{Q_1, \dots, Q_{n+1} \in \calD \\ Q_j^{(k_j)}=K}}
= \sum_{K \in  \calD_{i,\kappa}}  \sum_{\substack{L_1, \dots, L_{n+1} \in \calD \\ L_j^{(l_j)}=K}}  \sum_{\substack{Q_1, \dots, Q_{n+1} \in \calD \\ Q_j^{(k_j-l_j)}=L_j}}
$$
and notice that by \eqref{eq:Id1} and \eqref{eq:Id2} we always have that
$$
\langle P^{l_j}_{K,j} f_j, \wt h_{Q_j} \rangle = \langle f_j, h_{L_j}'\rangle \gamma(Q_j, L_j),
$$
where $h_{L_j}' \in \{h_{L_j}^0, h_{L_j}\}$ and
$$
|\gamma(Q_j, L_j)| = \frac{|Q_j|^{1/2}}{|L_j|^{1/2}}.
$$
We can now write \eqref{eq:SplitShift} further as
\begin{equation}\label{eq:NewShiftTypeOper}
\sum_{K \in  \calD_{i,\kappa}} \sum_{\substack{L_1, \dots, L_{n+1} \in \calD \\ L_j^{(l_j)}=K}} 
b_{K,(L_j)_{j \in \calJ_{n+1}}} \tau \Big(\prod_{j=1}^{n+1} \langle f_j,  h_{L_j}' \rangle \Big),
\end{equation}
where
$$
b_{K,(L_j)_{j \in \calJ_{n+1}}}
=\sum_{\substack{Q_1, \dots, Q_{n+1} \in \calD \\ Q_j^{(k_j-l_j)}=L_j}}
 a_{K,(Q_j)_{j \in \calJ_{n+1}}} \prod_{j=1}^{n+1} \gamma(Q_j, L_j).
$$
There exists $\calJ \subset \calJ_{n+1}$ with $\#\calJ \ge 2$
so that $h'_{L_j}=h_{L_j}$ for $j \in \calJ$ and if $j \in \calJ_{n+1} \setminus \calJ$ then $h'_{L_j}=h^0_{L_j}$ and $l_j=0$. 
Also, we have the normalization
$
|b_{K,(L_j)_{j \in \calJ_{n+1}}}| \le \prod_{j=1}^{n+1} |L_j|^{1/2} |K|^{-n}.
$

We have reduced to considering the new shift type operator \eqref{eq:NewShiftTypeOper}. 
The coefficients satisfy the usual normalization of shifts, but the number $\#\calJ$ of indices with cancellative Haar functions may be bigger than $2$.
What is essential is that the complexity related to the non-cancellative indices is zero -- that is, if $j \in \calJ_{n+1} \setminus \calJ$ then $l_j=0$.
We now start estimating \eqref{eq:NewShiftTypeOper}. Also, the separation of scales, $K \in \calD_{i,\kappa}$, will allow us to use the decoupling estimate \eqref{eq:DecEst}.

\textbf{Case 1.} Assume that $\calJ = \calJ_{n+1}$. Let $q_{n+1} \in (1, \infty)$ be the exponent determined by $1/q_{n+1}=\sum_{j=1}^n 1/p_j$.
We need to estimate
\begin{equation*}
\begin{split}
&\Big\|\sum_{K \in  \calD_{i,\kappa}} \sum_{\substack{L_1, \dots, L_{n+1} \in \calD \\ L_j^{(l_j)}=K}} 
b_{K,(L_j)_{j \in \calJ_{n+1}}} \prod_{j=1}^{n} \langle f_j,  h_{L_j} \rangle h_{L_{n+1}} \Big\|_{L^{q_{n+1}}( {Y(}X_{n+1}))} \\
&\sim \Big( \E \int_{\R^d} \int_{\calV} 
\Big | \sum_{K \in \calD_{i,\kappa}} \varepsilon_K 1_K(x) \sum_{\substack{L_1, \dots, L_{n+1} \in \calD \\ L_j^{(l_j)}=K}} 
b_{K,(L_j)_{j \in \calJ_{n+1}}} \\
&\hspace{6cm}\times \prod_{j=1}^{n} \langle f_j,  h_{L_j} \rangle h_{L_{n+1}}(y_K) \Big|^{q_{n+1}}_{{Y(}X_{n+1})}
\ud \nu (y) \ud x \Big)^{1/q_{n+1}},
\end{split}
\end{equation*}
where we used the decoupling estimate. Notice that since by assumption $X_{n+1}=Y(X_1, \dots, X_n)$, 
there holds also that $Y(X_{n+1})=Y(Y(X_1, \dots, X_n))$, so we could also use the norm $| \cdot |_{Y(Y(X_1, \dots, X_n))}$ instead.
Write
\begin{align*}
&\sum_{\substack{L_1, \dots, L_{n+1} \in \calD \\ L_j^{(l_j)}=K}} 
b_{K,(L_j)_{j \in \calJ_{n+1}}} \prod_{j=1}^{n} \langle f_j,  h_{L_j} \rangle h_{L_{n+1}}(y_K) \\
&= \frac{1}{|K|^n} \int_{K^n} b_K(y_K, z) \prod_{j=1}^n \Delta_{K}^{l_j} f_j(z_j) \ud z 
= \int_{\calV^n} b_K(y_K, z_K)\prod_{j=1}^n \Delta_{K}^{l_j} f_j(z_{j,K}) \ud \nu_n(z),
\end{align*}
where $\nu_n$ is the product measure $\nu \times \cdots \times \nu$ on the product space $\calV^n$ and
$$
b_K(y_K, z_K) = |K|^n\sum_{\substack{L_1, \dots, L_{n+1} \in \calD \\ L_j^{(l_j)}=K}}  
b_{K,(L_j)_{j \in \calJ_{n+1}}} \prod_{j=1}^n h_{L_j}(z_{j,K})  h_{L_{n+1}}(y_K).
$$

We can now continue the estimate by using H\"older's inequality related to the integral $\int_{\calV^n}$. We end up with
\begin{equation}\label{eq:ReadyForR}
\Big( \E \int_{\R^d} \int_{\calV} \int_{\calV^n}
\Big | \sum_{K \in \calD_{i,\kappa}} \varepsilon_K 1_K(x)  
  b_K(y_K, z_K)\prod_{j=1}^n \Delta_{K}^{l_j} f_j(z_{j,K}) \Big|^{q_{n+1}}_{{Y(}X_{n+1})}
\ud \nu_n(z) \ud \nu (y) \ud x \Big)^{1/q_{n+1}}.
\end{equation}
Suppose $n \ge 2$. Notice that $|b_K(y_K, z_K)| \le 1$ and use Lemma \ref{l:Rscalar} to get that
\begin{align*}
\Big| \sum_{K \in \calD_{i,\kappa}} \varepsilon_K 1_K(x)  
  b_K(y_K, z_K)\prod_{j=1}^n \Delta_{K}^{l_j} f_j(z_{j,K}) \Big|_{{Y(}X_{n+1})} \\
\le  \prod_{j =1}^n \| (1_K(x)  \Delta^{l_j}_Kf_j(z_{j,K}))_{K \in \calD_{i, \kappa}} \|_{\Rad(X_j)}.
\end{align*}
Using first H\"older's inequality, then Kahane-Khintchine inequality and finally the decoupling estimate, we conclude that
\begin{align*}
\eqref{eq:ReadyForR}
&\lesssim \prod_{j=1}^n \Big( \int_{\R^d} \int_{\calV} \| (1_K(x)  \Delta^{l_j}_Kf_j(z_{K}))_{K \in \calD_{i, \kappa}} \|_{\Rad(X_j)}^{p_j} \ud \nu(z) \ud x\Big)^{1/p_j} \\
&\sim \prod_{j=1}^n \Big( \E  \int_{\R^d} \int_{\calV} \Big|\sum_{K \in \calD_{i, \kappa}} \varepsilon_K 1_K(x) \Delta^{l_j}_Kf_j(z_{K}) \Big|_{X_j}^{p_j} \ud \nu(z) \ud x\Big)^{1/p_j} \\
&\lesssim \prod_{j=1}^n \| f_j \|_{L^{p_j}(X_j)}.
\end{align*}

Suppose then $n=1$. In this case we have that $q_2=p_1$ and $Y(X_2)=X_1$. 
We use Kahane-Khintchine inequality to move the expectation inside of the exponent $p_1$. 
Then, we use Kahane's contraction principle and move the expectation out again.  This gives that
$$
\eqref{eq:ReadyForR}
\lesssim \Big( \E \int_{\R^d} \int_{\calV}
\Big | \sum_{K \in \calD_{i,\kappa}} \varepsilon_K 1_K(x)  
 \Delta_{K}^{l_1} f_1(z_{K}) \Big|^{p_{1}}_{X_1}
\ud \nu(z)  \ud x \Big)^{1/p_1}
\lesssim \| f_1 \|_{L^{p_1}(X_1)},
$$
where the last step used the decoupling estimate. Linear estimates for shifts have appeared e.g. in \cite{HH, Hy2}.

\textbf{Case 2.} Assume now that $\calJ \subsetneq \calJ_{n+1}$. Since $\#\calJ \ge 2$, this implies that $n \ge 2$.  
Let $j_0 \in \calJ_{n+1} \setminus \calJ$  be an index such that $j_0+1 \in \calJ$; by $(n+1)+1$ we mean $1$. 
Let $\si \in \Sigma(n+1)$ be the cyclic permutation such that $\si(n)=j_0$. Then $\si(n+1) \in \calJ$.
If $e_j \in X_j$ for $j \in \calJ_{n+1}$ then from Remark \ref{rem:ProdInL1} one sees that $\prod_{j=1}^n e_j \in Y(X_{n+1})$
and therefore the cyclic invariance of the trace \eqref{e:commtrace} gives that
$
\tau(e_1 \cdots e_{n+1})
=\tau(e_{n+1} e_1 \cdots e_n).
$    
Repeating this we have that
\eqref{eq:NewShiftTypeOper} is equal to
$$
\sum_{K \in  \calD_{i,\kappa}} \sum_{\substack{L_1, \dots, L_{n+1} \in \calD \\ L_j^{(l_j)}=K}} 
b_{K,(L_j)_{j \in \calJ_{n+1}}} \tau \Big(\prod_{j=1}^{n+1} \langle f_{\si(j)},  h_{L_{\si(j)}}' \rangle \Big).
$$
Having made this important observation, we may now assume, for small notational convenience, that $j_0 = n$ and $\sigma = \id$.
Under this assumption $n \in \calJ_{n+1} \setminus \calJ$, which implies that $l_n=0$. Thus, the coefficient $b_{K,(L_j)_{j \in \calJ_{n+1}}}$
depends only on the cubes $L_1, \dots, L_{n-1}, L_{n+1}$ and $K$. Below we will write the coefficient as $b_{K,(L_j)}$.

We need to estimate
\begin{equation*}
\begin{split}
\Big\|\sum_{K \in  \calD_{i,\kappa}}& \sum_{\substack{L_1, \dots, L_{n-1}, L_{n+1} \in \calD \\ L_j^{(l_j)}=K}} 
b_{K,(L_j)} \prod_{j=1}^{n-1} \langle f_j,  h_{L_j}' \rangle \langle f_{n} \rangle_K |K|^{1/2} h_{L_{n+1}} \Big\|_{L^{q_{n+1}}( {Y(}X_{n+1}))} \\
&\sim \Big( \E \int_{\R^d} \int_{\calV} 
\Big | \sum_{K \in \calD_{i,\kappa}} \varepsilon_K 1_K(x) \langle \varphi_{K,y} \rangle_K  \Big|^{q_{n+1}}_{{Y(}X_{n+1})}
\ud \nu (y) \ud x \Big)^{1/q_{n+1}},
\end{split}
\end{equation*}
where we used the decoupling estimate, and for $K \in \calD_{i,\kappa}$ and $y \in \calV$ we defined the function $\varphi_{K,y} \colon \R^d \to {Y(}X_{n+1})$ by setting $\varphi_{K,y}(x)$ to equal
\begin{equation*}
\begin{split}
|K|^{1/2} \sum_{\substack{L_1, \dots, L_{n-1}, L_{n+1} \in \calD \\ L_j^{(l_j)}=K}}  
b_{K,(L_j)} \prod_{j=1}^{n-1} \langle f_j,  h_{L_j}' \rangle  f_{n}(x)  h_{L_{n+1}}(y_K).
\end{split}
\end{equation*}
After using Stein's inequality \eqref{eq:SteinUMD} with respect to $x \in \R^d$ with fixed $y \in \calV$ we are left with
\begin{equation}\label{eq:ShiftAfterStein1}
\Big( \E \int_{\R^d} \int_{\calV} 
\Big | \sum_{K \in \calD_{i,\kappa}} \varepsilon_K 1_K(x) \varphi_{K,y}(x)  \Big|^{q_{n+1}}_{{Y(}X_{n+1})}
\ud \nu (y) \ud x \Big)^{1/q_{n+1}}.
\end{equation}

Recall that $n \ge 2$ in Case 2. 
From Remark \ref{rem:ProdInL1} we can deduce that if $e_n \in X_n$ and $e_{n+1} \in X_{n+1}$, then $e_ne_{n+1} \in Y(X_1, \dots, X_{n-1})$ and 
$|e_ne_{n+1}|_{Y(X_1, \dots, X_{n-1})} \le |e_n|_{X_n} | e_{n+1} |_{X_{n+1}}$.
Also, since $\{X_1, \dots, X_{n-1}, Y(X_1, \dots, X_{n-1})\}$ is a $\UMD$ H\"older tuple, 
we see from Remark \ref{rem:ProdInL1}  again that if $e_j \in X_j$ for $j \in \calJ_{n-1}$, then $\prod_{j=1}^{n-1} e_j \in Y(Y(X_1, \dots, X_{n-1}))$.
Suppose now that $e_{j,k}\in X_j$ for $j \in  \calJ_{n-1}$, $k=1, \dots, K$, and $e_{n} \in X_{n}$. Then the above consideration implies that 
the key inequality
\begin{equation}\label{eq:KeyIneq}
\Big | \sum_{k=1}^K \prod_{j=1}^{n-1} e_{j,k}e_{n} \Big |_{{Y(}X_{n+1})}
\le \Big | \sum_{k=1}^K \prod_{j=1}^{n-1} e_{j,k} \Big |_{Y(Y(X_1, \dots, X_{n-1}))} |e_{n}|_{X_{n}}
\end{equation}
holds. Write $Z{\coloneqq}Y(Y(X_1, \dots, X_{n-1}))$ for the moment.
Using this in \eqref{eq:ShiftAfterStein1} and then H\"older's inequality 
we have that \eqref{eq:ShiftAfterStein1} is dominated by $\| f_n \|_{L^{p_n}(X_n)}$ multiplied by
\begin{equation*}
\begin{split}
\Big( \E \int_{\R^d} & \int_{\calV} 
\Big |\sum_{K \in \calD_{i,\kappa}} \varepsilon_K 1_K(x)  \sum_{\substack{L_1, \dots, L_{n-1}, L_{n+1} \in \calD \\ L_j^{(l_j)}=K}}  
\wt b_{K,(L_j)} \prod_{j=1}^{n-1} \langle f_j,  h_{L_j}' \rangle   h_{L_{n+1}}(y_K)  \Big|^{p(\calJ_{n-1})}_{Z}
\ud \nu (y) \ud x \Big)^{\frac{1}{p(\calJ_{n-1})}}  \\
& \sim \Big \|\sum_{K \in \calD_{i,\kappa}} \sum_{\substack{L_1, \dots, L_{n-1}, L_{n+1} \in \calD \\ L_j^{(l_j)}=K}}  
\wt b_{K,(L_j)} \prod_{j=1}^{n-1} \langle f_j,  h_{L_j}' \rangle   h_{L_{n+1}} \Big \|_{L^{p(\calJ_{n-1})}(Z)},
\end{split}
\end{equation*}
where we defined $1/p(\calJ_{n-1}){\coloneqq} \sum_{j=1}^{n-1} 1/p_j$, $\wt b_{K,(L_j)}{\coloneqq} |K|^{1/2} b_{K,(L_j)}$ and used the decoupling inequality. 
Notice that 
$$
|\wt b_{K,(L_j)}| \le \frac{ \prod_{j=1}^{n-1} |L_j|^{1/2} |L_{n+1}|^{1/2}}{ |K|^{n-1}}.
$$

We see that we have reduced the estimate to the boundedness of an $(n-1)$-linear shift type operator as in \eqref{eq:NewShiftTypeOper}.
Now, we have two possibilities. If all the Haar functions $h'_{L_j}$ for $j \in \calJ_{n-1}$ are cancellative then we are in a position to apply Case 1 from above
to finish the estimate. 
If some of them is non-cancellative, then we dualize with a function $g \in L^{p(\calJ_{n-1})'}(Y(X_1, \dots, X_{n-1}))$.
This leads us to a corresponding situation as the beginning of Case 2 above but now the form is $n$-linear and  
the underlying $\UMD$ H\"older $n$-tuple is $\{X_1, \dots, X_{n-1}, Y(X_1, \dots, X_{n-1})\}$.
We see that we can repeat the argument in Case 2 until we can apply Case 1. This finishes the proof.

\end{proof}

\begin{rem}\label{rem:2VSn}
We discuss why Theorem \ref{thm:X} works without any $\UMD$ H\"older tuple assumptions on the spaces $X_1$, $X_2$ and $Y_3$, and why we can't allow more 
$\UMD$ spaces in Theorem \ref{thm:X}.
The key point is that for $e_{1,k} \in X_1$ and $e_2 \in X_2$ the estimate
\begin{equation}\label{eq:BilTrivEst}
\Big | \sum_{k=1}^K  e_{1,k}e_{2} \Big |_{Y_3}
\le \Big | \sum_{k=1}^K e_{1,k} \Big |_{X_1} |e_{2}|_{X_{2}},
\end{equation}
which corresponds to \eqref{eq:KeyIneq}, holds without any further assumptions on the spaces.
Using this kind of estimates one can prove Theorem \ref{thm:X} with similar techniques as in the proof of Theorem \ref{thm:MultiShifts}.

Suppose then we have $\UMD$ spaces $X_1, \dots, X_n$ and $Y_{n+1}$, where $n \ge 3$, and we have a product 
$X_1 \times \dots \times X_n \to Y_{n+1}$ -- a bounded $n$-linear operator.
 Of course, an estimate
corresponding to \eqref{eq:BilTrivEst} holds, namely
\begin{equation}\label{eq:nLinTrivEst}
\Big | \sum_{k=1}^K  e_{1,k} \prod_{j=2}^ne_{j} \Big |_{Y_{n+1}}
\le \Big | \sum_{k=1}^K e_{1,k} \Big |_{X_1} \prod_{j=2}^n|e_{j}|_{X_{j}}.
\end{equation}
However, 
in the above proof for shifts, when we use Stein's inequality, we need to reduce the linearity before we can use it again.
 That is why we need
the product structure of $\UMD$ H\"older tuples rather than just a product $X_1 \times \dots \times X_n \to Y_{n+1}$ on the top level. 
\end{rem}

\section{Boundedness of multilinear paraproducts in UMD H\"older tuples}\label{sec:ParaBdd}
In this section we prove the boundedness of multilinear paraproducts.
Let us first recall a result for paraproducts acting on $\UMD$-valued functions. 
If $X$ is a $\UMD$ space, $\calD$ is a dyadic lattice and $\{a_Q\}_{Q \in \calD}$ is a collection of scalars satisfying
the $\BMO$ condition \eqref{eq:BMOCondition}, then
\begin{equation}\label{eq:LinearUMDPara}
\Big \| \sum_{Q \in \calD} a_Q \langle f \rangle_Q h_Q \Big \|_{L^p(X)}
\lesssim \| f \|_{L^p(X)},
\end{equation}
where $p \in (1, \infty)$.
This result goes back to Bourgain, see Figiel--Wojtaszczyk  \cite{FigWoj}.
Another nice proof is obtained by adapting the argument of  H\"anninen--Hyt\"onen \cite{HH}, who
consider paraproducts with operator coefficients.

Let $n \ge 1$ and let $\{X_1, \dots, X_{n+1}\}$ be a $\UMD$ H\"older tuple. 
Suppose that $\calD$ is a dyadic lattice and that $\pi{\coloneqq} \pi_\calD$ is a paraproduct as described in Section
\ref{sec:MultiSing}. Let $j_0 \in \calJ_{n+1}$ be the index related to the cancellative Haar functions of $\pi$ and let
$\si \in \Sigma(n+1)$ be the cyclic permutation such that $\si(n+1)=j_0$.
We consider the $(n+1)$-linear form $\Lambda_\pi$  acting on functions $f_j \in L^\infty_c(X_j)$
by
\begin{equation}\label{eq:ParaForm}
\Lambda_\pi(f_1, \dots, f_{n+1})
= \sum_{Q \in \calD} a_Q \tau \left(\Big[\prod_{j=1}^n \langle f_{\si(j)} \rangle_Q\Big] \langle f_{\si(n+1)}, h_Q \rangle \right),
\end{equation}
where the scalars $\{a_Q\}_{Q \in \calD}$ satisfy the $\BMO$ condition \eqref{eq:BMOCondition}.
The following theorem combined with Lemma \ref{lem:SparseModel} 
proves the desired estimate. 

\begin{thm}\label{thm:MultiPara}
Suppose that $p_j \in (1, \infty)$ for $j \in \calJ_{n+1}$ are such that $\sum_{j=1}^{n+1}1/p_j=1$.
If  $f_j \in L^\infty_c( X_{j})$ for $j \in \calJ_{n+1}$ then the form $\Lambda_\pi$ from \eqref{eq:ParaForm} satisfies the estimate
$$
|\Lambda_\pi(f_1, \dots, f_{n+1})|
\lesssim \prod_{j=1}^{n+1} \| f_j \|_{L^{p_j}(X_j)}.
$$ 
\end{thm}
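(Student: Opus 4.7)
The argument proceeds by induction on the linearity $n$, following the blueprint of the proof of Theorem \ref{thm:MultiShifts}. For the base case $n=1$, the trace pairing between $X_1$ and $X_2=Y(X_1)$ combined with H\"older's inequality reduces $|\Lambda_\pi(f_1,f_2)|$ to $\|f_2\|_{L^{p_2}(X_2)}\|\sum_Q a_Q \langle f_1\rangle_Q h_Q\|_{L^{p_1}(X_1)}$, and the second factor is controlled by $\|f_1\|_{L^{p_1}(X_1)}$ via the linear $\UMD$-valued paraproduct estimate \eqref{eq:LinearUMDPara}. For the inductive step, we use the cyclic invariance of $\tau$ \eqref{e:commtrace} to reduce to $\sigma=\id$ and $j_0=n+1$; the trace duality between $X_{n+1}$ and $Y(X_{n+1})=Y(X_1,\dots,X_n)$ then reduces matters to showing
\[
\Big\|F\Big\|_{L^p(Y(X_{n+1}))} \lesssim \prod_{j=1}^n \|f_j\|_{L^{p_j}(X_j)}, \quad F := \sum_{Q\in\calD} a_Q \prod_{j=1}^n \langle f_j\rangle_Q h_Q,\ \tfrac{1}{p}=\sum_{j=1}^n\tfrac{1}{p_j}.
\]

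The core reduction mimics Case 2 of the proof of Theorem \ref{thm:MultiShifts}. First decouple the Haar martingale via \eqref{eq:DecEst} with $k=l=0$ (noting that $\Delta_Q F = a_Q \prod_j \langle f_j\rangle_Q h_Q$ by orthogonality of Haar projections), then for each fixed $y\in\calV$ introduce the auxiliary function $\varphi_{Q,y}(x) := a_Q \prod_{j=1}^{n-1}\langle f_j\rangle_Q h_Q(y_Q) f_n(x) 1_Q(x)$, which is supported in $Q$ and satisfies $\langle\varphi_{Q,y}\rangle_Q 1_Q = a_Q \prod_{j=1}^n \langle f_j\rangle_Q h_Q(y_Q) 1_Q$. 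Applying Stein's inequality \eqref{eq:SteinUMD} in the $\UMD$ space $Y(X_{n+1})$ replaces the average $\langle f_n\rangle_Q$ by $f_n(x)$, leaving a sum that factors as $f_n(x)\cdot B(x,y)$ with $B(x,y) := \sum_Q \varepsilon_Q 1_Q(x) a_Q \prod_{j=1}^{n-1}\langle f_j\rangle_Q h_Q(y_Q)$. The key product inequality \eqref{eq:KeyIneq}---valid in the UMD H\"older $n$-tuple $\{X_1,\dots,X_{n-1},Y(X_1,\dots,X_{n-1})\}$ furnished by property P2---yields the pointwise bound $|f_n(x)B(x,y)|_{Y(X_{n+1})}\le |f_n(x)|_{X_n}|B(x,y)|_{Y(Y(X_1,\dots,X_{n-1}))}$. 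H\"older in $x$ with exponents $p_n$ and $q$ ($1/q=\sum_{j=1}^{n-1}1/p_j$), together with Kahane--Khintchine \eqref{eq:KK} to commute $\E$ with the $L^r$ integrations, then gives
\[
\|F\|_{L^p(Y(X_{n+1}))} \lesssim \|f_n\|_{L^{p_n}(X_n)}\Big(\E\int\int_\calV |B(x,y)|^q_{Y(Y(X_1,\dots,X_{n-1}))}d\nu\,dx\Big)^{1/q}.
\]
Undoing \eqref{eq:DecEst} in the opposite direction, the last factor is comparable to the $L^q$-norm of the $(n-1)$-linear paraproduct $\sum_Q a_Q \prod_{j=1}^{n-1}\langle f_j\rangle_Q h_Q$ viewed as an operator in the UMD H\"older $n$-tuple $\{X_1,\dots,X_{n-1},Y(X_1,\dots,X_{n-1})\}$, so the inductive hypothesis furnishes the bound $\prod_{j=1}^{n-1}\|f_j\|_{L^{p_j}(X_j)}$ and closes the induction.

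\textbf{Main obstacle.} The principal technical point is ensuring that the chain decoupling $\to$ Stein $\to$ key product inequality $\to$ undecoupling is compatible with the $\UMD$ H\"older tuple structure, so that after peeling off $f_n$ the residual $(n-1)$-linear paraproduct naturally lives in a tuple to which the inductive hypothesis applies. Property P2 of $\UMD$ H\"older tuples, which guarantees that a sub-tuple augmented with its associated $Y$-space is again a $\UMD$ H\"older tuple, is indispensable for this step; the identification $Y(X_n,X_{n+1}) = Y(Y(X_1,\dots,X_{n-1}))$ via Lemma \ref{lem:YJ} then aligns the target spaces correctly between successive inductive levels.
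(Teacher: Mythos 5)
Your proposal is correct, and it proves the theorem by a route that is close in spirit to, but not the same as, the paper's. The paper's own paraproduct argument never invokes the decoupling estimate \eqref{eq:DecEst}: after reducing by trace duality to the $L^{p(\calJ_n)}(Y(X_{n+1}))$-norm of $\sum_Q a_Q\prod_{j\le n}\langle f_j\rangle_Q h_Q$, it inserts random signs directly via the $\UMD$ property of $Y(X_{n+1})$, replaces $h_Q$ by $|h_Q|=1_Q/|Q|^{1/2}$ using that $\{\varepsilon_Q h_Q(x)\}$ and $\{\varepsilon_Q|h_Q(x)|\}$ are identically distributed, applies Stein's inequality \eqref{eq:SteinUMD} and the key inequality \eqref{eq:KeyIneq}, and then simply observes that the resulting randomized expression is of the same shape with linearity $n-1$ in the tuple $\{X_1,\dots,X_{n-1},Y(X_1,\dots,X_{n-1})\}$, iterating in place (signs kept throughout) until the linear bound \eqref{eq:LinearUMDPara} finishes. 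You instead transplant Case 2 of the shift proof: decouple with $k=l=0$, apply Stein and \eqref{eq:KeyIneq}, peel off $f_n$ by H\"older (note that H\"older should be taken on the full product measure $\E\otimes \ud x\otimes \ud\nu$, which works precisely because $\nu$ and $\prob$ are probability measures and $|f_n(x)|_{X_n}$ does not depend on $(y,\omega)$), undo the decoupling in the $\UMD$ space $Y(Y(X_1,\dots,X_{n-1}))$, and close by a formal induction after re-dualizing against $L^{p(\calJ_{n-1})'}(Y(X_1,\dots,X_{n-1}))$. Both schemes are valid and yield the same constants structurally; the paper's is lighter (only $\UMD$ unconditionality and Stein are needed for paraproducts), while yours is more uniform with the shift argument at the cost of invoking the two-sided decoupling twice. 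Two small inaccuracies in your write-up, neither of which affects the argument: in the non-commutative algebra the summand factors as $B(x,y)f_n(x)$ with $f_n(x)$ on the right (matching the order in \eqref{eq:KeyIneq}), not $f_n(x)\cdot B(x,y)$; and the aside identifying $Y(X_n,X_{n+1})$ with $Y(Y(X_1,\dots,X_{n-1}))$ ``via Lemma \ref{lem:YJ}'' is neither what that lemma asserts nor needed -- the step you actually use is exactly \eqref{eq:KeyIneq}, which follows from Remark \ref{rem:ProdInL1} and property P2, as in the paper.
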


\begin{proof}
For $m \in \calJ_n$ we let $p(\calJ_m)$ be the exponent defined by
$1/p(\calJ_m)= \sum_{j=1}^m 1/p_j$.
For convenience of notation we may assume that $j_0=n+1$, so that $\si=\id$.
In this case we need to estimate the term
$$
\Big \| \sum_{Q \in \calD} a_Q\prod_{j=1}^{n} \langle f_{j} \rangle_Q h_Q \Big \|_{L^{p(\calJ_n)}({Y(}X_{n+1}))}.
$$
The case $n=1$ is the known estimate \eqref{eq:LinearUMDPara}. 
Therefore, we assume that $n \ge 2$. 

Applying the $\UMD$ property of ${Y(}X_{n+1})$  we are led to 
\begin{equation}\label{eq:ParaWithSigns}
\Big(\E \int_{\R^d} \Big| \sum_{Q \in \calD} \varepsilon_Q a_Q \prod_{j=1}^{n} \langle f_j \rangle_Q |h_Q(x)| \Big |_{{Y(}X_{n+1})}^{p(\calJ_{n})} \ud x\Big)^{1/p(\calJ_{n})},
\end{equation}
where to pass from $h_Q$ to $|h_Q|$ we used that for fixed $x \in \R^d$ the families $\{ \varepsilon_Q h_Q(x)\}$ and $\{ \varepsilon_Q |h_Q(x)|\}$ 
are identically distributed. Since $|h_Q|=1_Q/|Q|^{1/2}$, we can use Stein's inequality to have that
$$
\eqref{eq:ParaWithSigns} 
\lesssim \Big(\E \int_{\R^d} \Big| \sum_{Q \in \calD} \varepsilon_Q a_Q \prod_{j=1}^{n-1} \langle f_j \rangle_Q f_{n}(x) |h_Q(x)| 
\Big |_{{Y(}X_{n+1})}^{p(\calJ_{n})} \ud x \Big)^{1/p(\calJ_{n})}.
$$

Now, we use the same inequality we used in the shift proof, Equation \eqref{eq:KeyIneq}, and H\"older's inequality  
to have that the last term is less than $\| f_n \|_{L^{p_n}(X_n)}$ multiplied by 
$$
\Big(\E \int_{\R^d} \Big| \sum_{Q \in \calD} \varepsilon_Q a_Q \prod_{j=1}^{n-1} \langle f_j \rangle_Q |h_Q(x)| 
\Big |_{Y(Y(X_1, \dots, X_{n-1}))}^{p(\calJ_{n-1})} \ud x \Big)^{1/p(\calJ_{n-1})}.
$$
Since $\{X_1, \dots, X_{n-1}, Y(X_1, \dots, X_{n-1}\}$ is a $\UMD$ H\"older $n$- tuple,
we see that we have reduced to a situation as in \eqref{eq:ParaWithSigns} but now the degree of linearity is $n-1$.
We can repeat the argument until we end up with a linear operator, and then we apply \eqref{eq:LinearUMDPara}.
\end{proof}

\section{Maximality of    UMD H\"older tuples} \label{s:maxhtup}In this brief section, we show that   $\UMD$ H\"older tuples are in a suitable sense maximal for $L^p$-boundedness of  extensions of $n$-linear CZO operators and dyadic shifts via an associative product as in  Section \ref{sec:UMDHT}. The precise statement is in Proposition \ref{p:maximal} below.

Therefore, we fix an associative algebra $\mathcal A$ and a functional $\tau$ as in  Section \ref{sec:UMDHT}. 
We begin with a lemma.

\begin{lem} \label{lem:hold} Let   $(X_1,\ldots, X_{n})$ be a $n$-tuple of admissible spaces. If  $X_{n+1}$ is an admissible space such that for all $(n+1)$-linear shift forms \eqref{e:shiftforms} and  functions $f_j\in \mathcal C^1(\R^d) \otimes X_j$, $j=1,\ldots, n+1$
\begin{equation}
\label{e:YJnorm4}
\left|\Lambda_{U^k_{\calD_\omega,u}}(f_1,\ldots, f_{n}, f_{n+1}) \right|\lesssim  \left( \prod_{\ell=1}^{n+1} \|f_j\|_{L^{n+1}(\R^d;X_j)} \right) 
\end{equation}
with implicit constant depending  possibly on the complexity $k$, then \eqref{e:YJnorm3} holds for $m=n$, and in particular  $X_{n+1}\hookrightarrow {Y(}X_1,\ldots, X_{n})$.
\end{lem}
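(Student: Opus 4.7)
The plan is to test the shift-form boundedness hypothesis against a minimal family of one-cube, complexity-zero shifts, paired with rank-one, Haar-tuned test functions, engineered so that the left side of the hypothesis collapses to a single trace-of-products evaluation while the right side factors into the $X_j$-norms of chosen vectors, with a common power of $|Q_0|$ cancelling on the two sides. This directly yields \eqref{e:YJnorm3} for $m=n$, and then Lemma \ref{lem:max} upgrades it to the embedding $X_{n+1}\hookrightarrow Y(X_1,\dots,X_n)$.

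More concretely, I would fix an arbitrary $\sigma\in\Sigma(n)$, $e\in X_{n+1}$, $e_j\in X_j$ for $j\in\calJ_n$, a dyadic lattice $\calD$, and a cube $Q_0\in\calD$. I would then consider the complexity-zero shift form whose only nonzero coefficient is $a_{Q_0}=|Q_0|^{(1-n)/2}$ (the largest value permitted by the normalization), with cancellative $L^2$-normalized Haar functions $h_{Q_0}$ at positions $j_0=1$ and $j_1=2$, and the non-cancellative $h_{Q_0}^0=|Q_0|^{-1/2}1_{Q_0}$ at positions $j\in\{3,\dots,n+1\}$. The natural test tuple is $f_1=e\cdot h_{Q_0}$, $f_2=e_{\sigma(1)}\cdot h_{Q_0}$, and $f_j=e_{\sigma(j-1)}\cdot|Q_0|^{-1}1_{Q_0}$ for $j\in\{3,\dots,n+1\}$. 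A direct evaluation of the Haar pairings gives
\[
\Lambda_{S^0_\calD}(f_1,\dots,f_{n+1})=|Q_0|^{1-n}\,\tau\!\Big(e\cdot\prod_{\ell=1}^n e_{\sigma(\ell)}\Big),
\]
while the $L^{n+1}$ normalizations $\|h_{Q_0}\|_{L^{n+1}}=|Q_0|^{1/(n+1)-1/2}$ and $\||Q_0|^{-1}1_{Q_0}\|_{L^{n+1}}=|Q_0|^{1/(n+1)-1}$ similarly yield
\[
\prod_{j=1}^{n+1}\|f_j\|_{L^{n+1}(\R^d;X_j)}=|Q_0|^{1-n}\,|e|_{X_{n+1}}\prod_{\ell=1}^n|e_{\sigma(\ell)}|_{X_{\sigma(\ell)}}.
\]
Inserting these into the hypothesis and cancelling the common factor $|Q_0|^{1-n}$ produces \eqref{e:YJnorm3} for this $\sigma$; since $\sigma$ was arbitrary, Lemma \ref{lem:max} finishes the proof.

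I expect the main obstacle to be purely technical rather than structural: the test functions $h_{Q_0}$ and $1_{Q_0}$ are not in $\mathcal C^1$, whereas the hypothesis is stated on $\mathcal C^1(\R^d)\otimes X_j$. I would handle this via a routine density argument, using that the form $\Lambda_{S^0_\calD}$, read off from \eqref{e:shiftforms}, is linear and continuous in each $f_j\in L^{n+1}(\R^d;X_j)$, that the $L^{n+1}$ norms on the right are likewise continuous, and that $\mathcal C^1_c(\R^d)\otimes X_j$ is dense in $L^{n+1}(\R^d;X_j)$. The exact cancellation of the $|Q_0|^{1-n}$ scaling is by design: the coefficient $a_{Q_0}$ saturates its normalization, and the $L^{n+1}$ sizes of the test functions have been calibrated to match that of the $\wt h_{Q_0}$'s, so the cube's scale necessarily drops out of the final inequality.
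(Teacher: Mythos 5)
Your proposal is essentially the paper's proof: the paper's argument is precisely to test \eqref{e:YJnorm4} on a trivial (single-cube, complexity-zero) shift with rank-one, Haar-adapted test functions and then invoke Lemma \ref{lem:max}, and your scaling computations (the saturated coefficient $|Q_0|^{(1-n)/2}$, the cancellation of the common factor $|Q_0|^{1-n}$, and the routine $\mathcal C^1$-approximation of the Haar and indicator profiles, which is even easier than you suggest since with rank-one inputs the trace factor is a fixed scalar and only scalar pairings and norms need to pass to the limit) are correct. The only liberty you take --- placing the $X_{n+1}$-valued datum in the first slot so as to produce $\tau\big(e\,e_{\sigma(1)}\cdots e_{\sigma(n)}\big)$ for every $\sigma\in\Sigma(n)$, even though \eqref{e:YJnorm4} literally asks $f_1$ to be $X_1$-valued --- amounts to applying the hypothesis to the cyclically reordered tuple, which is the same implicit convention the paper itself adopts (cf.\ the remark following \eqref{e:shiftforms} that the identity ordering inside the trace is chosen only for notational convenience).
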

\begin{proof} Test \eqref{e:YJnorm4} on a suitable trivial shift and appeal to Lemma \ref{lem:max}.
\end{proof}
To make our maximality claim precise, we need an additional definition. We say that the tuple $
\{X_1,\ldots,X_{n+1}\}$  of admissible spaces  \emph{is an  $n$-linear shift extension} if \eqref{e:YJnorm4} holds for all $(n+1)$-linear shift forms \eqref{e:shiftforms}.
If in addition, whenever  $Z$ is an admissible space such that for some $j_0\in \mathcal J_{n+1}$ the tuple 
$\{X_1,\ldots X_{j_0-1},Z,X_{j_0+1},\ldots X_{n+1}\}$  is an  $n$-linear shift extension, it must be $Z\hookrightarrow X_{j_0}$, we say that   $
\{X_1,\ldots,X_{n+1}\}$     is a  \emph{maximal} $n$-linear shift extension.  
\begin{prop}\label{p:maximal} Let $\{X_1, \dots, X_{n+1}\}$ be a $\UMD$ H\"older tuple. Then
\begin{itemize}
\item $\{X_1, \dots, X_{n+1}\}$ is a maximal $n$-linear shift extension;
\item whenever $1\leq k \leq n-1$ and $\#\calJ=k$,  $\{X_{j}: j \in \calJ\}\cup \{{Y(}\{X_{j}: j \in \calJ\})\}$ is   a maximal  $k$-linear shift extension. 
\end{itemize}
\end{prop}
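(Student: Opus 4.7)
The plan is to assemble the proposition from two ingredients already available in the paper: on the one hand, the positive direction (that UMD Hölder tuples are shift extensions) follows from Theorem \ref{thm:MultiShifts}; on the other hand, the maximality direction is a quick application of Lemma \ref{lem:hold} together with property \textbf{P1} of Definition \ref{defn:productsys1}.

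First I would dispose of the shift extension property. The bound \eqref{e:YJnorm4} for a UMD Hölder tuple $\{X_1,\dots,X_{n+1}\}$ (with polynomial dependence on the complexity) is precisely Theorem \ref{thm:MultiShifts} applied with $p_1=\cdots=p_{n+1}=n+1$. For the sub-tuples $\{X_j:j\in\calJ\}\cup\{Y(\{X_j:j\in\calJ\})\}$ appearing in the second bullet, property \textbf{P2} guarantees that these are themselves UMD Hölder $(k+1)$-tuples, so Theorem \ref{thm:MultiShifts} applies to them as well. This gives the shift extension halves of both statements.

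For the maximality direction, fix $j_0\in\calJ_{n+1}$ and let $Z$ be an admissible space such that
\[
\{X_1,\dots,X_{j_0-1},Z,X_{j_0+1},\dots,X_{n+1}\}
\]
is an $n$-linear shift extension. The class of shift forms in \eqref{e:shiftforms} is invariant under permutation of the function arguments (shifts come in all possible configurations of cancellative and non-cancellative indices, and the cyclic invariance of $\tau$ on $\calL^1$ allows for cyclic reordering), so after relabeling we may apply Lemma \ref{lem:hold} to conclude
\[
Z\hookrightarrow Y\big(\{X_j:j\in\calJ_{n+1}\setminus\{j_0\}\}\big).
\]
By property \textbf{P1} the right-hand side equals $X_{j_0}$, and hence $Z\hookrightarrow X_{j_0}$. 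This proves the maximality of the full $(n+1)$-tuple, which is the first bullet. The second bullet follows immediately by applying the first bullet to the UMD Hölder $(k+1)$-tuple $\{X_j:j\in\calJ\}\cup\{Y(\{X_j:j\in\calJ\})\}$ furnished by \textbf{P2}.

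The only subtle point, and what I would identify as the main obstacle, is the permutation step needed to invoke Lemma \ref{lem:hold}: the lemma is formulated with the ``extra'' space in the last position, whereas $Z$ replaces the index $j_0$, which may be arbitrary. One must therefore argue that shift extension is a symmetric notion in the indices. This is straightforward once one observes that (i) the coefficients $a_{K,(Q_j)}$ in \eqref{e:shift} can be freely permuted among the $n+1$ slots subject to the same normalization, (ii) the cancellative/non-cancellative pattern can be placed in any two distinguished positions, and (iii) the trace identity \eqref{e:commtrace} cyclically reorders the product inside $\tau$ without affecting the value of $\Lambda_{U^k_{\calD_\omega,u}}$. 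With this symmetry in hand, the rest of the argument is bookkeeping.
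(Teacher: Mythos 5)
Your proposal is correct and follows essentially the same route as the paper: Theorem \ref{thm:MultiShifts} gives the shift extension property, Lemma \ref{lem:hold} together with property \textbf{P1} gives $Z\hookrightarrow X_{j_0}$, and the second bullet follows by applying the first to the $\UMD$ H\"older $(k+1)$-tuple furnished by \textbf{P2}. Your extra remarks on the permutation/relabeling step (which the paper leaves implicit, relying on the permutation supremum built into the definition \eqref{e:YJnorm2} and the cyclicity of $\tau$) are a harmless elaboration rather than a genuine deviation.
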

\begin{proof}
Theorem \ref{thm:MultiShifts} shows that if  $\{X_1, \dots, X_{n+1}\}$ is a $\UMD$ H\"older tuple, then it is an  $n$-linear shift extension. As $X_{j_0}={Y(}\{X_j: j \in \mathcal J_0\})$ by definition of $\UMD$ H\"older tuple, we learn from   Lemma \ref{lem:hold} that   $\{X_1, \dots, X_{n+1}\}$ is in fact   a maximal $n$-linear shift extension. This proves the first point.

 By the inductive definition of $\UMD$ H\"older tuple, for each  $1\leq k \leq n-1$ and $\#\calJ=k$,  $\{X_{j}: j \in \calJ\}\cup \{{Y(}\{X_{j}: j \in \calJ\})\}$ is a $\UMD$ H\"older tuple.  Then this tuple must be a maximal  $k$-linear shift extension because of the first point. The second point is also proved.
 \end{proof}

\appendix
\section{Iterated mixed-norm non-commutative $L^p$ spaces} \label{a:1}
  Let $\mathcal M$ be a von Neumann algebra equipped with a n.s.f.\ trace as described in Example \ref{ex:ncex}.
Recall in particular that    $\mathcal A=L^0(\mathcal M)$ is an  associative $*$-algebra endowed with a compatible complete metrizable topology, induced by the metric $d_{\mathcal A}$ of convergence in measure. For an integer $S\geq 1$, let 
$(M_s,\mu_s)$, $s=1,\ldots, S$,  be $\sigma$-finite  measure spaces and $(\Omega_S,\omega_S)$ the product measure space
\[\Omega_S=\prod_{s=1}^S M_s,\qquad \omega_S= \prod_{s=1}^S \mu_s.\]

 Let $\mathscr A_{0,S}$ be the vector space of \emph{simple functions} $f:\Omega_S\to \mathcal A$, namely 
\[
f (t)=\sum_{j=1}^{J } A_{j } \mathbf{1}_{E^{j}}(t), \qquad t=(t_1,\ldots, t_s)\in \Omega_S,
\]
with $A_{j}\in \mathcal A,$ $E^{j}  \subset \Omega_S$ with $\omega_S(E^j)<\infty$.  
 Then $\mathscr A_{0,S}$ is an associative algebra with respect to the pointwise product: for $f,g \in \mathscr{A}_{0,S}$, the function $fg$ defined by $(fg)(t)=f(t) g(t)$, where the latter is the strong product in $\mathcal A$, belongs to $ \mathscr A_{0,S}$.   We denote by 
\begin{equation}
\label{e:defA}
\mathscr{A}_S  {\coloneqq} \textrm{  closure of } \mathscr A_0  \textrm{ w.r.t.\    sequential } \ d_{\mathcal A}\textrm{-pointwise convergence}
\end{equation}
namely, $f \in \mathscr{A}_S $ if there exists a sequence $f_n \in \mathscr A_{0,S}$ such that
\[
\lim_{n} d_{\mathcal A} (f(t),f_n(t)) =0 \qquad a.e. \ \ t \in \Omega_S.
\]
Then $\mathscr A_S$, the class of strongly measurable $\mathcal A$-valued  functions on $\Omega_S$, is an associative  algebra with respect to the same product.  Furthermore, $\mathscr A_S$ is complete with respect to the topology of convergence in measure, namely  $f_n\to f$ if for all $\eps>0$
\[
\lim_{n} \mu\left(\left\{t\in \Omega_S: d_{\mathcal A} (f(t),f_n(t))>\eps\right\}\right) =0,
\]
and the product operation is continuous.
Note that the latter topology is also metrizable, proceeding in an analogous way to \cite[Proposition A.2.4]{HNVW1}. 

Recall that $\mathcal M$ is equipped with the n.s.f.\   trace $\tau$, which is a linear bounded functional on    $  L^1(\mathcal M)$.   Then the functional 
\[
\tau_S(f)  \coloneqq \int_{\Omega_S} \tau(f(t)) \, \mathrm{d} \omega_S(t)
\]
is linear and bounded on the      Bochner space $ L^1(\Omega_S,\omega_S; L^1(\mathcal M))$, which is a subspace of $\mathscr A_S$. With this definition, $\mathscr A_S$ is endowed with the trace $\tau_S$.
Under these assumptions, we have the following proposition.
\begin{prop} \label{p:a1} For a H\"older tuple $\{p_j^{0}:1\leq j\leq m\}$   as in \eqref{e:holder}, let
\[
X_{j}^0= L^{p_j^0}(\mathcal M).
\]
Let   $\{p_j^{s}:1\leq j\leq m\}$ be further H\"older tuples of exponents, for  $1\leq s\leq S$. Then the Banach subspaces of $\mathscr A_s$
\begin{equation}
\label{e:exumd2}
X_j^s = L^{p_j^{s}} (M_s,\mu_s;  X_j^{s-1} ), \qquad s=1,\ldots,S,
\end{equation}
are a $\UMD$ H\"older $m$-tuple.
\end{prop}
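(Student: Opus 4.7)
The plan is to induct on the number $S$ of outer measure layers, keeping the multilinearity $m$ fixed (although at each step I will invoke the argument for different values of $m$, because verifying property P2 of Definition \ref{defn:productsys1} requires running the argument on sub-tuples). The base case $S=0$ is exactly Example \ref{ex:ncex}, which shows that $\{L^{p_j^0}(\mathcal M)\}_{j=1}^m$ is a $\UMD$ H\"older $m$-tuple relative to $(\mathcal A,\tau)$ for any H\"older tuple of exponents.

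The technical core of the induction is the following identification, valid for any admissible tuple $\{Z_1,\dots,Z_k\}$ in an enveloping algebra $(\mathscr B,\nu)$ of the type considered in Section \ref{sec:UMDHT}, any $\sigma$-finite measure space $(N,\rho)$, and any exponents $p_1,\dots,p_k,q\in(1,\infty)$ with $\frac{1}{q}+\sum_{i=1}^k \frac{1}{p_i}=1$:
\begin{equation}\label{eq:keyident}
Y\bigl(L^{p_1}(N,\rho;Z_1),\dots,L^{p_k}(N,\rho;Z_k)\bigr) = L^{q}\bigl(N,\rho;Y(Z_1,\dots,Z_k)\bigr),
\end{equation}
with equal norms, inside the algebra of strongly $\rho$-measurable $\mathscr B$-valued functions on $N$ equipped with the trace $g\mapsto \int_N \nu(g(t))\,\mathrm{d}\rho(t)$. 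The inclusion "$\supseteq$" in \eqref{eq:keyident} is an immediate consequence of the pointwise H\"older estimate combined with the scalar H\"older inequality on $N$. The reverse inclusion "$\subseteq$" is the  delicate point: given $e\in\mathscr A$ satisfying the defining integrability of the left hand side, one uses duality $(L^{q'}(N,\rho;Z))^*=L^{q}(N,\rho;Z^*)$ --- valid because the inductive hypothesis forces $Z=Y(Z_1,\ldots,Z_k)$ to be $\UMD$, hence reflexive --- together with the admissibility identification $Y(Z_1,\dots,Z_k)=(Y(Z_1,\dots,Z_k)^*)^* $ provided by Remark \ref{rem:YJ} and Lemma \ref{lem:YJ} to produce the pointwise $Y(Z_1,\dots,Z_k)$-valued representative of $e$, whose $L^{q}(N,\rho)$-norm equals the left hand side of \eqref{eq:keyident} by a standard Fubini-type argument.

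Granting \eqref{eq:keyident} and the induction hypothesis at level $S-1$, the inductive step goes as follows. Admissibility of each $X_j^S$ follows by applying \eqref{eq:keyident} with $k=1$, using that $Y(X_j^{S-1})$ is admissible by the induction hypothesis; the $\UMD$ property of $X_j^S$ is preserved from $X_j^{S-1}$ under iterated Bochner extensions with $p_j^s\in(1,\infty)$, see \cite[Proposition 4.2.15]{HNVW1}. Property P1 is verified by applying \eqref{eq:keyident} with $k=m-1$: for each fixed $j_0\in \mathcal J_m$, the exponents $\{p_j^S:j\neq j_0\}$ give $\frac{1}{q^S}=1-\sum_{j\neq j_0}\frac{1}{p_j^S}=\frac{1}{p_{j_0}^S}$, and the inner space, by the induction hypothesis applied to property P1 at level $S-1$, is exactly $X_{j_0}^{S-1}$. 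Property P2 for $\mathcal J=\{j_1<\cdots<j_k\}$ with $1\leq k\leq m-2$ is verified by observing that \eqref{eq:keyident} identifies $Y(X_{j_1}^S,\dots,X_{j_k}^S)$ with an iterated Bochner space of the same form as \eqref{e:exumd2}, with the outer exponent determined by the H\"older relation and the inner space being $Y(X_{j_1}^{S-1},\dots,X_{j_k}^{S-1})$. The hypothesis that $\{X_{j_1}^{S-1},\dots,X_{j_k}^{S-1},Y(X_{j_1}^{S-1},\dots,X_{j_k}^{S-1})\}$ is a $\UMD$ H\"older $(k+1)$-tuple is then precisely the statement of the proposition at level $S-1$ applied to this reduced tuple of spaces.

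The main obstacle is the rigorous establishment of \eqref{eq:keyident}, in particular the "$\subseteq$" direction. One needs to argue that an element of $\mathscr A_S$ whose product with arbitrary elements of $L^{p_i}(N,\rho;Z_i)$ lies in $\mathcal L^1$ with uniformly controlled trace pairing is necessarily pointwise a.e. an element of $Y(Z_1,\dots,Z_k)$, with the correct integrability. This reduction to pointwise membership is the step where the reflexivity of $Y(Z_1,\dots,Z_k)$, itself a consequence of the $\UMD$ property supplied by the induction, is crucial; absent reflexivity the duality step would fail. Once \eqref{eq:keyident} is in place, the remaining bookkeeping is straightforward and the induction closes.
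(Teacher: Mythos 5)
Your overall strategy is the same as the paper's: induct on the number of iterated layers, with the technical core being the identification $Y\bigl(L^{p_1}(N,\rho;Z_1),\dots,L^{p_k}(N,\rho;Z_k)\bigr)=L^{q}\bigl(N,\rho;Y(Z_1,\dots,Z_k)\bigr)$, which is exactly the paper's \eqref{e:equality}; the easy inclusion is indeed just pointwise H\"older together with Lemma \ref{lem:max} (the paper's \eqref{e:n0}). The problem is that you never actually prove the hard inclusion, and the mechanism you sketch for it does not address the real difficulty. The seminorm \eqref{e:YJnorm2} only controls the pairings $\tau_S(e\,f_1\cdots f_k)$ against \emph{factorizable} test elements, i.e.\ pointwise products of $f_u\in L^{p_u}(N,\rho;Z_u)$. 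To run your duality step you must know that $e$ induces a bounded linear functional on the whole mixed-norm non-commutative space $X_{\calJ}^s$ (whose dual, via reflexivity/RNP, is the target $Y_{\calJ}^s$); that requires the bound $|\tau_S(ef)|\lesssim \|e\|\,\|f\|_{X_\calJ^s}$ for all $f$ in a dense subspace of $X_{\calJ}^s$, not merely for products. Reflexivity of $Y(Z_1,\dots,Z_k)$ gives you the duality statement but cannot upgrade boundedness on products to boundedness on the full space, so the duality argument does not even get started; the "standard Fubini-type argument" you invoke has nothing to act on.

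The missing ingredient is a Lozanovskii-type factorization: every positive simple function of unit norm in the mixed-norm non-commutative space $X_{\calJ}^s$ factors as $f=\prod_{u=1}^k f_u$ with $\|f_u\|_{X_{j_u}^s}=1$, so that testing against this dense, norming class is already controlled by the $Y$-seminorm. This is precisely the content of the paper's Lemmas \ref{l:auxlem1} and \ref{l:auxlem2}: the scalar mixed-norm layers are factored by explicit maps $B_u^s$ whose continuity properties \eqref{e:factor2} are needed to keep the factors strongly measurable through the iterated Bochner structure, and the non-commutative layer is factored by Borel functional calculus, $A=\prod_u A^{q_{\calJ}^0/p_u^0}$ for positive $A$, as in \eqref{e:dec0}. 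You explicitly flag the $\subseteq$ direction as "the main obstacle" but then attribute the difficulty to reflexivity, which is the routine part; without the factorization lemmas your induction does not close, so the proposal has a genuine gap at its central step.
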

Before the proof proper, we need to set some notation, and develop suitable auxiliary lemmata. For $1\leq k \leq m-1$, $\mathcal J=\{j_1<j_2<\cdots <j_k\}\subset \mathcal J_m$, and $0\leq s\leq S$ we write
\begin{equation}
\label{e:theexp}
  \frac{1}{q_{\calJ}^s}= \sum_{u=1}^k \frac{1}{p_{j_u}^s}, \qquad 
  \frac{1}{p_{\calJ}^s}=  1 -\frac{1}{q_{\calJ}^s}.
\end{equation}
It will be convenient to introduce the auxiliary mixed norm spaces
  \[
\begin{split} &
E_{j}^1 =  L^{p_{j}^1}(M_1,\mu_1), \, 
\\
& E_{j}^s = L^{p_{j}^s} (M_s,\mu_s;E_{j}^{s-1}), \qquad s=2,\ldots, S,
\end{split}\]
for $j=1,\ldots, m$ and similarly
   \[
\begin{split} &
E_{\calJ}^0 =  \mathbb C, \, 
\\
& E_{\calJ}^s = L^{q_{\mathcal J}^s} (M_s,\mu_s;E_{\calJ}^{s-1} ), \qquad s=1,\ldots, S.
\end{split}\]
In general we write $S(X)$ for the unit sphere in the Banach space $X$.  
\begin{lem} \label{l:auxlem1} Let $\mathcal J=\{j_u:1\leq u\leq k\}$. 
There exists  maps $B_u^{s}:S(E_{\calJ}^s)\to S(E_{j_u}^s)$  
such that    
\begin{equation}
\label{e:factor1}
f = \prod_{u=1}^k B_u^s(f)  \qquad \forall f\in S(E_{\calJ}^s)
\end{equation} 
and   
\begin{equation}
\label{e:factor2}\begin{split} 
&\|f_n -f \|_{E_{\calJ}^{s }}\to 0, \; \|f_n(t_s)-f(t_s)\|_{E_{\calJ}^{s-1}} \to 0 \; \mathrm{a.e.} \; t_s\in M_s  \implies
\\ &
\|B_u^s(f)  -B_u^s(f_n)  \|_{E_{j_u}^{s }}\to 0, \; \|B_u^s(f_n) (t_s)-B_u^s(f_n) (t_s)\|_{E_{j_u}^{s-1}} \to 0 \; \mathrm{a.e.} \; t_s\in M_s, \quad 1\leq u\leq k.
\end{split}
\end{equation}
\end{lem}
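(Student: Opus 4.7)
The plan is to construct the maps $B_u^s$ by induction on $s$, using the scalar H\"older factorization as the atomic step. For the base case $s=1$, where $E_\calJ^1 = L^{q_\calJ^1}(M_1,\mu_1)$ is scalar-valued, we set $B_1^1(f)(t_1) \coloneqq \operatorname{sgn}(f(t_1))|f(t_1)|^{q_\calJ^1/p_{j_1}^1}$ and $B_u^1(f)(t_1) \coloneqq |f(t_1)|^{q_\calJ^1/p_{j_u}^1}$ for $u\ge 2$, with $\operatorname{sgn}(0)\coloneqq 0$. Since $\sum_u 1/p_{j_u}^1 = 1/q_\calJ^1$ by definition, the identity $\prod_u B_u^1(f) = f$ and the norm equalities $\|B_u^1(f)\|_{L^{p_{j_u}^1}} = \|f\|_{L^{q_\calJ^1}}^{q_\calJ^1/p_{j_u}^1} = 1$ are immediate.

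For the inductive step $s\ge 2$, given $f \in S(E_\calJ^s) = S(L^{q_\calJ^s}(M_s,\mu_s; E_\calJ^{s-1}))$, we define the scalar profile $\phi(t_s) \coloneqq \|f(t_s)\|_{E_\calJ^{s-1}}$, which lies in $S(L^{q_\calJ^s}(M_s,\mu_s))$, and apply the $s=1$ factorization to $\phi$, producing positive factors $\psi_u(t_s) \coloneqq \phi(t_s)^{q_\calJ^s/p_{j_u}^s}$. On $\{\phi>0\}$ the renormalization $\hat f(t_s) \coloneqq f(t_s)/\phi(t_s)$ lies in $S(E_\calJ^{s-1})$, and we set
\[
B_u^s(f)(t_s) \coloneqq \psi_u(t_s)\, B_u^{s-1}(\hat f(t_s)),
\]
extended by zero on $\{\phi=0\}$. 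The factorization $\prod_u B_u^s(f)(t_s) = \phi(t_s) \hat f(t_s) = f(t_s)$ follows from the inductive identity, and Fubini combined with $\|B_u^{s-1}(\hat f(t_s))\|_{E_{j_u}^{s-1}} = 1$ yields $\|B_u^s(f)\|_{E_{j_u}^s}^{p_{j_u}^s} = \int \phi^{q_\calJ^s}\,\mathrm{d}\mu_s = 1$.

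For the continuity property \eqref{e:factor2}, the reverse triangle inequality gives $\phi_n \to \phi$ a.e.\ and in $L^{q_\calJ^s}(M_s)$, hence $\psi_{u,n} \to \psi_u$ a.e.\ and in $L^{p_{j_u}^s}(M_s)$ by continuity of the power map between scalar $L^p$ spaces. On $\{\phi > 0\}$ the ratios $\hat f_n(t_s) \to \hat f(t_s)$ in $E_\calJ^{s-1}$, so the inductive hypothesis \eqref{e:factor2} applied to $B_u^{s-1}$ gives $B_u^{s-1}(\hat f_n(t_s)) \to B_u^{s-1}(\hat f(t_s))$ in $E_{j_u}^{s-1}$. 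On $\{\phi = 0\}$ the prefactor $\psi_{u,n}(t_s)$ tends to $0$ while $B_u^{s-1}(\hat f_n(t_s))$ has unit $E_{j_u}^{s-1}$-norm, so $B_u^s(f_n)(t_s) \to 0 = B_u^s(f)(t_s)$ in $E_{j_u}^{s-1}$ either way. Since $\|B_u^s(f_n)(t_s)\|_{E_{j_u}^{s-1}} = \psi_{u,n}(t_s)$ converges to $\psi_u(t_s)$ in $L^{p_{j_u}^s}(M_s)$, Vitali's convergence theorem upgrades the a.e.\ convergence to convergence in $E_{j_u}^s$.

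The main obstacle will be the careful treatment of the null set $\{\phi = 0\}$, where the direct normalization $f/\phi$ is not defined but the product $\psi_u \cdot B_u^{s-1}(\hat f)$ must still depend continuously on $f$. The extension by zero across this set, combined with the vanishing of the outer prefactor $\psi_u$, resolves the ambiguity; the Vitali upgrade at each iterative layer is then secured by the matching $L^{p_{j_u}^s}$-convergence of the scalar norms $\psi_{u,n}$.
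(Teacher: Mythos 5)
Your construction is essentially the paper's own proof: induction on $s$, writing $f(t_s)=\phi(t_s)\hat f(t_s)$ with $\phi(t_s)=\|f(t_s)\|_{E_\calJ^{s-1}}$, distributing the power $\phi^{q_\calJ^s/p_{j_u}^s}$ among the factors, applying the level-$(s-1)$ factorization to the unit-sphere direction, and upgrading the a.e.\ pointwise convergence to convergence in $E_{j_u}^s$ by a generalized dominated convergence/Vitali argument, exactly as the paper does via its two-term splitting and the Riesz-type variant of dominated convergence. The only cosmetic differences are your explicit handling of the phase in the base case $s=1$ and of the null set $\{\phi=0\}$ (both implicit in the paper), so the proposal matches the paper's argument.
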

\begin{proof} We deal with the case $j_u=u,u=1,\ldots, k$ which is generic.  We prove the statement by induction on $s$. 
 If  $s\geq 2$, assume inductively that  maps  $B_u^{s-1}$ as in the statement have been constructed; for the base case $s=1$, we run the argument below with $B^{0}_u$ the identity map.  In both cases, we need to define    $B^s_u:S(E_{\calJ}^s)\to S(E_{u}^s)$. We use that each $f\in S(E_{\calJ}^s) $ is $E_{\calJ}^{s-1}$-valued. So for each  $t_s\in M_s$, write 
\[
f (t_s ) = |f(t_s )|_{ E_{\calJ}^{s-1}} g(t_s) = \prod_{u=1}^k \left( |f (t_s ) |_{E_{\calJ}^{s-1}}^{\frac{q_{\calJ}^s }{p_u^s}} g_{u}(t_s) \right)\eqqcolon \prod_{u=1}^k B^s_u(f)(t_s )
\]
where $g$ is $S( E_{\mathcal J}^{s-1})$-valued, so that each $g_u = B_u^{s-1}(g)$ is $S(E_{u}^{s-1})$-valued. Notice that each $f_{u}=B^s_u(f)$ is  (strongly) $\mu_s$-measurable with values in $E_{u}^{s-1 }$: in fact $|f (\cdot ) |_{E_{\calJ}^{s-1}}$ is $\mu_s$-measurable   and each $g_u$ is   $\mu_s$-measurable, as  $B_u^{s-1}$ is (norm) continuous from $E_\calJ^{s-1} \to E_{u}^{s-1 }$ and $g$ is $\mu_s$-measurable with values in $E_\calJ^{s-1}$ . A direct calculation reveals that
\[
\|f_{u}\|_{E_{u}^s}=1, \qquad 1\leq u\leq k.
\]
It remains to show that the thus defined maps  $  B_u^s $ are continuous in the sense of \eqref{e:factor2} by assuming the same properties hold  for the maps $  B_u^{s-1} $. Let $f_n, f$ be as in the first line of \eqref{e:factor2} and 
  write $f_n(t_s)=|f_n(t_s)|_{E_{\calJ}^{s-1}} g_n(t_s)$. We first show the pointwise convergence:  for each we have
\[
\begin{split}
  \|{B}^s_u(f)(t_s)-B^s_u(f_n)(t_s)\|_{E_{u}^{s-1}} & \leq  \|f(t_s)\|_{E_{\calJ}^{s-1}}\|{B}^{s-1}_u(g(t_s))-B^{s-1}_u(g_n(t_s))\|_{E_{u}^{s-1}}\\ & + \quad
\|B^{s-1}_u(g_n(t_s))\|_{E_{u}^{s-1}}   \big|\|f(t_s)\|_{E_{\calJ}^{s-1}}^{\frac{q_{\calJ}^s}{p_u^s}} - \|f_n(t_s)\|_{E_{\calJ}^{s-1}}^{\frac{q_{\calJ}^s}{p_u^s}}  \big|
 \end{split}\]
  Relying on the norm continuity of ${B}^{s-1}_u$  we obtain that both summands in the previous display converge to zero for each $t_s$
such that $\|f_n(t_s)\|_{E_{\calJ}^{s-1}} \to \|f(t_s)\|_{E_{\calJ}^{s-1}}, \|g_n(t_s)- g(t_s)\|_{E_{\calJ}^{s-1}}\to 0$; this is a set of full $\mu_s$ measure, so that this part of the proof is complete. We come to the norm continuity in \eqref{e:factor2}. We have 
\[
\begin{split} 
 \| {B}^s_u(f)-B^s_u(f_n)\|_{E_{u}^s}^{p_u^s}&\lesssim \int_{M_s} |f(t_s) |_{E_{\calJ}^{s-1}}^{q_{\calJ}^s} |B_u^{s-1}(g(t_s))- B_u^{s-1}(g_n(t_s))|^{p_u^s}_{E_{u}^{s-1}}\, \mathrm{d} \mu_s(t_s) \\ &\quad  +  \int_{M_s} \left| |f(t_s)|_{E_{\calJ}^{s-1}}^{\frac{q_{\calJ}^s}{p_u^s}}-|f_n(t_s)|_{E_{\calJ}^{s-1}}^{\frac{q_{\calJ}^s}{p_u^s}} \right|^{p_u^s}|B_u^{s-1}(g_n(t_s))|^{p_u^s}_{E_{u}^{s-1}}\, \mathrm{d} \mu_s(t_s)
\end{split}\]
The first integrand converges to zero pointwise a.e. and is dominated by $|f(t_s) |_{E_{\calJ}^{s-1}}^{q_{\calJ}^s}$, so the integral converges to zero by dominated convergence. The second integral is equal to\[
 \|F -F_n \|^{p_u^s}_{L^{p_u^s}(M_s,\mu_s)}, \qquad F(t_s)=|f (t_s)|_{E_{\calJ}^{s-1}}^{\frac{q_{\calJ}^s}{p_u^s}}, \qquad F_n(t_s)=|f_n(t_s)|_{E_{\calJ}^{s-1}}^{\frac{q_{\calJ}^s}{p_u^s}}.
\]
Notice that $\|F \|_{p_u^s}= \|f\|_{E_{\calJ}^{s}}^{{q_\calJ^s}/{p_u^s}}$, $\|F_n \|_{p_u}= \|f_n\|_{E_{\calJ}^{s}}^{{q_\calJ^s}/{p_u^s}}$.
{ As $F_n\to F$ pointwise}, $F_n,F\in L^{p_u^s}(M_s,\mu_s) $ and $ \|F_n \|_{p_u^s}\to  \|F  \|_{p_u^s}$, then $\|F -F_n \|_{p_u^s}$ converges to zero by a well-known variation of the proof of the $L^p$ dominated convergence theorem.
\end{proof}

\begin{lem} \label{l:auxlem2} Let
\footnote{Recall that $L^{q_{\mathcal J}^0}(\mathcal M)_+$ denotes the positive cone of $L^{q_{\mathcal J}^0}(\mathcal M),$ namely the positive operators in $L^{q_{\mathcal J}^0}(\mathcal M)$.}
\[
\begin{split} &
X_{\calJ}^0 =  L^{q_{\mathcal J}^0}(\mathcal M),  \qquad X_{\calJ,+}^0= L^{q_{\mathcal J}^0}(\mathcal M)_+,
\\
& X_{\calJ}^s = L^{q_{\mathcal J}^s} (M_s,\mu_s;X_{\calJ}^{s-1}),\qquad X_{\calJ,+}^s = L^{q_{\mathcal J}^s} (M_s,\mu_s;X_{\calJ,+}^{s-1}), \qquad s=1,\ldots, S.
\end{split}\]
Let $f\in X_{\calJ,+}^s  $ be a simple function with  $\|f\|_{X_{\calJ}^s}=1$. Then there exist $f_u \in X_{j_u,+}^s$,
 $u=1,\ldots, k$ with
\begin{equation}
\label{e:dec-1}
f=\prod_{u=1}^k f_u,  
\qquad
\|f_u\|_{X_{j_u}^S}=1.
\end{equation}
\end{lem}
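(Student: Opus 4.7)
I plan to proceed by induction on $s \in \{0, 1, \ldots, S\}$, treating the non-commutative factorization only at the base level $s=0$ and reducing every higher level to a purely scalar H\"older factorization performed fiberwise on $M_s$.

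For the base case $s=0$, the goal is to factor a positive $f \in L^{q^0_\calJ}(\mathcal M)$ with $\|f\|_{L^{q^0_\calJ}} = 1$. I would use the Borel functional calculus for positive self-adjoint elements of $L^0(\mathcal M)$ to define
\[
f_u \coloneqq f^{q^0_\calJ / p^0_{j_u}} \in L^{p^0_{j_u}}(\mathcal M)_+, \qquad u = 1,\ldots, k.
\]
These elements are mutually commuting (they are functions of the single operator $f$), so using $\sum_u (p^0_{j_u})^{-1} = (q^0_\calJ)^{-1}$ the product telescopes, $\prod_u f_u = f^{\sum_u q^0_\calJ/p^0_{j_u}} = f$, and the trace identity $\tau(f_u^{p^0_{j_u}}) = \tau(f^{q^0_\calJ}) = 1$ gives the requested individual normalization.

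For the inductive step $s-1 \Rightarrow s$, I would write a simple $f \in X^s_{\calJ, +}$ of unit norm as $f(t_s) = \sum_j B_j \, \mathbf{1}_{F_j}(t_s)$, where $\{F_j\}$ is a finite partition of $M_s$ into sets of finite $\mu_s$-measure and each $B_j \in X^{s-1}_{\calJ, +}$ is simple, so that the unit norm condition reads $\sum_j \mu_s(F_j) \|B_j\|_{X^{s-1}_\calJ}^{q^s_\calJ} = 1$. Applying the inductive hypothesis to the normalized slices $\widehat{B}_j \coloneqq B_j/\|B_j\|_{X^{s-1}_\calJ}$ produces factorizations $\widehat{B}_j = \prod_u B_{j,u}$ with $B_{j,u} \in X^{s-1}_{j_u,+}$ of unit norm. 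I would then set
\[
f_u(t_s) \coloneqq \sum_j \|B_j\|_{X^{s-1}_\calJ}^{q^s_\calJ/p^s_{j_u}} \, B_{j,u} \, \mathbf{1}_{F_j}(t_s).
\]
Because the scalar weights commute with every $B_{j,u}$ and the indicators $\{\mathbf{1}_{F_j}\}$ are orthogonal, the expansion of $\prod_u f_u(t_s)$ collapses to the diagonal index, yielding $f(t_s)$ by means of $\sum_u q^s_\calJ/p^s_{j_u} = 1$. A one-line Fubini calculation gives $\|f_u\|_{X^s_{j_u}}^{p^s_{j_u}} = \sum_j \mu_s(F_j)\|B_j\|_{X^{s-1}_\calJ}^{q^s_\calJ} = 1$.

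The hard part lies entirely in the base case, where the non-commutativity forces us to invoke functional calculus and the spectral representation of the trace to justify that $f^{q^0_\calJ/p^0_{j_u}}$ belongs to $L^{p^0_{j_u}}(\mathcal M)$ with the correct norm; all higher levels are scalar H\"older redistributions of the $\mu_s$-measurable weights $\|B_j\|_{X^{s-1}_\calJ}^{q^s_\calJ/p^s_{j_u}}$ and inherit positivity trivially from the $B_{j,u}$. A minor bookkeeping point I would address before starting the induction is that any simple element of $X^s_{\calJ,+}$ can, after refining its defining partition of $\Omega_S$ and reindexing in the $t_s$-variable, be put into the product form $\sum_j B_j \mathbf{1}_{F_j}(t_s)$ with simple $B_j \in X^{s-1}_{\calJ,+}$ used in the inductive step.
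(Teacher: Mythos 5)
Your base case coincides with the paper's: the factorization $f_u=f^{q^0_\calJ/p^0_{j_u}}$ by Borel functional calculus, with the trace identity giving the normalization, and your scalar redistribution of the weights $\|B_j\|_{X^{s-1}_\calJ}^{q^s_\calJ/p^s_{j_u}}$ at higher levels is arithmetically correct. The genuine gap is the step you dismiss as ``minor bookkeeping''. In this appendix a simple element of $X^s_{\calJ,+}$ is a finite sum $\sum_j A_j\mathbf{1}_{E^j}$ with $A_j\in X^0_{\calJ,+}$ and $E^j$ \emph{arbitrary} finite-measure measurable subsets of the product $\Omega_s$ (this is exactly the class for which the lemma is invoked in the proof of Proposition \ref{p:a1}). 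Such an $f$ cannot in general be rewritten, after any finite refinement, in your product form $\sum_j B_j\mathbf{1}_{F_j}(t_s)$ with $F_j\subset M_s$ and \emph{simple} $B_j\in X^{s-1}_{\calJ,+}$: already for $s=2$, $\mathcal M=\C$, $M_1=M_2=[0,1]$ and $f=\mathbf{1}_{\{t_1<t_2\}}$, the slices $f(\cdot,t_2)=\mathbf{1}_{[0,t_2)}$ take uncountably many distinct values in $X^1_\calJ$, while your form allows only finitely many slice values. So your induction proves the lemma only for tensor-form simple functions. To handle a general simple $f$ you would have to apply the inductive hypothesis to each slice $f(t_s,\cdot)$ separately, for uncountably many $t_s$, and then show that the resulting factors depend measurably on $t_s$ -- and this measurability is precisely the nontrivial content here, which your argument never addresses.

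The paper's route is designed around that obstacle: it factors $f(t)=F(t)A(t)$ with $F(t)=|f(t)|_{X^0_\calJ}$ a scalar function on all of $\Omega_s$ and $A$ a pointwise-normalized, still \emph{simple}, $X^0_{\calJ,+}$-valued function. The operator part is then factored pointwise by your functional-calculus step (yielding simple, hence measurable, factors), while the scalar part $F$ is factored by Lemma \ref{l:auxlem1}, whose maps $B^s_u$ are constructed so that the factors of a measurable function are again measurable (this is what the continuity property \eqref{e:factor2} is for). If you wish to keep your fiberwise scheme, you must either restrict the lemma to rectangle-based simple functions and separately verify that their span is still dense in $X^s_\calJ$ (which is what the application in Proposition \ref{p:a1} would then require), or supply a measurability/continuity argument for the slicewise factorization that is essentially equivalent to Lemma \ref{l:auxlem1}.
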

\begin{proof}Again we deal with the generic  case $j_u=u,u=1,\ldots, k$. First of all, we make a remark about the case $s=0$. Fix $A\in X_{\calJ,+}^0$ with $\|A\|_{X_\calJ^0}=1$. 
Using the Borel functional calculus for  positive closed densely defined operators to define $A^\theta $ for $\theta>0$
\begin{equation}
\label{e:dec0}
A= \prod_{u=1}^k B_u(A), \qquad    B_u(A)= A^{\frac{q_\calJ^0}{p_{u}^0}}, \quad u=1,\ldots,k.
\end{equation}
Trivially   
\[
 \|B_u(A)\|_{X_u^0}=\|A\|^{\frac{q_\calJ^0}{p_{u}^0}}_{X_\calJ^0}=1, \quad u=1,\ldots,k.
\]
We now prove the main  statement.  Let $f\in X_{\calJ,+}^s  $ be a simple function with  $\|f\|_{X_{\calJ}^s}=1$. We factor
\[
f(t) = F(t) A(t), \qquad F(t)= |f(t)|_{X_\calJ^0}, \qquad t \in \Omega_s.
\]
Notice that $F\in E_{\mathcal J}^s$ of unit norm, so that using Lemma \ref{l:auxlem1}
\[
F= \prod_{u=1}^k B_u^s(F) , \qquad  \|B_u^s(F)\|_{E_{u}^s}=1,
\]
and we may write, also using \eqref{e:dec0}
\[
f= \prod_{u=1}^k f_u, \qquad f_u(t)=B_u^s(F)(t) B_u(A(t)),
\]
Notice that each $f_u$ is  strongly measurable as $B_u(A(\cdot))$ is a simple $X_{u,+}^0$-valued function and $B_u^s(F)$ is a measurable function in $E_u^s$. Also as $|B_u(A(t))|_{X_u^0}=1$ for all $t\in \Omega_s$
\[
\|f_u\|_{X_u^s} = \|B_u^s(F)\|_{E_{u}^s}=1,
\]
which completes the proof of the claim.
\end{proof}
We turn to the proof of the proposition. Namely we need to show that the tuple $X_j^s$ from \eqref{e:exumd2} is a $\UMD$ H\"older tuple for each $s=1,\ldots, S$.
 In proving this, by virtue of the case $s=0$ being already established in Example \ref{ex:ncex} we may argue inductively and assume the claim has been proved in the cases of  $0,\ldots, s-1$.

 Clearly each $  X_j^s$ is a subspace of $ \mathscr A_s$.  
Denoting by $q_j^s, s=0,\ldots, S$ the conjugate exponent  of $p_j^s$, it is convenient to define the spaces
\[\begin{split}
&Y_{j}^0 =  L^{q_{j}^0}(\mathcal M), \, 
\\
& Y_{j}^s = L^{q_{j}^{s}} (M_s,\mu_s;Y_{j}^{s-1} ), \qquad s=1,\ldots, S.
\end{split}
\]
which are Banach subspaces of $ \mathscr A_s$. 
Further, as  each $X_j^s$  is a reflexive Banach space and enjoys the Radon-Nikod\'ym property \cite[Theorem 1.3.21]{HNVW1}, an inductive argument yields the Riesz representation theorem  (cf.  \cite[Theorem 1.3.10]{HNVW1}) then yields that
\begin{equation}
\label{e:factabove}
\left(X_j^s\right)^*=Y_j^s, \qquad 1\leq j\leq m
\end{equation}
through the identification
\[
\lambda \in (X_j^s)^* \mapsto g_\lambda  \in Y_j^s  \qquad 
\lambda(f) = \tau_s( g_\lambda f) , \quad f   \in  X_j^s.
\]
We have in particular shown that each $X_j^s$ is an admissible space for the algebra $\mathscr A_s$ with trace $\tau_s$ and $Y(X_j^s)=Y_j^s$ .

We verify that $\{ X_j^s:j \in \mathcal J_m\}$ is a $\UMD$ H\"older tuple by induction on $m$. The case $m=2$ is actually immediate by virtue of the observation  and the well known fact that each $X_j^s,Y_j^s$ is a $\UMD$ space.

To obtain the inductive step, we fix $m\geq 3$ and  verify the following equality.  For each $1\leq k \leq m-1$, $\mathcal J=\{j_1<j_2<\cdots <j_k\}\subset \mathcal J_m$, there holds
\begin{equation}
\label{e:equality} Y( \{ X_{j}^s:j\in \mathcal J\}) \textrm{ is isometrically isomorphic to }
  \left( X_{\calJ}^s\right)^* ,
\end{equation}
where we refer to the spaces defined in Lemma \ref{l:auxlem2}. More explicitly, denoting
\[
\begin{split} &
Y_{\calJ}^0 =  L^{p_{\mathcal J}^0}(\mathcal M),
\\
& Y_{\calJ}^s = L^{p_{\mathcal J}^s} (M_s,\mu_s;Y_{\calJ}^{s-1}),  \qquad s=1,\ldots, S,
\end{split}\] 
we have $Y(\{ X_{j}^s:j\in \mathcal J\})=Y_{\calJ}^s= \left( X_{\calJ}^s\right)^*   $.

 Property  P1 then corresponds to this equality in the cases $k= m-1$. Verifying property P2   amounts to checking that when $k<m-1$, the tuple   $\{  X^s_j:j\in \calJ\} \cup \{   Y^s_{\calJ}\} $ is a  $\UMD$ H\"older $(k+1)$-tuple. As $k< m-1$, $\{  X_j^s:j\in \calJ\} \cup \{   Y^s_{\calJ}\} $ is a $\UMD$ H\"older $(k+1)$-tuple   and the exponents
$\{p_j^s: j \in \mathcal J, p^s({\calJ})\}$ are a H\"older tuple, this check is made by a straightforward  appeal to the induction assumption.

We are left with proving \eqref{e:equality}. To do this we will define a linear surjective isometry $\Phi:Y( \{ X_{j}^s:j\in \mathcal J\})\to Y_{\mathcal J}^{s } $. First of all note that 
\begin{equation}
\label{e:n0}
\|g\|_{ Y( \{ X_{j}^s:j\in \mathcal J\})} \leq 
\|g\|_{L^{p_\calJ^s}(M_s,\mu_s; Y_{\mathcal J}^{s-1})} = \|g\|_{  Y_{\mathcal J}^{s }}
\end{equation}
descends immediately  from H\"older's inequality in $L^{p}(M_s,\mu_s)$-spaces and Lemma \ref{lem:max} applied to the $\UMD$ H\"older tuple $X_{j_1}^{s-1}, X_{j_2}^{s-1},\ldots,  X_{j_k}^{s-1}$. 
We will use this below.

Fix then  $g\in Y( \{ X_{j}^s:j\in \mathcal J\}) $. We claim that  if $f$ is a simple $X_{\calJ,+}^0$-valued function on $\Omega_s$ with  $\|f\|_{X_{\calJ}^s}=1$, then 
\begin{equation}
\label{e:n01}
|\tau_s(g f)|\leq \|g\|_{ Y( \{ X_{j}^s:j\in \mathcal J\})} .
\end{equation}
Indeed, applying Lemma \ref{lem:max} we obtain 
\[
\left|\tau_s(g f)\right| = \left|\tau_s\left(g \prod_{u=1}^k f_u\right)\right| \leq \|g\|_{ Y( \{ X_{j}^s:j\in \mathcal J\})} \prod_{u=1}^k \|f_u\|_{X_{j_u}^s},  
\qquad
\|f_u\|_{X_{j_u}^s} = 1, \quad u=1,\ldots, k,
\]
which is \eqref{e:n01}.
As $X_\calJ^s$ is the $ X_\calJ^s$-norm closure of the linear span of   simple $X_{\calJ,+}^0$-valued function on $\Omega_s$, the linear bounded functional  $f\mapsto\tau_s(g f)$ extends uniquely to an element $\Phi(g) $ of $(X_\calJ^s)^*\equiv Y_\calJ^s$ with \[
 \|\Phi(g)\|_{ Y_\calJ^s}
 \leq  \|g\|_{ Y( \{ X_{j}^s:j\in \mathcal J\})}.
 \]
 It is easy to see that the map $\Phi:Y( \{ X_{j}^s:j\in \mathcal J\})\to Y_{\mathcal J}^{s } $ is linear. From \eqref{e:n0} we gather that if $g \in Y_{\mathcal J}^{s }$ then  $\Phi(g)$ is well-defined. In this case the linear bounded functionals $g\mapsto\tau_s(g f)$  and $\Phi(g)$ coincide on a dense set, it must be $\Phi(g)=g$. So $\Phi$ is obviously surjective. Furthermore using  \eqref{e:n0} again we obtain 
\[
 \|\Phi(g)\|_{ Y_\calJ^s} \geq  \|\Phi(g)\|_{ Y( \{ X_{j}^s:j\in \mathcal J\})} = \|g\|_{ Y( \{ X_{j}^s:j\in \mathcal J\})} \geq 
 \|\Phi(g)\|_{ Y_\calJ^s}
\] whence equality must hold throughout. So $\Phi$ is a linear isometric isomorphism and the proof of \eqref{e:equality} is complete.
\bibliography{OP_Multilin}
\bibliographystyle{amsplain}

\end{document}